\newtheorem{thm}{Theorem} [section]
\newtheorem{lem}[thm]{Lemma}
\newtheorem{cor}[thm]{Corollary}
\newtheorem{prop}[thm]{Proposition}
\newtheorem*{basic assumption}{Basic Assumption}
\theoremstyle{definition}
\newtheorem*{principal example}{Main Example}
\newtheorem{defn}[thm]{Definition}
\newtheorem{example}[thm]{Example}
\theoremstyle{remark}
\newtheorem{rem}[thm]{Remark}
\newtheorem{claim}[thm]{Claim}
\begin{document}

\numberwithin{equation}{section}

\newcommand{\hs}{\mbox{\hspace{.4em}}}
\newcommand{\bd}{\begin{displaymath}}
\newcommand{\ed}{\end{displaymath}}
\newcommand{\bcd}{\begin{CD}}
\newcommand{\ecd}{\end{CD}}

\newcommand{\proj}{\operatorname{Proj}}
\newcommand{\bproj}{\underline{\operatorname{Proj}}}
\newcommand{\spec}{\operatorname{Spec}}
\newcommand{\bspec}{\underline{\operatorname{Spec}}}
\newcommand{\pline}{{\mathbf P} ^1}
\newcommand{\pplane}{{\mathbf P}^2}
\newcommand{\coker}{{\operatorname{coker}}}
\newcommand{\ldb}{[[}
\newcommand{\rdb}{]]}

\newcommand{\Sym}{\operatorname{Sym}^{\bullet}}
\newcommand{\Symp}{\operatorname{Sym}}
\newcommand{\Pic}{\operatorname{Pic}}
\newcommand{\AAut}{\operatorname{Aut}}
\newcommand{\PAut}{\operatorname{PAut}}

\newcommand{\too}{\twoheadrightarrow}
\newcommand{\C}{{\mathbb C}}
\newcommand{\cA}{{\mathcal A}}
\newcommand{\cS}{{\mathcal S}}
\newcommand{\cV}{{\mathcal V}}
\newcommand{\cM}{{\mathcal M}}
\newcommand{\bA}{{\mathbf A}}
\newcommand{\cB}{{\mathcal B}}
\newcommand{\cC}{{\mathcal C}}
\newcommand{\cD}{{\mathcal D}}
\newcommand{\D}{{\mathcal D}}
\newcommand{\boldc}{{\mathbf C}}
\newcommand{\cE}{{\mathcal E}}
\newcommand{\cF}{{\mathcal F}}
\newcommand{\cG}{{\mathcal G}}
\newcommand{\G}{{\mathbf G}}

\newcommand{\bH}{{\mathbf H}}
\newcommand{\cH}{{\mathcal H}}
\newcommand{\cI}{{\mathcal I}}
\newcommand{\cJ}{{\mathcal J}}
\newcommand{\cK}{{\mathcal K}}
\newcommand{\cL}{{\mathcal L}}
\newcommand{\baL}{{\overline{\mathcal L}}}
\newcommand{\M}{{\mathcal M}}
\newcommand{\bM}{{\mathbf M}}
\newcommand{\bm}{{\mathbf m}}
\newcommand{\cN}{{\mathcal N}}
\newcommand{\theo}{\mathcal{O}}
\newcommand{\cP}{{\mathcal P}}
\newcommand{\cR}{{\mathcal R}}
\newcommand{\boldp}{{\mathbf P}}
\newcommand{\boldq}{{\mathbf Q}}
\newcommand{\bbL}{{\mathbf L}}
\newcommand{\cQ}{{\mathcal Q}}
\newcommand{\cO}{{\mathcal O}}
\newcommand{\Oo}{{\mathcal O}}
\newcommand{\OX}{{\Oo_X}}
\newcommand{\OY}{{\Oo_Y}}
\newcommand{\dd}{\mathcal{D}}
\newcommand{\Hamp}{\mathbb{H}^{\perp}}
\newcommand{\Ham}{\mathbb{H}}
\newcommand{\cX}{{\mathcal X}}
\newcommand{\cW}{{\mathcal W}}
\newcommand{\boldz}{{\mathbf Z}}
\newcommand{\cZ}{{\mathcal Z}}
\newcommand{\qgr}{\operatorname{qgr}}
\newcommand{\gr}{\operatorname{gr}}
\newcommand{\coh}{\operatorname{coh}}
\newcommand{\End}{\operatorname{End}}
\newcommand{\Hom}{\operatorname{Hom}}
\newcommand{\IndCoh}{\operatorname{IndCoh}}
\newcommand{\sHom}{\mathcal{H}om}
\newcommand{\sEnd}{\mathcal{E}nd}
\newcommand{\uHom}{\underline{\operatorname{Hom}}}
\newcommand{\uHomY}{\uHom_{\OY}}
\newcommand{\uHomX}{\uHom_{\OX}}
\newcommand{\Ext}{\operatorname{Ext}}
\newcommand{\bExt}{\operatorname{\bf{Ext}}}
\newcommand{\Tor}{\operatorname{Tor}}

\newcommand{\inv}{^{-1}}
\newcommand{\airtilde}{\widetilde{\hspace{.5em}}}
\newcommand{\airhat}{\widehat{\hspace{.5em}}}
\newcommand{\nt}{^{\circ}}
\newcommand{\del}{\partial}

\newcommand{\supp}{\operatorname{supp}}
\newcommand{\GK}{\operatorname{GK-dim}}
\newcommand{\W}{W}
\newcommand{\id}{\operatorname{id}}
\newcommand{\res}{\operatorname{res}}
\newcommand{\lrar}{\leadsto}
\newcommand{\im}{\operatorname{Im}}
\newcommand{\HH}{\operatorname{H}}
\newcommand{\Coh}[1]{#1\text{-}{\mathsf{coh}}}
\newcommand{\QCoh}[1]{#1\text{-}{\mathsf{qcoh}}}
\newcommand{\PCoh}[1]{#1\text{-}{\mathsf{procoh}}}
\newcommand{\Good}[1]{#1\text{-}{\mathsf{good}}}
\newcommand{\QGood}[1]{#1\text{-}{\mathsf{Qgood}}}
\newcommand{\Hol}[1]{#1\text{-}{\mathsf{hol}}}
\newcommand{\Reghol}[1]{#1\text{-}{\mathsf{reghol}}}
\newcommand{\Bun}{\operatorname{Bun}}
\newcommand{\Hilb}{\operatorname{Hilb}}
\newcommand{\pa}{\partial}
\newcommand{\F}{\mathcal{F}}
\newcommand{\nthord}{^{(n)}}
\newcommand{\Aut}{\underline{\operatorname{Aut}}}
\newcommand{\Gr}{\operatorname{\bf Gr}}
\newcommand{\Fr}{\operatorname{Fr}}
\newcommand{\GL}{\operatorname{GL}}
\newcommand{\gl}{\mathfrak{gl}}
\newcommand{\SL}{\operatorname{SL}}
\newcommand{\ff}{\footnote}
\newcommand{\ot}{\otimes}
\newcommand{\Wx}{\mathcal W_{\mathfrak X}}
\newcommand{\gh}{\text{gr}_{\hbar}}
\newcommand{\ig}{\iota_g}
\def\Ext{\operatorname {Ext}}
\def\Hom{\operatorname {Hom}}

\def\bbZ{{\mathbb Z}}
\newcommand{\iso}{{\;\stackrel{_\sim}{\to}\;}}

\newcommand{\nc}{\newcommand}
\newcommand{\on}{\operatorname}
\nc{\cont}{\on{cont}}
\nc{\rmod}{\on{mod}}
\nc{\Mtil}{\widetilde{M}}
\nc{\wb}{\overline}
\nc{\wt}{\widetilde}
\nc{\wh}{\widehat}
\nc{\sm}{\setminus}
\nc{\mc}{\mathcal}
\nc{\mbb}{\mathbb}
\nc{\Mbar}{\wb{M}}
\nc{\Nbar}{\wb{N}}
\nc{\Mhat}{\wh{M}}
\nc{\pihat}{\wh{\pi}}
\nc{\opp}{\mathrm{opp}}
\nc{\phitil}{\wt{\phi}}
\nc{\Qbar}{\wb{Q}}
\nc{\DYX}{\D_{Y\leftarrow X}}
\nc{\DXY}{\D_{X\to Y}}
\nc{\dR}{\stackrel{\bbL}{\underset{\D_X}{\ot}}}
\nc{\Winfi}{\cW_{1+\infty}}
\nc{\K}{{\mc K}}
\nc{\unit}{{\bf \on{unit}}}
\nc{\boxt}{\boxtimes}
\nc{\xarr}{\stackrel{\rightarrow}{x}}
\nc{\Cnatbar}{\overline{C}^{\natural}}
\nc{\oJac}{\overline{\on{Jac}}}
\nc{\gm}{{\mathbf G}_m}
\nc{\Loc}{\on{Loc}}
\nc{\Bm}{\operatorname{Bimod}}
\nc{\lie}{{\mathfrak g}}
\nc{\lb}{{\mathfrak b}}
\nc{\lien}{{\mathfrak n}}
\nc{\E}{\mathcal{E}}
\nc{\Cs}{\mathbb{G}_m}
\nc{\hol}{\mathrm{Hol}}
\nc{\can}{\mathrm{can}}
\newcommand{\idot}{{\:\raisebox{2pt}{\text{\circle*{1.5}}}}}

\nc{\Gm}{{\mathbb G}_m}
\nc{\Gabar}{\wb{\G}_a}
\nc{\Gmbar}{\wb{\G}_m}
\nc{\PD}{{\mathbb P}_{\D}}
\nc{\Pbul}{P_{\bullet}}
\nc{\PDl}{{\mathbb P}_{\D(\lambda)}}
\nc{\PLoc}{\mathsf{MLoc}}
\nc{\Tors}{\on{Tors}}
\nc{\PS}{{\mathsf{PS}}}
\nc{\PB}{{\mathsf{MB}}}
\nc{\Pb}{{\underline{\operatorname{MBun}}}}
\nc{\Ht}{\mathsf{H}}
\nc{\bbH}{\mathbb H}
\nc{\gen}{^\circ}
\nc{\Jac}{\operatorname{Jac}}
\nc{\sP}{\mathsf{P}}
\nc{\otc}{^{\otimes c}}
\nc{\Det}{\mathsf{det}}
\nc{\PL}{\on{ML}}

\nc{\ml}{{\mathcal S}}
\nc{\Xc}{X_{\on{con}}}
\nc{\sgood}{\text{strongly good}}
\nc{\Xs}{X_{\on{strcon}}}
\nc{\resol}{\mathfrak{X}}
\nc{\map}{\mathsf{f}}
\nc{\tor}{\mathrm{tor}}
\nc{\base}{Z}
\nc{\bigvar}{\mathsf{W}}
\nc{\alg}{\mathsf{A}}
\nc{\T}{\mathsf{T}}
\nc{\qcoh}{\on{qcoh}}
\renewcommand{\o}{\otimes}
\nc{\mf}{\mathfrak}
\nc{\NN}{\mathsf{N}}
\nc{\h}{\hbar}
\nc{\ms}{\mathscr}
\nc{\good}{\mathrm{good}}
\newcommand{\LMod}[1]{#1\text{-}{\mathsf{Mod}}}
\newcommand{\Lmod}[1]{#1\text{-}{\mathsf{mod}}}
\nc{\mbf}{\mathbf}
\nc{\ad}{\mathrm{ad}}
\nc{\Rees}{\mathsf{Rees}}
\nc{\Supp}{\mathrm{Supp}}
\nc{\Z}{\mathbb{Z}}
\nc{\N}{\mathbb{N}}
\nc{\ann}{\mathrm{ann}}
\nc{\Blf}{B_{\mathrm{l.f.}}}
\nc{\bx}{\mathbf{x}}
\nc{\by}{\mathbf{y}}
\nc{\bz}{\mathbf{z}}
\nc{\bw}{\mathbf{w}}
\nc{\Der}{\mathrm{Der}}
\nc{\CatCs}{\ms{C}}
\renewcommand{\mod}{ \ \mathrm{mod} \ }
\nc{\sA}{\mc{A}} 
\nc{\A}{A} 
\nc{\B}{B} 
\nc{\sW}{\mathscr{W}} 
\nc{\rh}{\mathrm{r.h.}}
\nc{\cs}{\mathbb{C}^*}
\nc{\R}{\mathbb{R}}
\nc{\Lie}{\mathrm{Lie}}
\nc{\Lag}{\mathsf{\Lambda}}
\nc{\ds}{\displaystyle}
\nc{\Qcoh}{\mathsf{Qcoh}}
\nc{\WQcoh}{\mathsf{Qcoh}(\cW)}
\nc{\Indf}{\mathsf{Ind}}
\nc{\DR}{\mathsf{DR}}
\nc{\co}{\operatorname{co}}
\nc{\op}{\operatorname{\opp}}
\nc{\Eq}{\mathrm{Eq}}
\renewcommand{\L}{\mathbb{L}}
\newcommand{\gwyn}[1]{\textcolor{brown}{#1}}

\title{A Localization Theorem For Finite W-algebras}

\author{Christopher Dodd and Kobi Kremnizer}
\begin{abstract}
Following the work of Beilinson-Bernstein \cite{key-15} and Kashiwara-Rouquier
\cite{key-7}, we give a geometric interpretation of certain categories
of modules over the finite W-algebra. Along the way, we give a new,
general Hamiltonian reduction formalism for quantizations and we reprove
the Skryabin equivalence.
\end{abstract}

\maketitle

\section{Introduction}

Let $\mathfrak{g}$ be a complex semisimple Lie algebra, and $U(\mathfrak{g})$
its enveloping algebra. The Beilinson-Bernstein localization theorem
\cite{key-15} gives a geometric interpretation of the category of
finitely generated modules over $U(\mathfrak{g})$ with trivial central
character. In particular, this category is equivalent to the category
$Mod^{coh}(D(G/B))$ of coherent $D-$modules on the flag variety
associated to $G$. This result can be explained as follows: there
is a natural map $T^{*}(G/B)\to N$ which is a resolution of singularities.
The normality of the variety $N$ implies that $\Gamma(N,O_{N})=\Gamma(T^{*}(G/B),O_{T^{*}G/B})$.
Further, the ring $U(\mathfrak{g})_{0}$ can be thought of as a quantization
of the nilpotent cone $N$, and the sheaf $D_{G/B}$ can be thought
of as a quantization of the variety $T^{*}(G/B)$. However, the sheaf
$D_{G/B}$ is not local on $T^{*}(G/B)$, only on $G/B$ itself. 

Kashiwara and Rouquier (in \cite{key-7}) give a framework for reformulating
this theorem using a notion of sheaves of asymptotic differential
operators. One can define a sheaf of algebras $D_{h}(G/B)$ on the
variety $T^{*}(G/B)$, which is (in some sense) a quantization. This
sheaf is defined over the power series field $\mathbb{C}((h))$, and
therefore the category of modules over it is not equivalent to a $\mathbb{C}$-linear
category of modules over $U(\mathfrak{g})$. However, this can be
corrected by considering the $\mathbb{C}^{*}$-action on $T^{*}(G/B)$
given by dilating the fibres. In particular, there is a notion of
$\mathbb{C}^{*}$-equivariant $D_{h}(G/B)$-module for which the equivalence
$Mod^{coh,\mathbb{C}^{*}}(D_{h}(G/B))\tilde{\to}Mod^{f.g.}(U(\mathfrak{g})_{0})$
holds. 

Our goal in this paper is to give a version of this theorem for the
finite W-algebras. We give the precise definition of these objects
below. For now, we simply note that given a nilpotent element $e\in N$,
there is subvariety $S_{e}\subseteq N$ called the transverse slice
to the $G$-orbit at $e$. This variety admits a natural $\mathbb{C}^{*}$-action
that contracts it to $e$. Then there is a filtered, noncommutative
algebra $U(\mathfrak{g},e)_{0}$ (the finite W-algebra at $e$ with
trivial central character) such that $gr(U(\mathfrak{g},e)_{0})\tilde{=}O(S_{e})$,
where the grading on $S_{e}$ is given by the aforementioned $\mathbb{C}^{*}$-action. 

There is a resolution of singularities $\tilde{S}_{e}\to S_{e}$ where
$\tilde{S}_{e}$ is the (set-theoretic) inverse image of $S_{e}$
under the map $T^{*}(G/B)\to N$. In addition, there is a $\mathbb{C}^{*}$-action
on $T^{*}(G/B)$ that preserves $\tilde{S}_{e}$ and for which the
resolution $\tilde{S}_{e}\to S_{e}$ is equivariant. Then, our main
theorem gives a sheaf of algebras on $\tilde{S}_{e}$ called $D_{h}(0,\chi)$,
which is (in a sense) a quantization of $\tilde{S}_{e}$ and for which
there is the equivalence $Mod^{coh,\mathbb{C}^{*}}(D_{h}(0,\chi))\tilde{\to}Mod^{f.g.}(U(\mathfrak{g},e)_{0})$.
Our proof relies on the fact that the variety $\tilde{S}_{e}$ is
not only a subvariety of $T^{*}(G/B)$ but can also be obtained via
the procedure of ``Hamiltonian reduction.'' We can then obtain the
sheaf $D_{h}(0,\chi)$ via the procedure of Hamiltonian reduction
of the sheaf $D_{h}(G/B)$. The proof of the result follows the same
lines as the proof of the classical Beilinson-Bernstein theorem. 

In the body of the paper, we prove all of the results outlined above,
in a slightly more general form. In particular, we work with categories
of modules over any anti-dominant central character, not just the
trivial one. Further, we give several applications to the theory of
W-algebras, including reproving the well-known Skryabin equivalence.
Beyond this, the results of this paper have been cited, e.g., in \cite{key-22},
and the paper \cite{key-20} provides a generalization to the affine
case. Another related reference is \cite{key-16}, chapter 6, where
different presentation of modules over the $W$-algebra with a given
central character is given. In that work, the idea is to present coherent
sheaves over the quasi-projective scheme $\tilde{S}_{e}$ as a Serre
quotient of modules over a suitable graded algebra; and then to present
a certain non-commutative deformation of that algebra (called a directed
algebra) so that a suitable Serre quotient of its modules are equivalent
to modules over the $W$-algebra. While there do not seem to be direct
implications in either direction, that work is certainly morally related
to this one. 

\subsection{Acknowledgements}

The authors would like to express their gratitude to an anonymous
referee, whose insightful comments (including a correction of the
key definition \ref{def:twist}) have vastly improved the paper. 

\section{\label{sec:W-algebras-and-quantum}W-algebras and Quantum Hamiltonian
Reduction}

Let $A$ be an associative algebra over $\mathbb{C}$, and let $M$
be a connected affine algebraic group; we set $Lie(M)=\mathfrak{m}$.
We suppose that there is an action of $M$ on $A$ that is algebraic
(i.e., locally finite), and respects the algebra structure. We assume
given an algebra morphism $\rho:U\mathfrak{m}\to A$ such that the
adjoint action of $\mathfrak{m}$ on $A$ (i.e., the action given
by $ad(m)(a)=\rho(m)a-a\rho(m)$ for all $m\in\mathfrak{m},a\in A$)
is the differential of the $M$ action. Let $I\subseteq U\mathfrak{m}$
be a two-sided ideal. Then it is easy to see that $(A/A\rho(I))^{M}$
inherits an algebra structure from $A$, called the quantum Hamiltonian
reduction of $A$ with respect to $I$. If there exists a character
$\chi$ on $\mathfrak{m}$ such that $I=ker(\chi)$ (where we also
use the letter $\chi$ to denote the unique extension of this character
to a character of $U\mathfrak{m}$), then we can describe the algebra
structure on $(A/A\rho(I))^{M}$ via an isomorphism $(A/A\rho(I))^{M}\tilde{\to}End_{A}(A/AI)^{op}$
that takes $u\in(A/A\rho(I))^{M}$ to right multiplication by $u$
in $A/AI$. 

We will now define the finite W-algebra $U(\mathfrak{g},e)$ via the
quantum Hamiltonian reduction procedure. For references on everything
in this section, see \cite{key-3}. We let $e\in\mathfrak{g}$ be
a nonzero nilpotent element. By the Jacobson-Morozov theorem, there
exist $f,h\in\mathfrak{g}$ such that $\{e,f,h\}$ form an $\mathfrak{sl_{2}}$-triple,
and we fix such a triple throughout. Given this, the adjoint action
makes $\mathfrak{g}$ into a finite dimensional $\mathfrak{sl_{2}}$-module,
and we have the corresponding weight decomposition $\mathfrak{g}=\oplus\mathfrak{g}(i)$,
where $\mathfrak{g}(i)=\{x\in\mathfrak{g}|[h,x]=ix\}$. This makes
$\mathfrak{g}$ into a graded Lie algebra. We let $\chi\in\mathfrak{g}^{*}$
be the element associated to $e$ under the isomorphism $\mathfrak{g}\tilde{=}\mathfrak{g}^{*}$
given by the Killing form. We define a skew-symmetric bilinear form
on $\mathfrak{g}(-1)$ via 
\[
<x,y>=\chi([x,y])
\]
which is easily seen to be non-degenerate. Thus, $(\mathfrak{g}(-1),<,>)$
is a symplectic vector space, and we choose $l\subset\mathfrak{g}(-1)$
a Lagrangian subspace. We define $\mathfrak{m}_{l}=l\oplus\bigoplus_{i\leq-2}g(i)$,
a nilpotent Lie algebra such that $\chi|_{\mathfrak{m}_{l}}$ is a
character of $\mathfrak{m}_{l}$. We let $M_{l}$ be the unipotent
connected algebraic subgroup of $G$ such that $Lie(M_{l})=\mathfrak{m}_{l}$.
Then $M_{l}$ acts on $U\mathfrak{g}$ via the adjoint action, and
we let $I\subset U\mathfrak{m}_{l}$ be the kernel of the character
$\chi$. So we see that we are in the setup of a quantum Hamiltonian
reduction (where $A=U\mathfrak{g}$, and $\rho:U\mathfrak{m}_{l}\to U\mathfrak{g}$
is the natural inclusion). 
\begin{defn}
The finite W-algebra associated to $e\in\mathfrak{g}$, denoted $U(\mathfrak{g},e)$,
is the quantum Hamiltonian reduction of $U\mathfrak{g}$ with respect
to $M_{l}$ and the ideal $I\subset U\mathfrak{m}_{l}$. 
\end{defn}

For example, if $e$ is a regular nilpotent element, then $U(\mathfrak{g},e)\tilde{=}Z(U\mathfrak{g})$;
we always have a canonical map $Z(U\mathfrak{g})\to U(\mathfrak{g},e)$
because $Z(U\mathfrak{g})=U(\mathfrak{g})^{G}\subset U(\mathfrak{g})^{M_{l}}$.
In fact, this map is always an isomorphism onto the center of $U(\mathfrak{g},e)$
(as explained in \cite{key-14} section 5, footnote 2). In case $e$
is regular, the map is actually surjective. 

We wish to ``explain'' the finite W-algebra by expressing it as
a quantization of the algebra of functions on the Slodowy slice $S\subset\mathfrak{g}^{*}$,
which is the image under $\mathfrak{g}\tilde{=}\mathfrak{g}^{*}$
of the affine subspace $e+ker(adf)$. 

To make this more precise, we introduce a $\mathbb{C}^{*}$ action
on $\mathfrak{g}$ as follows: our chosen $\mathfrak{sl}_{2}$-triple
gives a homomorphism $\tilde{\gamma}:SL_{2}(\mathbb{C})\to G$, and
we define $\gamma(t)=\tilde{\gamma}\begin{pmatrix}t & 0\\
0 & t^{-1}
\end{pmatrix}$, so that $Ad(\gamma(t)e=t^{2}e$; so we define $\bar{\rho}(t)=t^{-2}Ad(\gamma(t))$,
a $\mathbb{C}^{*}$-action on $\mathfrak{g}$ that stabilizes $S$
and fixes $e$ (in fact, the inverse of this action contracts $S$
to $e$). So, this action induces a grading on $S^{\bullet}\mathfrak{g}=\mathbb{C}[\mathfrak{g}^{*}]$
and $\mathbb{C}[S]$ (where we now think of $S\subset\mathfrak{g}^{*}$by
using the Killing form to identify $\mathfrak{g}$ and $\mathfrak{g}^{*}$,
and transport the $\mathbb{C}^{*}$-action accordingly). This grading
can now be described explicitly as follows: one writes $S^{\bullet}\mathfrak{g}=\bigoplus_{n\geq0}S^{n}\mathfrak{g}$,
the decomposition using the standard grading, and we let $S^{n}\mathfrak{g}(i)=\{x\in S^{n}\mathfrak{g}|[h,x]=ix\}$,
where $h\in\mathfrak{g}$ is as before, and the bracket denotes the
unique extension of the adjoint action of $h$ on $\mathfrak{g}$
to a derivation of $S^{\bullet}\mathfrak{g}$. The grading defined
above is then obtained by setting $S^{\bullet}\mathfrak{g}[n]=span\{S^{j}\mathfrak{g}(i)|i+2j=n\}$
for all $n\in\mathbb{Z}$ (note that negative degrees do in fact occur).
Then the grading on $\mathbb{C}[S]$ is the one inherited from $S^{\bullet}\mathfrak{g}$,
and it is easy to see that $\mathbb{C}[S]$ has only positive degrees
under this grading. Now, we define the Kazhdan filtration on $U\mathfrak{g}$
by first setting $U_{n}\mathfrak{g}(i)=\{x\in U_{n}\mathfrak{g}|[h,x]=ix\}$,
where $U\mathfrak{g}=\cup U_{n}\mathfrak{g}$ is the usual (PBW) filtration,
and the bracket is just the bracket in $U\mathfrak{g}$, and then
defining $F_{n}U\mathfrak{g}=span\{x\in U\mathfrak{g}_{j}(i)|i+2j\leq n\}$
for all $n\in\mathbb{Z}$. Then an easy application of the PBW theorem
shows that, considering $U\mathfrak{g}$ and $S^{\bullet}\mathfrak{g}$
with the above filtration and grading, $Gr(U\mathfrak{g})=S^{\bullet}\mathfrak{g}$.
If we let $U(\mathfrak{g},e)$ have the inherited filtration, then
we have
\begin{thm}
$Gr(U(\mathfrak{g},e))=\mathbb{C}[S]$
\end{thm}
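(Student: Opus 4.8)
The plan is to realise $U(\mathfrak{g},e)$ as a space of invariants, to compute its associated graded by reduction to the commutative picture, and to control the passage from the underlying module to its invariants via a vanishing theorem in Lie algebra cohomology. Write $\chi$ also for the induced character of $U\mathfrak{m}_l$ and put $\mathfrak{m}_{l,\chi}=\{x-\chi(x):x\in\mathfrak{m}_l\}$, so that $U\mathfrak{g}\cdot\rho(I)=U\mathfrak{g}\cdot\mathfrak{m}_{l,\chi}$; set $Q_\chi=U\mathfrak{g}/U\mathfrak{g}\mathfrak{m}_{l,\chi}$, a left $U\mathfrak{g}$-module carrying the quotient Kazhdan filtration $F_\bullet Q_\chi$. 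Since $u\rho(x)\equiv\chi(x)u$ modulo $U\mathfrak{g}\mathfrak{m}_{l,\chi}$, the adjoint $M_l$-action on $Q_\chi$ descended from $U\mathfrak{g}$ is the $\chi$-twisted left action $x\cdot q=\rho(x)q-\chi(x)q$; thus by definition $U(\mathfrak{g},e)=Q_\chi^{M_l}$ with the inherited filtration, and $Gr(U(\mathfrak{g},e))$---a subquotient of $Gr(U\mathfrak{g})=S^\bullet\mathfrak{g}$---is commutative and sits inside $Gr(Q_\chi)$. From $[F_aU\mathfrak{g},F_bU\mathfrak{g}]\subseteq F_{a+b-2}U\mathfrak{g}$ one reads off that $\mathrm{ad}(x)$ strictly lowers Kazhdan degree on $Q_\chi$ for every $x\in\mathfrak{g}(i)\cap\mathfrak{m}_l$ (necessarily $i\le -1$).

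Next I compute $Gr(Q_\chi)$. Choose a Lagrangian complement $l'$ of $l$ in $\mathfrak{g}(-1)$ and set $\mathfrak{t}=l'\oplus\bigoplus_{i\ge 0}\mathfrak{g}(i)$, so $\mathfrak{g}=\mathfrak{m}_l\oplus\mathfrak{t}$ and---crucially---every element of $\mathfrak{t}$ has strictly positive Kazhdan degree. Ordering a PBW basis with the $\mathfrak{t}$-part first shows that $Q_\chi$ has a basis of images of ordered monomials in $\mathfrak{t}$, compatibly with $F_\bullet$; hence $Gr(Q_\chi)\cong S^\bullet\mathfrak{t}$ is nonnegatively graded with finite-dimensional graded pieces, so $F_nQ_\chi=0$ for $n<0$. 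On the other hand the surjection $S^\bullet\mathfrak{g}=Gr(U\mathfrak{g})\twoheadrightarrow Gr(Q_\chi)$ annihilates the ideal generated by the symbols of the $x-\chi(x)$, which is precisely the (prime, Poisson-stable) ideal of the affine subspace $\chi+\mathfrak{m}_l^{\perp}\subseteq\mathfrak{g}^*$ defined by $\xi|_{\mathfrak{m}_l}=\chi|_{\mathfrak{m}_l}$; a Hilbert-series comparison shows that this is the whole kernel, so $Gr(Q_\chi)\cong\mathbb{C}[\chi+\mathfrak{m}_l^{\perp}]$ as graded Poisson algebras, with residual $\mathfrak{m}_l$-action the coadjoint (Hamiltonian) one. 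I then invoke the commutative analogue of Skryabin's theorem: the action map $M_l\times S\to\chi+\mathfrak{m}_l^{\perp}$ is a $\mathbb{C}^*$-equivariant isomorphism of varieties, proved via the contracting $\mathbb{C}^*$-action (see \cite{key-3}). Since $M_l$ is unipotent this torsor is trivial, hence $Gr(Q_\chi)\cong\mathbb{C}[M_l]\otimes\mathbb{C}[S]$ as graded $M_l$-modules; consequently $Gr(Q_\chi)^{M_l}\cong\mathbb{C}[S]$ with exactly the grading in the statement, and $H^{>0}(\mathfrak{m}_l,Gr(Q_\chi))=H^{>0}(\mathfrak{m}_l,\mathbb{C}[M_l])\otimes\mathbb{C}[S]=0$, because $H^{>0}(\mathfrak{m}_l,\mathbb{C}[M_l])$ computes the algebraic de Rham cohomology of the affine variety $M_l$.

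The remaining, and hardest, step is to show that the inclusion $Gr(Q_\chi^{M_l})\hookrightarrow Gr(Q_\chi)^{M_l}=\mathbb{C}[S]$ is onto; this is precisely where the non-reductivity of $M_l$ bites, since $Gr$ does not commute with invariants for formal reasons. I would lift invariant symbols. Given a homogeneous $\bar f\in\mathbb{C}[S]_n$, pick any lift $f_0\in F_nQ_\chi$; for $x\in\mathfrak{g}(i)\cap\mathfrak{m}_l$ one has $\mathrm{ad}(x)(f_0)\equiv[x,f_0]$ in $Q_\chi$, whose symbol in Kazhdan degree $n+i$ equals $\{x,\bar f\}=0$, so $\mathrm{ad}(x)(f_0)\in F_{n+i-1}Q_\chi$, and $x\mapsto\bigl(\text{symbol of }\mathrm{ad}(x)f_0\bigr)$ is a $1$-cocycle of a fixed internal degree for the Hamiltonian $\mathfrak{m}_l$-action on $Gr(Q_\chi)$. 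As $H^1(\mathfrak{m}_l,Gr(Q_\chi))=0$, this cocycle is a coboundary, allowing us to replace $f_0$ by $f_0$ plus an element of $F_{n-1}Q_\chi$ whose defect has improved by one further filtration step. Iterating---and using that $F_\bullet Q_\chi$ is bounded below and each $F_nQ_\chi$ finite-dimensional---the procedure terminates with an honest invariant $f\in(F_nQ_\chi)^{M_l}$ with symbol $\bar f$, so the inclusion is onto. (Alternatively one may follow Premet and prove directly that $Q_\chi\cong\mathbb{C}[M_l]\otimes U(\mathfrak{g},e)$ as filtered $M_l$-modules, for a suitable filtration on $\mathbb{C}[M_l]$; cancelling the factor $\mathbb{C}[M_l]$ after applying $Gr$ then gives the result, and this route simultaneously yields the Skryabin equivalence.) Combining the three steps gives $Gr(U(\mathfrak{g},e))\cong\mathbb{C}[S]$ as graded algebras, the grading on $\mathbb{C}[S]$ being the one induced by $\bar{\rho}$.
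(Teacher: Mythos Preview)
The paper does not actually prove this theorem; it is stated as background with the blanket reference ``For references on everything in this section, see \cite{key-3}'' (Gan--Ginzburg). Your argument is essentially a reconstruction of the Gan--Ginzburg proof: compute $Gr(Q_\chi)\cong\mathbb{C}[\chi+\mathfrak{m}_l^{\perp}]$, invoke the action isomorphism $M_l\times S\tilde{\to}\chi+\mathfrak{m}_l^{\perp}$, and then use vanishing of $H^{>0}(\mathfrak{m}_l,Gr(Q_\chi))$ to lift invariants through the filtration. So as far as comparison with the paper goes, you have supplied what the paper only cites.

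One expository wrinkle: in the lifting step you assert that $x\mapsto(\text{symbol of }\mathrm{ad}(x)f_0)$ is ``a $1$-cocycle of a fixed internal degree''. As written this is not quite right, since for $x\in\mathfrak{g}(i)\cap\mathfrak{m}_l$ the symbol lands in Kazhdan degree depending on $i$. The clean way to run your induction is to put the total (Kazhdan) grading on the Chevalley--Eilenberg complex $C^{\bullet}(\mathfrak{m}_l,Q_\chi)$, where $\mathfrak{m}_l^*$ carries the grading dual to the Kazhdan grading on $\mathfrak{m}_l$; then the cochain $x\mapsto\mathrm{ad}(x)f_0$ is homogeneous of total degree $n$, and its leading symbol is a genuine $1$-cocycle in the graded complex $C^{\bullet}(\mathfrak{m}_l,Gr(Q_\chi))$, to which your $H^1$-vanishing applies. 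Equivalently, one phrases this as the degeneration at $E_1$ of the spectral sequence associated to the Kazhdan filtration on $C^{\bullet}(\mathfrak{m}_l,Q_\chi)$, which is how \cite{key-3} packages the same computation. With that adjustment the argument is correct.
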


This isomorphism also puts a natural Poisson structure on $\mathbb{C}[S]$,
which is described in \cite{key-3}. 

Because of this theorem, the algebra $U(\mathfrak{g},e)$ is sometimes
referred to as the enveloping algebra of the slice $S$. 

\subsection{Hamiltonian Reduction for Asymptotic Enveloping Algebras}

In this subsection, we explain a version of the above constructions
for so-called asymptotic enveloping algebras- this variant will be
used repeatedly below. To set things up, we consider, as above, $M$,
a connected affine algebraic group; we set $Lie(M)=\mathfrak{m}$. 
\begin{defn}
\label{def:Asymptotic-Env} Let $U_{h}(\mathfrak{m})(0)$ be the algebra
defined as the $h$-completion of the algebra $T^{\bullet}\mathfrak{m}/I$,
where $I$ is the two-sided ideal in the tensor algebra $T^{\bullet}\mathfrak{m}$
over the polynomial ring $\mathbb{C}[h]$ generated by $\{xy-yx-h[x,y]|x,y\in\mathfrak{m}\}$.
Further, $U_{h}(\mathfrak{m})$ will denote $U_{h}(\mathfrak{m})(0)[h^{-1}]$.
The algebra $U_{h}(\mathfrak{m})(0)$ is called the asymptotic enveloping
algebra of $\mathfrak{m}$. 
\end{defn}

By construction one has 
\[
U_{h}(\mathfrak{m})(0)/h\tilde{\to}\mathrm{S}^{\bullet}(\mathfrak{m})
\]
where the object on the right is the symmetric algebra of $\mathfrak{m}$
over $\mathbb{C}$. It is not difficult to see that $U_{h}(\mathfrak{m})(0)$
is an $h$-complete $\mathbb{C}[[h]]$ algebra, which is is noetherian.
In particular, every ideal (either left, right, or two-sided) of $U_{h}(\mathfrak{m})(0)$
is finitely generated and $h$-complete. 

From the adjoint action of the group $M$ on $T^{\bullet}\mathfrak{m}$,
we deduce an action (also called the adjoint action) on $T^{\bullet}\mathfrak{m}/I$;
and therefore an $M$-action on each quotient $(T^{\bullet}\mathfrak{m}/I)/h^{n}$
for which the quotient maps are equivariant. Passing to the inverse
limit, we obtain an $M$ action on $U_{h}(\mathfrak{m})(0)$ that
we refer to as the adjoint action; inverting $h$ also gives an action
on $U_{h}(\mathfrak{m})$. 

We wish to consider the analogue of the Hamiltonian reduction construction
in this situation. To do so, we need the correct analogue of the algebra
$A$, equipped with its $M$-action. For that we give the 
\begin{defn}
\label{def:Ham-Reduc-for-quantized}Let $A_{h}$ be an associative
$\mathbb{C}[[h]]$-algebra, which is $h$-flat and complete with respect
to the $h$-adic topology. Suppose also\footnote{The noetherian condition probably isn't strictly necessary, but it
is always satisfied in this paper and so we assume it for convenience.} that $A_{h}$ is left and right noetherian. Then we say that $A_{h}$
is $M$-equivariant if there is an algebraic action of $M$, by algebra
automorphisms, on each reduction $A_{h}/h^{n}$ so that all the quotient
maps are $M$-equivariant. In this case there is an induced action
of $M$ on $A_{h}$, and we demand that $m\cdot h=h$ for all $m\in M$. 

Suppose we are given a map of algebras $\rho:U_{h}(\mathfrak{m})(0)\to A$,
which takes $h$ to $h$ and is continuous with respect to the $h$-adic
topology. Such a map is called a comoment map; we say that this map
is compatible with the action of $M$ if, upon reduction mod $h^{n}$
for each $n$, we have that the action $a\to\rho(m)a-a\rho(m)$ is
given by $h\cdot d(m)$ (where $d(m)$ denotes the differential of
the given action of $M$). 
\end{defn}

In this set-up, we have the following version of Hamiltonian reduction: 
\begin{prop}
\label{prop:Ham-reduction-is-algebra}Let $J$ be a two-sided ideal
of $U_{h}(\mathfrak{m})(0)$. Suppose that $A/A\cdot J$ is $h$-torsion
free. Then the complete $\mathbb{C}[[h]]$-module $(A/A\cdot J)^{\mathfrak{m}}$
is naturally an algebra so that, if $a,b\in A$ are elements whose
images in $A/A\cdot J$ are denoted $\bar{a}$ and $\bar{b}$, then
$\bar{a}\cdot\bar{b}$ is the reduction of the product $ab\in A$. 
\end{prop}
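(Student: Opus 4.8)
The plan is to take the concrete formula ``$\bar a\cdot\bar b:=\overline{ab}$'' as the definition and to verify, in turn, that $A\cdot J$ is a closed left ideal with $h$-torsion-free complete quotient, that $(A/A\cdot J)^{\mathfrak m}$ is precisely $\{\bar a:\rho(x)a-a\rho(x)\in A\cdot J\text{ for all }x\in\mathfrak m\}$, that for such $\bar b$ one has $(A\cdot J)b\subseteq A\cdot J$, and finally that the formula is well defined, stays in the invariants, and obeys the algebra axioms. For the first point: since $J$ is a two-sided ideal of the noetherian algebra $U_h(\mathfrak m)(0)$ it is finitely generated as a left ideal, say $J=\sum_{i=1}^sU_h(\mathfrak m)(0)j_i$, so the left ideal $A\cdot J$ of $A$ generated by $\rho(J)$ equals $\sum_{i=1}^sA\rho(j_i)$ and is finitely generated; as $A$ is noetherian and $h$-adically complete, $A\cdot J$ is closed in $A$ and $A/A\cdot J$ is $h$-complete, and by hypothesis it is $h$-torsion free. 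Because $J$ is two-sided, $A\cdot J$ is stable under right multiplication by $\rho(\mathfrak m)$ as well ($a\rho(j)\rho(x)=a\rho(jx)\in A\cdot J$), hence under each $\operatorname{ad}\rho(x)=[\rho(x),-]$; the compatibility of the comoment map gives $\operatorname{ad}\rho(x)=h\cdot d(x)$ on $A$, and since $A/A\cdot J$ is $h$-torsion free the operator $d(x)$ preserves $A\cdot J$ too, so the $M$-action descends to $A/A\cdot J$ and, cancelling the factor of $h$, the $\mathfrak m$-invariants are exactly the set displayed above.

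The crux is the following claim: if $\bar b\in(A/A\cdot J)^{\mathfrak m}$, then $\rho(u)b-b\rho(u)\in A\cdot J$ for \emph{every} $u\in U_h(\mathfrak m)(0)$. To see this, put $\Phi(u)=\overline{\rho(u)b-b\rho(u)}\in A/A\cdot J$; it is $\mathbb C[[h]]$-linear and $h$-adically continuous, and since $\rho$ is an algebra map a one-line computation (inserting $\pm\,\rho(u)b\rho(v)$) gives the Leibniz identity $\Phi(uv)=\rho(u)\cdot\Phi(v)+\Phi(u)\cdot\rho(v)$ in $A/A\cdot J$, where left multiplication by $\rho(u)$ and — using that $J$ is two-sided — right multiplication by $\rho(v)$ both act on $A/A\cdot J$. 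Now $\Phi$ vanishes on $\mathfrak m$ by invariance of $\bar b$; the Leibniz identity propagates this to every monomial in $\mathfrak m$ and hence to their $\mathbb C[[h]]$-span, which is $h$-adically dense because $U_h(\mathfrak m)(0)$ is the $h$-completion of the subalgebra generated by $\mathfrak m$; continuity together with completeness and separatedness of $A/A\cdot J$ then forces $\Phi\equiv0$. Taking $u=j_i$ and using $b\rho(j_i)\in A\cdot J$ yields $\rho(j_i)b\in A\cdot J$, so $(A\cdot J)b=\sum_iA\rho(j_i)b\subseteq A\cdot J$.

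With these in hand the rest is formal. For $\bar a,\bar b\in N:=(A/A\cdot J)^{\mathfrak m}$ with lifts $a,b\in A$, the class $\overline{ab}$ does not change when $b$ is modified within $A\cdot J$ (a left ideal) nor when $a$ is modified within $A\cdot J$ (by the consequence of the claim, $(A\cdot J)b\subseteq A\cdot J$), so $\bar a\cdot\bar b:=\overline{ab}$ is well defined; and for $x\in\mathfrak m$,
\[
\rho(x)(ab)-(ab)\rho(x)=\bigl(\rho(x)a-a\rho(x)\bigr)b+a\bigl(\rho(x)b-b\rho(x)\bigr)\in (A\cdot J)b+a(A\cdot J)\subseteq A\cdot J,
\]
so $\overline{ab}\in N$. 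Associativity and distributivity are inherited from $A$ since the product is literally induced by that of $A$; the class of $1\in A$ lies in $N$ and is a two-sided unit; $\mathbb C[[h]]$-bilinearity is immediate; and $N$, being the intersection of the kernels of the continuous operators $d(x)$ on the complete (hence Hausdorff) module $A/A\cdot J$, is a closed submodule and therefore a complete $\mathbb C[[h]]$-module — which is exactly the asserted structure. I expect the main obstacle to be the claim of the second paragraph: one must make the passage from monomials in $\mathfrak m$ to general elements of $U_h(\mathfrak m)(0)$ rigorous, which rests on $A\cdot J$ being genuinely closed in $A$ (so that the limiting argument is legitimate), and one must keep careful track of the factor of $h$ relating $\operatorname{ad}\rho(x)$ to the geometric action — both points hinging on the standing hypotheses that $A$ is noetherian and $h$-complete and that $A/A\cdot J$ is $h$-torsion free.
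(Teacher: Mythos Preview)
Your proof is correct and follows essentially the same route as the paper's: the heart of both arguments is the claim that if $\bar b$ is $\mathfrak m$-invariant then $\rho(u)b-b\rho(u)\in A\cdot J$ for every $u\in U_h(\mathfrak m)(0)$, established first for monomials in $\mathfrak m$ (your Leibniz identity is exactly the paper's inductive step, using that $J$ is two-sided so $A\cdot J\cdot\rho(\mathfrak m)\subseteq A\cdot J$) and then extended to all of $U_h(\mathfrak m)(0)$ by an $h$-adic limit using completeness of $A\cdot J$. Your write-up is in fact more careful than the paper's on several points (closure and completeness of $A\cdot J$, the identification of $\mathfrak m$-invariants via $h$-torsion-freeness, and the explicit check that $\overline{ab}$ is again invariant), but the strategy is the same.
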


\begin{proof}
Let $a,b\in A$ whose images $\bar{a},\bar{b}$ in $A/A\cdot J$ are
$\mathfrak{m}$-invariant. As $A/A\cdot J$ is $h$-torsion free,
the $\mathfrak{m}$-invariants are elements invariant under $\text{ad}(\rho(m))$
for all $m\in\mathfrak{m}$. Thus we see that, for any $m\in\mathfrak{m}_{l}$,
$\rho(m)a-a\rho(m)=j_{m}$ and $\rho(m)b-b\rho(m)=k_{m}$ for some
$j_{m},k_{m}\in A\cdot J$. Choose a $\mathbb{C}$-basis for $\mathfrak{m}$,
which we call $\{x_{i}\}$. Then any element of $U_{h}(\mathfrak{m})$
can be represented as a formal series 
\[
\sum_{I,i}c_{I,i}h^{i}x^{I}
\]
where $c_{I,i}\in\mathbb{C}$, $x^{I}=x_{!}^{i_{1}}\dots x_{n}^{i_{n}}$
for a multi-index $I$, and $i\to\infty$ as $|I|\to\infty$. Repeatedly
applying the above formulas for the action of $\rho(x)$, we see that
$\rho(x^{I})b-b\rho(x^{I})=\alpha_{x^{I}}+k_{x^{I}}$ where $k_{x^{I}}\in A\cdot J$
and $\alpha_{x^{I}}$ is an element of $A\cdot J$ (this is because
$J$ is a two-sided ideal of $U_{h}(\mathfrak{m})(0)$, so that $A\cdot J\cdot\rho(m)\in A\cdot J$
for all $m\in\mathfrak{m}$). Therefore, as $\rho(h)=h$ we have
\[
\rho(\sum_{I,i}c_{I,i}h^{i}x^{I})b=\sum_{I,i}c_{I,i}h^{i}(b\rho(x^{I})+\alpha_{x^{I}})=b\sum_{I,i}c_{I,i}h^{i}+\sum_{I,i}c_{I,i}h^{i}\alpha_{x^{I}}
\]
and as the latter sum is convergent in the complete ideal $A\cdot J$,
we see that 
\[
\rho(u)b-b\rho(u)\in A\cdot J
\]
for any $u\in U_{h}(\mathfrak{m})$. 

Now, consider any representatives for the classes $\bar{a},\bar{b}$;
call them $a+j_{1}$, $b+k_{1}$. Then 
\[
(a+j_{1})(b+k_{1})=ab+j_{1}b+ak_{1}+j_{1}k_{1}
\]
and, as $j_{1}\in A\cdot J$, we obtain $j_{1}b\in A\cdot J$ by commuting
elements of form $\rho(u)$ past the $b$ via the above claim. Therefore
$(a+j_{1})(b+k_{1})$ is congruent to $ab$ in $A/A\cdot J$ and the
result follows. 
\end{proof}
Now, in the main case of interest in this paper, $R=A/h$ will be
the ring of coordinates of an algebraic variety, on which $M$ acts
algebraically. In that case, as $M$ is connected, one has that the
$M$-invariants in $R/(\overline{J})$ are the same as the $\mathfrak{m}$-invariants.
As $A/A\cdot J$ is an infinitesimal deformation of $R/(\overline{J})$,
a quick proof by induction shows that $(A/A\cdot J)^{M}=(A/A\cdot J)^{\mathfrak{m}}$.
Thus we obtain 
\begin{cor}
\label{cor:Ham-reduction-group}With hypotheses as above, the complete
$\mathbb{C}[[h]]$-module $(A/A\cdot J)^{M}$ is naturally an algebra
so that, if $a,b\in A$ are elements whose images in $A/A\cdot J$
are denoted $\bar{a}$ and $\bar{b}$, then $\bar{a}\cdot\bar{b}$
is the reduction of the product $ab\in A$. 
\end{cor}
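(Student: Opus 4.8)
The plan is to deduce the corollary from Proposition~\ref{prop:Ham-reduction-is-algebra}. Since that proposition already equips $(A/A\cdot J)^{\mathfrak{m}}$ with the product under which $\bar a\cdot\bar b$ is the reduction of $ab$, it suffices to show that, under the present (more restrictive) hypotheses, the invariant subspaces $(A/A\cdot J)^{M}$ and $(A/A\cdot J)^{\mathfrak{m}}$ of $A/A\cdot J$ actually coincide; the algebra structure then transfers unchanged. Write $N:=A/A\cdot J$. The inclusion $N^{M}\subseteq N^{\mathfrak{m}}$ is automatic, the $\mathfrak{m}$-action being the differential of the $M$-action, so only the reverse inclusion is needed. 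I would then exploit $h$-completeness of $N$: since $m\cdot x=x$ in $N$ is equivalent to $m\cdot x-x\in h^{n}N$ for every $n$, one has $N^{M}=\varprojlim_{n}(N/h^{n}N)^{M}$ and likewise for $\mathfrak{m}$, so it is enough to prove $(N/h^{n}N)^{\mathfrak{m}}\subseteq(N/h^{n}N)^{M}$ for each $n\ge 1$. This I would do by induction on $n$.

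For the base case $n=1$ I would use that $N/hN$ is a quotient $R/\overline{J}$ of the coordinate ring $R=A/h$, on which $M$ acts algebraically by hypothesis; thus $N/hN$ is a locally finite $M$-representation, and for such a representation in characteristic zero the $M$-invariants coincide with the $\mathfrak{m}$-invariants, since the stabilizer of a vector is a closed subgroup of $M$ whose Lie algebra is all of $\mathfrak{m}$ and which is therefore equal to $M$. This is the only step that uses the extra hypotheses of the corollary beyond those of Proposition~\ref{prop:Ham-reduction-is-algebra}.

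For the inductive step, $h$-torsion-freeness of $N$ makes multiplication by $h^{n-1}$ an isomorphism of $M$-modules (and of $\mathfrak{m}$-modules) between $N/hN$ and $h^{n-1}N/h^{n}N$, which sits in the exact sequence $0\to h^{n-1}N/h^{n}N\to N/h^{n}N\to N/h^{n-1}N\to 0$. Given $v\in(N/h^{n}N)^{\mathfrak{m}}$, its image in $N/h^{n-1}N$ is $\mathfrak{m}$-invariant, hence $M$-invariant by the inductive hypothesis, so the assignment $c\colon m\mapsto m\cdot v-v$ lands in $h^{n-1}N/h^{n}N$; since $v$ is killed by all of $\mathfrak{m}$, so is every $c(m)$, whence by the base case each $c(m)$ lies in $(h^{n-1}N/h^{n}N)^{M}$. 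The cocycle identity then collapses to $c(mm')=c(m)+c(m')$, exhibiting $c$ as a morphism of varieties $M\to(h^{n-1}N/h^{n}N)^{M}$ that is a group homomorphism into the additive vector group and has vanishing differential at the identity. In characteristic zero such a homomorphism is trivial, so $c\equiv 0$, i.e.\ $v\in(N/h^{n}N)^{M}$, completing the induction.

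The step I expect to need the most care is the bookkeeping in the inductive step, in particular verifying that $c$ is a regular map into the fixed subspace $(h^{n-1}N/h^{n}N)^{M}$; this uses that the $M$-action on each $N/h^{n}N$ is locally finite together with the base case to control the values of $c$. Everything else --- left-exactness of taking invariants, the identification $h^{n-1}N/h^{n}N\cong N/hN$, and the characteristic-zero comparison of $\mathfrak{m}$- and $M$-invariants --- is routine. As a shortcut one could instead observe that each $N/h^{n}N$, being a subquotient of the locally finite $M$-representation $A_{h}/h^{n}$, is itself locally finite, and apply the characteristic-zero comparison of invariants directly at each level; I would nonetheless present the induction, since it isolates most cleanly the single place where the hypotheses of the corollary are used.
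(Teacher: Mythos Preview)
Your proposal is correct and follows essentially the same approach as the paper: reduce to Proposition~\ref{prop:Ham-reduction-is-algebra} by showing $(A/A\cdot J)^{M}=(A/A\cdot J)^{\mathfrak m}$, establish the base case using connectedness of $M$ on the algebraic quotient $R/\overline J$, and then pass up the $h$-adic tower by induction. The paper merely says ``a quick proof by induction shows that $(A/A\cdot J)^{M}=(A/A\cdot J)^{\mathfrak m}$''; you have supplied exactly that induction (and your shortcut via local finiteness of each $N/h^nN$ as a subquotient of $A_h/h^n$ is in fact the cleanest way to carry it out).
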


This type of Hamiltonian reduction will occur throughout the paper. 

\section{Differential Operators and Quantization}

Let $X$ be a smooth complex algebraic variety. Then the sheaf of
differential operators on $X$, $D_{X}$, is a sheaf of filtered algebras
whose associated graded sheaf is isomorphic to $\pi_{*}(O_{T^{*}X})$
where $T^{*}X$ is the cotangent bundle to $X$, and $\pi:T^{*}X\to X$
is the natural map (see \cite{key-1} for details). So $D_{X}$ is
a quantization of the cotangent bundle of $X,$ but it is only local
on $X$, not $T^{*}X$. To correct this, we introduce the following
\begin{defn}
\label{def:D_h}(c.f \cite{key-6}) Let $X$ be an affine complex
algebraic variety. The we define the algebra of asymptotic differential
operators on $X$, $D_{h}(X)(0),$ to be the $h$ completion of the
algebra generated by $O_{X}$, the global vector fields $\Theta_{X}$,
and the variable $h$, subject to the relations $f_{1}*f_{2}=f_{1}f_{2}$
($f_{i}\in O_{X}$), $f*\xi=f\xi$, $\xi*f-f*\xi=h\xi(f)$, $\xi_{1}\xi_{2}-\xi_{2}\xi_{1}=h[\xi_{1},\xi_{2}]$,
($f\in O_{X},\xi_{i}\in\Theta_{X}$). The algebra $D_{h}(X)(0)$ is
a quantization of $T^{*}X$ in the sense of the definition given below.
We may apply localization to this algebra to obtain a sheaf on $T^{*}X$;
for a general algebraic variety we glue this construction to obtain
the sheaf of asymptotic differential operators $D_{h}(X)(0)$. We
again emphasize that this is a sheaf on $T^{*}X$, not on $X$. 
\end{defn}

The general context for this definition is given by 
\begin{defn}
\label{def:Quantization}(c.f \cite{key-5}) Let $Y$ be a Poisson
variety, i.e., an complex algebraic variety equipped with a Poisson
bracket on the structure sheaf. A quantization of $Y$, $O_{h}$,
is a sheaf of associative, flat $\mathbb{C}[[h]]$ algebras on $Y$
that is complete with respect to the $h$-adic topology and equipped
with an isomorphism $O_{h}/hO_{h}\tilde{\to}O_{Y}$. This gives $O_{Y}$
the structure of a sheaf of Poisson algebras, and we demand that this
structure agrees with the given one. 
\end{defn}

Most of the time (though not always!) in this paper, it will be the
case that $Y$ is smooth and the Poisson structure in question comes
from a symplectic form on $Y$. In the case of \ref{def:D_h},
the symplectic variety in question is of the form $T^{*}X$. 

As the algebras appearing here are $h$-torsion free, it is sometimes
convenient to invert $h$. We define $D_{h}(X):=D_{h}(X)(0)[h^{-1}]$
for any algebraic variety; this is a $\mathbb{C}((h))$-linear sheaf
on $T^{*}X$. Although not a quantization, this is the sheaf of algebras
that we will actually use in this paper, for reasons that will become
clear in the next section. 

We note at this point that this sheaf is considered (in a somewhat
different notation) in the paper \cite{key-7}. There, they introduce
the formalism of $W$-algebras (no relation to the $W$-algebras in
section 1!). To avoid confusion, we will call them $QDO$-algebras,
standing for quantized differential operator algebras. We recall now
the 
\begin{defn}
\cite{key-7} Let $X$ be a smooth holomorphic symplectic variety
of dimension $2n$. A $QDO$-algebra on $X$ is a sheaf of $\mathbb{C}((h))$-linear
algebras, $D_{h}$, such that for each $x\in X$, there exists an
open neighborhood $U$ of $x$ and a symplectic holomorphic morphism
$\phi:U\to T^{*}\mathbb{C}^{n}$ such that $D_{h}|_{U}\tilde{\to}\phi^{*}D_{h}(\mathbb{C}^{n})$. 
\end{defn}

Although this is a convenient definition in the analytic topology,
in the Zariski topology it is very poorly behaved\footnote{We would like to thank the referee for emphasizing this point.}.
Therefore we will not use this definition in this paper; however,
we should note that several of the basic techniques of \cite{key-7}
do carry over to this situation; in particular, many of their arguments
only rely on the fact that a $QDO$ is a quantization in the sense
of \ref{def:Quantization}; when we quote results from this
paper (as we do a few times in the sequel), we only quote results
of this type.

Next, we define the categories of modules over quantizations that
we will consider. Let us recall from \cite{key-8}, theorem 1.2.5,
that if $O_{h}$ is a quantization of a smooth variety $Y$, then
$O_{h}$ is a locally noetherian, stalk-wise noetherian sheaf of algebras. 
\begin{defn}
Let $O_{h}$ be a quantization of the smooth Poisson algebraic variety
$Y$. An $O_{h}$-module $M(0)$ is coherent if it is locally finitely
generated. Further, any such module is automatically complete in the
$h$-adic topology (by \cite{key-8}, theorem 1.2.5, part 3). 

An $O_{h}[h^{-1}]$ module $M$ is said to be coherent if there exists,
globally, a coherent $O_{h}[h^{-1}]$ module, $M(0)$, such that $M\tilde{=}M(0)[h^{-1}]$.
An $O_{h}[h^{-1}]$-module is said to be quasi-coherent if it is a
direct limit of coherent modules; i.e., we simply define $Mod^{qc}(O_{h}[h^{-1}])$
as the ind-category of $Mod^{coh}(O_{h}[h^{-1}])$. 
\end{defn}

As noted above, we may refer the reader to \cite{key-8} (chapter
1) for details about modules over quantized algebras in a very general
context. In particular, we note that our definition of ``coherent''
for $D_{h}(0)$-modules agrees with the one given there. Applying
again their theorem 1.2.5, we have
\begin{lem}
An $O_{h}$-module $M(0)$ is coherent iff $M(0)$ is $h$-complete
and $h^{n}M(0)/h^{n+1}M(0)$ is a coherent $O_{Y}$ module for all
$n\geq0$. This is equivalent to the condition of $M(0)$ having bounded
$h$-torsion and $M(0)/hM(0)$ being coherent over $O_{Y}$. 
\end{lem}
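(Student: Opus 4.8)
The statement is local on $Y$, and every condition that occurs in it — coherence of $M(0)$ over $O_h$, $h$-completeness, coherence of the pieces $h^{n}M(0)/h^{n+1}M(0)$ over $O_Y$, boundedness of the $h$-torsion of $M(0)$, and coherence of $M(0)/hM(0)$ over $O_Y$ — can be tested after restriction to the members of an affine open cover over which $O_h$ is noetherian and $O_Y$ is noetherian (equivalently, stalk-wise, using that $O_h$ is stalk-wise noetherian and $Y$ is smooth). So the plan is to fix such an open set, replace everything by honest modules over honest noetherian rings, and then run a standard $h$-adic Nakayama argument together with some bookkeeping of the associated graded of the $h$-filtration. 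The implication ``coherent $\Rightarrow$ $h$-complete'' is not part of this bookkeeping: it is exactly \cite{key-8}, Theorem 1.2.5(3), which I would simply cite.

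\emph{Coherent implies both stated conditions.} Locally $M(0)=O_h m_1+\dots+O_h m_k$; reducing mod $h$, the images of the $m_i$ generate $M(0)/hM(0)$ over $O_h/hO_h=O_Y$, which is noetherian, so $M(0)/hM(0)$ is coherent. Multiplication by $h^{n}$ gives a well-defined surjection $M(0)/hM(0)\too h^{n}M(0)/h^{n+1}M(0)$ (well defined since $h^{n}(hM(0))\subseteq h^{n+1}M(0)$), so every graded piece is coherent over $O_Y$ as well; in particular the condition ``$h^{n}M(0)/h^{n+1}M(0)$ coherent for all $n$'' is equivalent to the case $n=0$. Finiteness of the $h$-torsion is immediate from noetherianity of $O_h$: the $O_h$-submodule of $h$-power-torsion elements is finitely generated, hence killed by a single power $h^{N}$. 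Together with \cite{key-8}, Theorem 1.2.5(3), this gives coherent $\Rightarrow$ first condition and coherent $\Rightarrow$ second condition.

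\emph{Converse.} The key tool is the topological Nakayama lemma in this setting: if $M(0)$ is $h$-adically separated and complete and $M(0)/hM(0)$ is finitely generated over $O_Y$, then $M(0)$ is finitely generated over $O_h$. I would prove this by lifting generators $\bar m_1,\dots,\bar m_k$ of $M(0)/hM(0)$ to elements $m_i\in M(0)$ and, given $m\in M(0)$, producing by successive approximation scalars $b_i=\sum_{j\ge 0}h^{j}a_i^{(j)}\in O_h$ (the partial sums converging $h$-adically since $O_h$ is $h$-complete) with $m-\sum_i b_i m_i\in\bigcap_t h^{t}M(0)=0$, using $h$-adic separatedness of $M(0)$ for the last equality. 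Granting this, the first condition already forces $M(0)/hM(0)$ coherent over $O_Y$ (case $n=0$), hence $M(0)$ locally finitely generated, i.e. coherent. For the second condition, one splits off the $h$-torsion submodule $\Gamma\subseteq M(0)$: by hypothesis $h^{N}\Gamma=0$, so $\Gamma$ is a module over $O_h/h^{N}O_h$ and a short induction on $N$ (each layer $h^{n}\Gamma/h^{n+1}\Gamma$ being a quotient of $\Gamma/h\Gamma$, itself a quotient of $M(0)/hM(0)$) shows $\Gamma$ is coherent over $O_Y$, hence over $O_h$; the quotient $M(0)/\Gamma$ is $h$-torsion-free, $h$-complete, with $(M(0)/\Gamma)/h(M(0)/\Gamma)$ a quotient of $M(0)/hM(0)$, so topological Nakayama applies to it, and an extension of two coherent modules is coherent. (Alternatively this entire equivalence is packaged in \cite{key-8}, Theorem 1.2.5, parts 2 and 3, and one may just quote it.)

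The part I expect to require genuine care is the topological bookkeeping: using ``$h$-complete'' consistently as separated-and-complete, checking that the successive-approximation series converge in $O_h$ and that the resulting identity really holds in $M(0)$ rather than only modulo each $h^{t}$, and verifying that restriction to an affine open / passage to a stalk preserves all the finiteness hypotheses. Once these points are pinned down, each implication is short, and the cleanest write-up is to present the lemma as a corollary of \cite{key-8}, Theorem 1.2.5, supplying only the $h$-adic Nakayama step and the remark that the graded pieces $h^{n}M(0)/h^{n+1}M(0)$ are all quotients of $M(0)/hM(0)$.
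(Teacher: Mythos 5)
The paper does not actually prove this lemma: it is explicitly a consequence of Kashiwara--Schapira \cite{key-8}, Theorem 1.2.5, cited as such, and you correctly identified that citation as the intended source. Your write-up is a reconstruction of what KS prove. The direction ``coherent $\Rightarrow$ both conditions'' and the direction ``first condition $\Rightarrow$ coherent'' are fine (the reduction to an affine open, the observation that every graded piece $h^{n}M(0)/h^{n+1}M(0)$ is a quotient of $M(0)/hM(0)$ via multiplication by $h^n$, and the $h$-adic Nakayama argument are exactly the right tools, and the completeness hypothesis is precisely what makes Nakayama legitimate).

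The gap is in the passage ``second condition $\Rightarrow$ coherent''. You write that after splitting off the bounded torsion submodule $\Gamma$, ``the quotient $M(0)/\Gamma$ is $h$-torsion-free, $h$-complete, \dots so topological Nakayama applies''. The $h$-completeness of $M(0)/\Gamma$ is asserted, not proved, and it does not follow from the two stated hypotheses. Indeed, take $M(0)=O_h[h^{-1}]$ (on an affine piece): it has no $h$-torsion, and $M(0)/hM(0)=0$ is coherent, yet $M(0)$ is not coherent over $O_h$; the topological Nakayama step fails exactly because $M(0)$ is not $h$-adically separated. So either the second condition in the lemma carries an implicit separatedness/completeness hypothesis inherited from the framework of \cite{key-8} (where modules are often assumed a priori to sit inside, or arise as quotients of, complete ones), or the equivalence needs to be stated with that hypothesis spelled out. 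Your proof should either establish separatedness and completeness of $M(0)/\Gamma$ from first principles (which will require more than the two listed conditions), or state clearly that it is being assumed.

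A smaller point: you say $\Gamma/h\Gamma$ is ``a quotient of $M(0)/hM(0)$''. The direction is backwards. Since $M(0)/\Gamma$ is $h$-torsion-free, the long exact $\operatorname{Tor}$-sequence for $0\to\Gamma\to M(0)\to M(0)/\Gamma\to 0$ against $O_h/hO_h$ shows the natural map $\Gamma/h\Gamma\to M(0)/hM(0)$ is \emph{injective}, so $\Gamma/h\Gamma$ is a submodule, not a quotient. The conclusion you want (coherence of $\Gamma/h\Gamma$) still follows because $O_Y$ is noetherian and submodules of coherent modules are coherent, so this is repairable; but as written the reasoning is wrong. Given that the paper simply quotes \cite{key-8}, the cleanest fix, as you yourself suggest at the end, is to present the lemma as a direct corollary of Theorem 1.2.5 there and only supply the routine bookkeeping about the graded pieces.
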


We also note that the categories $Mod^{coh}(O_{h})$, $Mod^{coh}(O_{h}[h^{-1}])$
and $Mod^{qc}(O_{h})$ are abelian; the first two by the noetherian
property of $O_{h}$, and the last because it is an ind-category. 

To finish this section, we note a key fact about the cohomology of
modules over the algebra $O_{h}$. This is lemma $2.12$ in \cite{key-7},
and its proof goes over mutatis mutandis to the algebraic situation;
one may also consult (the proof of) \cite{key-8}, theorem 1.2.5,
part 5. 
\begin{lem}
\label{lem:basic-vanishing} Let $M(0)$ be a coherent $O_{h}$ module.
Assume that $H^{i}(X,M(0)/hM(0))=0$ for $i>0$. Then we have that 

i) The natural map $\Gamma(X,M(0))\to\Gamma(X,M(0)/hM(0))$ is surjective.

ii) $H^{i}(X,M(0))=0$ for $i>0$. 
\end{lem}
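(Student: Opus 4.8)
The plan is to exploit the $h$-adic filtration on $M(0)$ and reduce everything to the vanishing hypothesis on $M(0)/hM(0)$, using that $M(0)$ is $h$-complete (being coherent) and that cohomology on a fixed Noetherian scheme commutes with the relevant inverse limits. First I would set up the short exact sequences of sheaves
\[
0 \to h^{n}M(0)/h^{n+1}M(0) \to M(0)/h^{n+1}M(0) \to M(0)/h^{n}M(0) \to 0,
\]
where the left-hand term is, via multiplication by $h^{n}$, identified with a quotient of $M(0)/hM(0)$; in fact by the previous lemma each $h^{n}M(0)/h^{n+1}M(0)$ is coherent over $O_Y$, and more to the point it is a subquotient built out of $M(0)/hM(0)$, so its higher cohomology vanishes as well. (The cleanest way: since $M(0)/hM(0)$ is coherent with $H^i=0$ for $i>0$, and each graded piece $h^nM(0)/h^{n+1}M(0)$ receives a surjection from $M(0)/hM(0)$ whose kernel is again coherent, an easy induction on $n$ using the long exact sequence gives $H^i(X, M(0)/h^nM(0))=0$ for all $i>0$ and all $n$, and simultaneously that each $\Gamma(X,M(0)/h^{n+1}M(0))\to\Gamma(X,M(0)/h^nM(0))$ is surjective.)

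Next I would pass to the limit. Since $M(0)$ is $h$-complete, $M(0) = \varprojlim_n M(0)/h^nM(0)$, and on a Noetherian separated scheme $X$ one has, for a countable inverse system of quasi-coherent sheaves with surjective transition maps, that $\varprojlim^1$ vanishes and $H^i(X, \varprojlim_n \mathcal F_n) = \varprojlim_n H^i(X, \mathcal F_n)$ (this is exactly the type of argument in \cite{key-7}, Lemma 2.12, and in \cite{key-8}, Theorem 1.2.5(5)). Applying this: $H^i(X, M(0)) = \varprojlim_n H^i(X, M(0)/h^nM(0)) = 0$ for $i>0$, which is part (ii). For part (i), the surjectivity of the transition maps on global sections (established in the previous step) combined with the vanishing of $\varprojlim^1$ shows $\Gamma(X,M(0)) = \varprojlim_n \Gamma(X,M(0)/h^nM(0)) \twoheadrightarrow \Gamma(X,M(0)/hM(0))$, which is part (i).

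The main obstacle is the Mittag-Leffler/$\varprojlim^1$ bookkeeping: one must check that the hypotheses legitimately allow interchanging $\varprojlim$ with sheaf cohomology on $X$. This needs (a) that $X$ is Noetherian so that each $H^i(X,-)$ commutes with filtered colimits and the relevant derived-limit spectral sequence behaves, and (b) the surjectivity of the transition maps at the level of global sections, which is why part (i) must be proven \emph{en route} rather than after the fact — it is precisely the Mittag-Leffler condition that feeds into part (ii). I would simply cite \cite{key-7}, Lemma 2.12, or \cite{key-8}, Theorem 1.2.5(5), for this interchange, since the excerpt explicitly says the proof "goes over mutatis mutandis"; the only thing to verify is that the input hypothesis $H^i(X, M(0)/hM(0))=0$ propagates to all graded pieces and all finite truncations, which is the elementary long-exact-sequence induction described above.
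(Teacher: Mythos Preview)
Your proposal is correct and matches the paper's approach: the paper gives no proof of its own but cites \cite{key-7}, Lemma~2.12, and \cite{key-8}, Theorem~1.2.5(5), and your sketch is precisely the argument in those references---induct along the $h$-adic filtration to obtain vanishing and surjectivity for each $M(0)/h^nM(0)$, then pass to the inverse limit via Mittag--Leffler.

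One caveat worth flagging: your claim that each graded piece $h^nM(0)/h^{n+1}M(0)$ has vanishing higher cohomology ``because it receives a surjection from $M(0)/hM(0)$'' is only immediate when $M(0)$ is $h$-torsion free, since then multiplication by $h^n$ is an \emph{isomorphism} $M(0)/hM(0)\cong h^nM(0)/h^{n+1}M(0)$; in general a quotient of a sheaf with vanishing higher cohomology need not itself have vanishing higher cohomology (the long exact sequence feeds in $H^{i+1}$ of the kernel, which you have not controlled). This does no harm in context: the good lattices in \cite{key-7} are $h$-torsion free by definition, and every application of this lemma in the present paper (to $D_h(\lambda)(0)$, to $L_\chi$, to $F_{-\mu}(M(0))$ for a lattice, etc.) is to an $h$-torsion-free module. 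If you want the statement in the generality written, you can either reduce to the torsion-free quotient using bounded $h$-torsion together with finite cohomological dimension of $Y$, or bypass the graded pieces entirely and appeal to the cohomological-completeness machinery of \cite{key-8}.
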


\section{Equivariance }

We suppose now that we have an algebraic group $G$ acting algebraically
on our Poisson variety $Y$, in such a way that the group action respects
the Poisson bracket. We wish to define equivariant versions of everything
introduced in the previous section. We start with a general
\begin{defn}
\label{def:Equivariance-for-O}Let $O_{h}$ be a quantization of $Y$.
Then $O_{h}$ is said to be $G$-equivariant if each sheaf $O_{h}/h^{n}O_{h}$
(for $n\geq0$) admits a $G$-equivariant structure (as a sheaf of
algebras; i.e., we demand that the action of $G$ respect the multiplication),
in such a way that the natural maps $O_{h}/h^{n+1}O_{h}\to O_{h}/h^{n}O_{h}$
are $G$-morphisms. We demand that $h$ be stable under the action
of $G$ in the sense that, on global sections, $\rho_{g}^{-1}(h)=\chi(g)h$,
where $\chi$ is an algebraic character of $G$ (which will usually
be the trivial character).
\end{defn}

In particular, this definition gives us isomorphisms $O_{h}\tilde{\to}\rho_{g}^{-1}O_{h}$
(where $\rho_{g}:Y\tilde{\to}Y$ is the map associated to $g\in G$)
for all $g\in G$. This definition extends immediately to the algebra
$O_{h}[h^{-1}]$; we simply extend the action by demanding that $G$
act on $h^{-1}$ by the inverse of the character $\chi$.

To obtain equivariance conditions for coherent modules, we let $M\in Mod^{coh}(O_{h})$
and let $M(0)$ be a lattice. We further suppose that $O_{h}$ is
a $G$-equivariant sheaf in the above sense. Then we have
\begin{defn}
$M(0)$ is a quasi-$G$-equivariant $O_{h}$-module if each sheaf
$M(0)/h^{n}M(0)$ (for $n\geq0$) is $G$-equivariant as a module
over $O_{h}/h^{n}$; i.e., there is a $G$-action on $M(0)/h^{n}M(0)$
that is compatible with the $G$-action on $O_{h}/h^{n}$, in the
sense that the action map 
\[
O_{h}/h^{n}\otimes M(0)/h^{n}M(0)\to M(0)/h^{n}M(0)
\]
is $G$-equivariant for each $n$; and this action makes $M(0)/hM(0)$
into an equivariant coherent sheaf. We demand that the natural quotient
maps are $G$-morphisms. We demand that $h$ be stable under the action
of $G$ in the sense that, on global sections, $\rho_{g}^{-1}(h)=\chi(g)h$
where $\chi$ is an algebraic character of $G$ (which will usually
be the trivial character). 

We say that $M$ is $G$-equivariant if it admits a $G$-action, and
there is a $G$-stable lattice $M(0)$ which is $G$-equivariant in
the above sense.
\end{defn}

If $M$ is a quasicoherent $O_{h}$-module equipped with an action
of $G$, we say that it is $G$-equivariant if it is a direct limit
of $G$-equivariant coherent modules. 

This definition gives us categories $Mod^{G,?}(O_{h})$ (where $?$
is coherent or quasicoherent), where we demand that the morphisms
respect the $G$-structure. %

If we go back to our primary example where our symplectic variety
is $T^{*}X$, then the sheaf $D_{h}(X)$ is $\mathbb{C}^{*}$- equivariant
for the $\mathbb{C}^{*}$action on $T^{*}X$ given by dilation on
the fibers of $\pi:T^{*}X\to X$. Therefore, we can consider the sheaf
of fixed points, denoted $D_{h}(X)^{\mathbb{C}^{*}}$. This is a sheaf
of algebras on $T^{*}X$, and one verifies $D_{h}(X)^{\mathbb{C}^{*}}\tilde{=}E_{X}$,
where $E_{X}$ denotes the sheaf of formal differential operators
on $T^{*}X$. Then the functor $M\to\pi_{*}(M)^{\mathbb{C}^{*}}$
provides an equivalence of categories between the category of $\mathbb{C}^{*}$-equivariant
coherent $D_{h}(X)$ modules, and that of coherent $D_{X}$ modules
(this fact, or, rather, its analytic analogue, is discussed in \cite{key-7},
section 2.3.3). 

In fact, $\mathbb{C}^{*}$-equivariant modules will play a major role
in this paper. At this point, we note a few facts: we will consider
only $\mathbb{C}^{*}$ actions that act on $h$ as some $t^{n}$ for
$n\geq0$, and we can assume that $n=1$ (if not, we can simply replace
the ground field $\mathbb{C}((h))$ by $\mathbb{C}((h^{1/n}))$, base
change everything to this field, and demand that $\mathbb{C}^{*}$
acts on $h^{1/n}$ as $t$). 
\begin{lem}
\label{lem:C-star-actions} Given such an action on a $D_{h}$-module
$M$, for any $U\subseteq X$ which is affine, open and $\mathbb{C}^{*}$-invariant,
$\Gamma(U,M)\neq0$ implies $\Gamma(U,M)^{\mathbb{C}^{*}}\neq0$.
Further, if $M$ is coherent, $\Gamma(U,M)^{\mathbb{C}^{*}}$ is generated
by finitely many sections as a $\Gamma(U,D)^{\mathbb{C}^{*}}$ module. 
\end{lem}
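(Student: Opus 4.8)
The plan is to exploit the fact that, because $\mathbb{C}^*$ acts on $M$ compatibly with its action on $h$ (as $t$), the sections $\Gamma(U,M)$ over a $\mathbb{C}^*$-invariant affine open $U$ carry a $\mathbb{C}^*$-action, hence decompose — at least formally — into weight spaces. Here $M$ is a $D_h$-module, i.e. a $D_h(X)(0)[h^{-1}]$-module; I would first pass to a $\mathbb{C}^*$-stable lattice $M(0)$ so that $\Gamma(U,M)=\Gamma(U,M(0))[h^{-1}]$ and $\Gamma(U,M(0))$ is an $h$-complete $\mathbb{C}[[h]]$-module with a compatible $\mathbb{C}^*$-action, the compatibility meaning that $t$ scales $h$ by $t$. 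The key point is that the $\mathbb{C}^*$-action on $\Gamma(U,M(0))/h\Gamma(U,M(0))$, which is a module over the ordinary coordinate ring $\Gamma(U,O_{T^*X})$ (or rather $\Gamma(U, D_h(X)(0))/h$) with its $\mathbb{C}^*$-action, is algebraic, so that quotient decomposes as an honest direct sum of finite-dimensional weight spaces; I can then lift weight vectors through the $h$-adic filtration.

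First I would establish the non-vanishing statement. Suppose $\Gamma(U,M)\neq 0$; replacing $M$ by $M(0)$ and dividing by the maximal power of $h$ dividing all sections (using that $M(0)$ has bounded $h$-torsion, or just passing to a fresh lattice), I may assume $\Gamma(U,M(0))/h\Gamma(U,M(0))\neq 0$. This is a nonzero $\mathbb{C}^*$-equivariant coherent sheaf on the affine $U$, so its global sections form a nonzero graded module over the graded ring $\Gamma(U,O)$; in particular it has a nonzero element $\bar v$ of some weight $k$. Now I lift: choose $v_0\in\Gamma(U,M(0))$ mapping to $\bar v$. Its weight-$k$ component $v_0'$ (extracted via the $\mathbb{C}^*$-action, which makes sense after checking the action on $\Gamma(U,M(0))$ is "pro-algebraic" — a limit of algebraic actions on the $M(0)/h^n$) still maps to $\bar v$, hence $v_0'\notin h\Gamma(U,M(0))$, and $v_0'$ is $\mathbb{C}^*$-semiinvariant of weight $k$. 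Then $h^{-k}v_0'\in\Gamma(U,M)$ (if $k\geq 0$; if $k<0$ multiply by a suitable power of $h$ in $\Gamma(U,D)$ instead) is genuinely $\mathbb{C}^*$-invariant and nonzero, since $h$ has weight $1$ and $h$ is not a zero-divisor on the $h$-torsion-free lattice. Hence $\Gamma(U,M)^{\mathbb{C}^*}\neq 0$.

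For the finite generation statement, assume $M$ is coherent with $\mathbb{C}^*$-stable lattice $M(0)$, $U$ affine open $\mathbb{C}^*$-invariant. Then $\Gamma(U,M(0))/h$ is a coherent $\Gamma(U,O_{T^*X})$-module, $\mathbb{C}^*$-equivariantly, so it is generated by finitely many homogeneous elements $\bar v_1,\dots,\bar v_r$ of weights $k_1,\dots,k_r$. Lift each to a semiinvariant $v_i\in\Gamma(U,M(0))$ of weight $k_i$ as above (again using that $U$ is affine, so $\Gamma(U,-)$ is exact on quasicoherent sheaves and on the tower $M(0)/h^n$, to perform the lift compatibly). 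A standard $h$-adic Nakayama / successive-approximation argument shows $v_1,\dots,v_r$ generate $\Gamma(U,M(0))$ as a $\Gamma(U,D_h(X)(0))$-module: any section can be written modulo $h$ in terms of the $v_i$ with coefficients in $\Gamma(U,D_h(X)(0))/h$, those coefficients lift to $\Gamma(U,D_h(X)(0))$, and one iterates, convergence being guaranteed by $h$-completeness. Now invert $h$ and take $\mathbb{C}^*$-invariants: because every module in sight decomposes over $\mathbb{C}^*$ and $h$ is a weight-$1$ unit in $\Gamma(U,D)$, multiplying the $v_i$ by appropriate powers $h^{-k_i}$ produces genuine invariants $w_i=h^{-k_i}v_i\in\Gamma(U,M)^{\mathbb{C}^*}$, and a weight-by-weight bookkeeping shows the $w_i$ generate $\Gamma(U,M)^{\mathbb{C}^*}$ over $\Gamma(U,D)^{\mathbb{C}^*}$: given an invariant section, expand it in the $v_i$ over $\Gamma(U,D)$ and extract from each coefficient its component of the complementary weight $k_i$, which lands in $\Gamma(U,D)^{\mathbb{C}^*}$ after the same $h$-power normalization.

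The main obstacle, I expect, is the passage from the algebraicity of the $\mathbb{C}^*$-action on each finite reduction $M(0)/h^nM(0)$ to a usable "$\mathbb{C}^*$ acts pro-algebraically, so I can extract weight components" statement on the $h$-complete module $\Gamma(U,M(0))$ itself — and, relatedly, making the weight-component extraction commute with the $h$-adic lifting so that the successive-approximation argument stays inside fixed weight spaces. One must be careful that taking $\mathbb{C}^*$-invariants (or fixed-weight components) is exact here; this is where the hypothesis that we only consider $\mathbb{C}^*$-actions scaling $h$ with a nonnegative weight, normalized to weight $1$, is used — it guarantees the weights occurring in each $M(0)/h^nM(0)$ in a fixed residual weight class are bounded, so the relevant weight spaces are finite-dimensional and the inverse limit behaves well. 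Everything else is the standard graded-Nakayama and $h$-adic completeness toolkit already invoked elsewhere in the paper.
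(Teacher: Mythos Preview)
Your proposal is correct and follows essentially the same approach as the paper: pass to a $\mathbb{C}^*$-stable lattice, lift $\mathbb{C}^*$-homogeneous generators of $\Gamma(U,M(0))/h$ through the $h$-adic filtration (the paper phrases this via $\mathbb{C}^*$-equivariant splittings of the surjections $M(0)/h^{n+1}\to M(0)/h^n$, which is exactly your weight-component extraction), then multiply by the appropriate power of $h$ to obtain invariants, and invoke Nakayama for finite generation. Your treatment is in fact more detailed than the paper's on the pro-algebraicity and weight-bookkeeping issues.
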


\begin{proof}
These facts are proved by using the definition of equivariance given
above. To show the existence of an invariant section, we assume WLOG
that $M$ is coherent, using that, over an affine open set, a quasicoherent
$\mathbb{C}^{*}$-equivariant module is a limit of its equivariant
coherent submodules. So, each surjection $\Gamma(U,M(0))/h^{n+1}\Gamma(U,M(0))\to\Gamma(U,M(0))/h^{n}\Gamma(U,M(0))$
admits a $\mathbb{C}^{*}$-invariant splitting. Therefore, we can
choose $\mathbb{C}^{*}$-homogeneous sections in $\Gamma(U,M(0))$
whose images generate $\Gamma(U,M(0))/h\Gamma(U,M(0))$. So $\Gamma(U,M)\neq0$
implies that $h^{n}s\neq0$ for all $n\in\mathbb{Z}$, for at least
one of these sections $s$; and then choosing the correct $n$ gives
a $\mathbb{C}^{*}$-invariant section. The fact about finitely generated
modules follows from the Nakayama lemma and the fact that $\Gamma(U,M(0))/h\Gamma(U,M(0))$
is finitely generated over $\Gamma(U,D_{h}(0))/h\Gamma(U,D_{h}(0))$
for coherent modules by writing out the action of $D_{h}$ on $M$. 
\end{proof}

\section{\label{sec:Hamiltonian-reduction}Hamiltonian Reduction for Quantizations}

Let $H$ be a connected affine algebraic group, with Lie algebra $\mathfrak{h}$,
and suppose that $H$ acts on the algebraic variety $X$. Then the
induced action of $H$ on $T^{*}X$ is Hamiltonian (see \cite{key-2}
page 44 for details) and so there exists an $H$-equivariant moment
map $\mu:T^{*}X\to\mathfrak{h}^{*}$. In fact, we can describe explicitly
the comorphism on functions as follows: any $y\in\mathfrak{h}$ gives
rise to an algebraic vector field on $X$, denoted $\xi_{y}$, which
in turn gives rise to a regular function on $T^{*}X$, called $f_{y}$,
via the natural pairing of tangent and cotangent vectors. This map
extends uniquely to an algebra morphism $O(\mathfrak{h}^{*})=S\mathfrak{h}\to O(T^{*}X).$ 

Let $\chi\in\mathfrak{h}^{*}$ be a character (i.e., suppose that
$\chi([\mathfrak{h},\mathfrak{h}])=0$). Then $\mu^{-1}(\chi)$ is
an $H$ invariant closed subvariety of $T^{*}X$. Suppose that $\mu$
has surjective differential at all points in $\mu^{-1}(\chi)$, so
that $\mu^{-1}(\chi)$ is a smooth subvariety. Suppose further that
there exists a smooth quotient $\mu^{-1}(\chi)/H$ in the sense that
there exists a morphism $p:\mu^{-1}(\chi)\to\mu^{-1}(\chi)/H$ making
$\mu^{-1}(\chi)$ a principal $H$-bundle (in the Zariski topology)
over $\mu^{-1}(\chi)/H$ , and we assume that this quotient admits
a symplectic form compatible with the reduction. Then this quotient
variety is called the Hamiltonian reduction of $T^{*}X$ with respect
to $\chi$. 

In this section we're going to explain how the Hamiltonian reduction
procedure for quantizations will allow us to obtain quantizations
of various spaces $\mu^{-1}(\chi)/H$. In fact, the procedure we will
discuss sometimes works for quantizations of Poisson varieties as
well; so we move now to that level of generality; however, the case
discussed above is the key one to keep in mind.

Suppose now that $Y$ is a smooth Poisson variety, with quantization
$O_{h}$. Suppose that $Y$ admits an action of $H$, and that $O_{h}$
is $H$-equivariant in the sense of \ref{def:Equivariance-for-O}.
Suppose further that $O_{h}$ admits a quantum comoment map; i.e.,
there is map of algebras\footnote{To make sense of this we regard $U_{h}(\mathfrak{h})(0)$ as a constant
sheaf on $Y$.} $\rho:U_{h}(\mathfrak{h})(0)\to O_{h}$ satisfying the condition
of \ref{def:Ham-Reduc-for-quantized}. In particular there is
an $H$-equivariant map $\mu:Y\to\mathfrak{h}$ (which arises by taking
$h\to0$ in the quantum comoment map); in the situation discussed
above of an action of $H$ on $X$ and $O_{h}=D_{h}$ (on $T^{*}X$)
then these criteria are fulfilled. The quantum comoment map takes
$x\in\mathfrak{h}$ to the vector field $\xi_{x}$, considered as
a section of $D_{h}$. 

Let $\chi:U_{h}(\mathfrak{h})(0)\to\mathbb{C}[[h]]$ be a continuous
character. Such a map is determined by its restriction to $\mathfrak{h}$,
so specifying $\chi$ is the same as specifying a $\mathbb{C}$-linear
map, which we will abusively denote $\chi:\mathfrak{h}/[\mathfrak{h},\mathfrak{h}]\to\mathbb{C}[[h]]$.
Let $\chi_{0}$ denote the reduction mod $h$ of $\chi$; note that
even if $\chi$ is nontrivial then $\chi_{0}=0$ is possible. The
sheaf of $O_{h}$-modules $L_{\chi}:=O_{h}/\sum_{x\in\mathfrak{h}}O_{h}\cdot(\xi_{x}-\chi(x))$
is then supported on $\mu^{-1}(\chi_{0})$. Suppose, as above, that
$\mu^{-1}(\chi_{0})/H$ exists, and let $p:\mu^{-1}(\chi_{0})\to\mu^{-1}(\chi_{0})/H$
denote the quotient map; which we assume to be an $H$-torsor. Then
we make the 
\begin{defn}
The sheaf $p_{*}(L_{\chi})^{H}$ (which, under the assumption that
$L_{\chi}$ is $h$-torsion free, is a sheaf of algebras on $\mu^{-1}(\chi_{0})/H$
by \ref{cor:Ham-reduction-group}) is called the quantum Hamiltonian
reduction of $O_{h}$, with respect to $\chi$. 
\end{defn}

At this level of generality, it seems that there is not much we can
say about about $p_{*}(L_{\chi})^{H}$. For instance, it is not obvious\footnote{We thank the referee for pointing this out.}
whether or not it is actually a quantization of $\mu^{-1}(\chi_{0})/H$.
However, in the two main cases of interest in this paper, we shall
see that this is indeed the case. 

Now let us turn to the relationship between equivariant modules and
Hamiltonian reduction. 

Let $M(0)$ be an $H$-equivariant coherent $O_{h}$-module, in the
sense of the previous section. Let $\beta:\mathfrak{h}\to\text{End}_{\mathbb{C}[[h]]}(M(0))$
denote the derivative of the action map of $H$ on $O_{h}$. We note
that, in the case that $Y=T^{*}X$ (with $H$ acting as above) and
$O_{h}=D_{h}(X)=M(0)$, then $h\cdot\beta(x)$ is the map $[\xi_{x},\cdot]$,
where $\xi_{x}$ is the vector field associated to $x$ via the action,
considered as a section of $D_{h}$. 
\begin{defn}
\label{def:twist}Let $\chi:\mathfrak{h}/[\mathfrak{h},\mathfrak{h}]\to\mathbb{C}[[h]]$
be a $\mathbb{C}$-linear map. We say that $M(0)$ has twist $\chi$
if the map $h\beta(x)-\rho(x)$ is equal to $\chi(x)$ on $M(0)$,
for all $x\in\mathfrak{h}$. 
\end{defn}

We denote by $Mod_{\chi}^{coh,H}(O_{h})$ (resp. $Mod_{\chi}^{qcoh}(O_{h})$)
the category of coherent (resp. quasicoherent) modules over $O_{h}$
with twist $\chi$; this is a full abelian subcategory of the category
of all equivariant $O_{h}$-modules. Observe that $L_{\chi}$ has
twist $\chi$, by the definition of the comoment map. Furthermore,
unpacking the definition yields $\mathcal{H}om(L_{\chi},M(0))=M(0)^{\mathfrak{h}}$
for any $h$-torsion-free $M(0)$ with twist $\chi$, and since $H$
is connected this means $\mathcal{H}om(L_{\chi},M(0))=M(0)^{H}$ as
well. 

Then we have the following 
\begin{prop}
\label{prop:Reduction-equivalence}In the situation above, suppose
that $L_{\chi}$is $h$-torsion free. Suppose further that $p_{*}(L_{\chi})^{H}$
is a quantization of the smooth variety $Z=\mu^{-1}(\chi_{0})/H$,
and suppose we have
\[
\mathcal{E}xt_{O_{h}}^{i}(L_{\chi}[h^{-1}],L_{\chi}[h^{-1}])=0
\]
for $i>0$. Then

1) We have equivalences of categories $Mod^{?}(p_{*}(L_{\chi})^{H}[h^{-1}])\tilde{\to}Mod_{\chi}^{?,H}(O_{h}[h^{-1}])$
(where $?$ stands for coherent or quasicoherent) given by 
\[
M\to\mathbb{H}^{\perp}(M):=L_{\chi}[h^{-1}]\otimes_{p^{-1}(p_{*}(L_{\chi})^{H})[h^{-1}]}p^{-1}M
\]
 for $M$ in $Mod^{?}(p_{*}(L_{\chi})^{H})$, and $S\to\mathbb{H}(S)=p_{*}(\mathcal{H}om(L_{\chi}[h^{-1}],S))^{H}$
for $S$ in $Mod_{\chi}^{?,H}(O_{h})$ (where in the second functor
we use $\mathcal{H}om$ to mean sheaf hom).

2) Suppose that there exists a $\mathbb{C}^{*}$-action on $X$ that
preserves $\mu^{-1}(\chi_{0})$ and that this drops to an action on
$Z$ in such a way that $p_{*}(L_{\chi})^{H}$ is $\mathbb{C}^{*}$-equivariant.
Suppose that $\mathbb{C}^{*}$ acts on $H$ in such a way that $H\rtimes\mathbb{C}^{*}$
acts on $X$. Then we have an equivalence 
\[
Mod^{?,\mathbb{C}^{*}}(p_{*}(L_{\chi})^{H})\tilde{\to}Mod_{\lambda}^{?,\mathbb{C}^{*}\times H}(O_{h})
\]
which is given by the same formulae. 
\end{prop}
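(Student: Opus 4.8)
The plan is to establish part 1 first as a statement about sheaves of $\mathbb{C}((h))$-linear algebras, and then to show that everything in the proof can be run $H\rtimes\mathbb{C}^*$-equivariantly, so that part 2 follows by carrying a $\mathbb{C}^*$-structure through the same construction. To prove part 1, I would first verify that the two functors $\bbH$ and $\Hamp$ are well-defined: the key input is the computed identity $\sHom(L_\chi, M(0)) = M(0)^{\mathfrak h} = M(0)^H$ for $h$-torsion-free $M(0)$ with twist $\chi$, which shows that $\bbH(S) = p_*(\sHom(L_\chi[h^{-1}], S))^H$ lands in $p_*(L_\chi)^H[h^{-1}]$-modules (using that $L_\chi$ has twist $\chi$ so $\sEnd(L_\chi[h^{-1}]) \cong p^{-1}(p_*(L_\chi)^H)[h^{-1}]$ acts on the right), and preserves coherence because $L_\chi$ is coherent and the hypotheses make $p_*(L_\chi)^H$ a quantization (hence locally noetherian). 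Conversely $\Hamp(M) = L_\chi[h^{-1}] \otimes_{p^{-1}(p_*(L_\chi)^H)[h^{-1}]} p^{-1}M$ is manifestly an $O_h[h^{-1}]$-module, and one checks it has twist $\chi$ because $L_\chi$ does (the twist condition is linear in the module and $p^{-1}M$ is acted on $p^{-1}(p_*(L_\chi)^H)[h^{-1}]$-linearly), and it is coherent when $M$ is since $L_\chi[h^{-1}]$ is.

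Next I would prove the adjunction/equivalence. The natural transformations are the obvious ones: the unit $M \to \bbH\Hamp(M) = p_*(\sHom(L_\chi[h^{-1}], L_\chi[h^{-1}] \otimes p^{-1}M))^H$ and the counit $\Hamp\bbH(S) \to S$. To see these are isomorphisms I would work locally over $Z = \mu^{-1}(\chi_0)/H$: pulling back along the torsor $p$, the sheaf $L_\chi[h^{-1}]$ becomes, after descent, a $(O_h[h^{-1}], p^{-1}(p_*(L_\chi)^H)[h^{-1}])$-bimodule realizing a Morita-type equivalence between $\Qcoh(p_*(L_\chi)^H[h^{-1}])$ and the twist-$\chi$ equivariant category. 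The crucial point where the $\Ext$-vanishing hypothesis $\sEnd^i_{O_h}(L_\chi[h^{-1}], L_\chi[h^{-1}]) = 0$ for $i>0$ enters is in checking that $\sHom(L_\chi[h^{-1}], -)$ is exact on the relevant category and commutes with the tensor product defining $\Hamp$, i.e. that $L_\chi[h^{-1}]$ is a projective generator in an appropriate derived sense; this is exactly the analogue of the fact in Beilinson–Bernstein that one needs higher cohomology vanishing of the relevant line bundle. One then checks that $\Hamp(M)$ is $H$-equivariant with twist $\chi$ (the $H$-action coming from the $H$-equivariant structure on $L_\chi$ tensored with the trivial action on $p^{-1}M$, which descends because $M$ lives on the quotient), and that the composites are naturally isomorphic to the identity by a local computation on an $H$-torsor trivialization, where $L_\chi$ becomes free of rank one over $p_*(L_\chi)^H$.

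For part 2, the point is that the $\mathbb{C}^*$-action on $X$ preserving $\mu^{-1}(\chi_0)$ and commuting appropriately with $H$ (so that $H \rtimes \mathbb{C}^*$ acts) makes $L_\chi$ into an $H \rtimes \mathbb{C}^*$-equivariant sheaf, and hence both functors $\bbH$ and $\Hamp$ carry $\mathbb{C}^*$-equivariant structures: given a $\mathbb{C}^*$-equivariant $p_*(L_\chi)^H$-module $M$, the tensor product $L_\chi[h^{-1}] \otimes p^{-1}M$ inherits a diagonal $\mathbb{C}^*$-action, and combined with the already-present $H$-equivariance (from part 1) one gets an $H \rtimes \mathbb{C}^*$-equivariant module with twist $\chi$; conversely $\bbH$ of such a module is $\mathbb{C}^*$-equivariant because taking $H$-invariants and $p_*$ are compatible with the residual $\mathbb{C}^*$-action. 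Since the unit and counit of part 1 are built from canonical maps, they are automatically $\mathbb{C}^*$-equivariant, so the equivalence of part 1 upgrades verbatim. I expect the main obstacle to be the exactness/compatibility argument in the middle of part 1 — precisely pinning down why the $\sEnd^i$-vanishing hypothesis is exactly what is needed for $\sHom(L_\chi[h^{-1}], -)$ and $L_\chi[h^{-1}] \otimes -$ to be mutually inverse on the nose (rather than just derived-inverse), and handling the $h$-torsion-freeness bookkeeping so that the identity $\sHom(L_\chi, M(0)) = M(0)^H$ can be applied after inverting $h$; the equivariance bookkeeping in part 2 should then be routine once part 1 is set up functorially.
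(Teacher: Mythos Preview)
Your overall shape (set up the adjunction, verify well-definedness, prove unit and counit are isomorphisms, then carry the $\mathbb{C}^*$-structure through) matches the paper's. But the heart of your argument --- the ``local Morita'' step where you claim the Ext-vanishing hypothesis makes $\mathcal{H}om(L_\chi[h^{-1}],-)$ exact and $L_\chi[h^{-1}]$ a projective generator --- is where there is a real gap. The hypothesis gives you $\mathcal{E}xt^i(L_\chi[h^{-1}],L_\chi[h^{-1}])=0$ only, not vanishing against arbitrary objects of $Mod^{?,H}_\chi(O_h[h^{-1}])$; there is no local trivialization available at this level of generality (no formal Darboux lemma is assumed), so your sentence ``on an $H$-torsor trivialization, where $L_\chi$ becomes free of rank one over $p_*(L_\chi)^H$'' is not justified by the hypotheses of the proposition. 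You would need a separate argument to upgrade the single Ext-vanishing to projectivity, and that is essentially as hard as the proposition itself.

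The paper avoids this trap by never claiming $\mathbb{H}$ is exact. Instead it (i) proves $p_*(L_\chi)$ is faithfully flat over $O_{Z,h}:=p_*(L_\chi)^H$ (by showing $p_*(L_\chi)/h$ is locally projective over $O_Z$, using the torsor and the assumption that $O_{Z,h}$ is a quantization), so that $\mathbb{H}^\perp$ is exact and conservative; (ii) extends the Ext-vanishing from $L_\chi[h^{-1}]$ to all $\mathbb{H}^\perp(S)$ by induction on the projective dimension of $S$ over the smooth $Z$; (iii) proves $\mathbb{H}$ is \emph{conservative} (not exact) directly, using that a twist-$\chi$ lattice mod $h$ is an $H$-equivariant coherent sheaf supported on the torsor $\mu^{-1}(\chi_0)$ and hence has nonzero $H$-invariants; and (iv) kills the kernel and cokernel of the counit $\mathbb{H}^\perp\mathbb{H}(M)\to M$ by applying $\mathbb{H}$, using left-exactness plus step (ii) to control the resulting long exact sequence, and then conservativity from (iii). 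Steps (i), (iii), and this kernel/cokernel argument are absent from your outline, and step (ii) is the precise mechanism by which the single Ext-vanishing hypothesis gets leveraged --- not a blanket projectivity statement. Your treatment of part 2 is fine once part 1 is in place.
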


We will prove this momentarily; our argument uses the theory of quantizations
of algebraic varieties as developed in \cite{key-25}, and in particular
ideas of the proof of the Hamiltonian reduction theorem there; though,
due the group action, this case is technically different and is not
implied by the theorem there (or vice versa). Before giving the details,
we would like to comment on the relation with the analogous statement
proved by Kashiwara and Rouquier, namely \cite{key-7}, proposition
2.8. Aside from the technical difference of working in the analytic
topology as opposed to the Zariski (which is ultimately a minor issue),
the main difference between their construction and ours is that we
consider a more general notion of twist then they do. To make the
two situations consistent let us suppose that we are working over
$T^{*}X$ with the quantization $D_{h}$. Let $\lambda\in\mathfrak{h}^{*}$
be a character. Then their definition of $M=M(0)[h^{-1}]$ having
twist $\lambda$ is that $\beta(x)-h^{-1}\xi_{x}$ acts as $\lambda(x)$,
for all $x\in\mathfrak{h}$. Multiplying through by $h$, we see that
this is equivalent, in our notation, to $M(0)$ having twist $\chi=h\lambda$.
In particular, their version of Hamiltonian reduction always ends
up as a quantization of $\mu^{-1}(0)/H$; no other fibres of $\mu$
are possible. As our interest is in Slodowy slices, this would be
too restrictive for our purposes. 

Furthermore, as we shall see below, in the case of $D_{h}$ over $T^{*}X$,
the various conditions of the lemma are automatically satisfied. For
another case in which the conditions hold, see \ref{prop:Ham-reduction-on-g}
below.

Now let us proceed to the 
\begin{proof}
(of \ref{prop:Reduction-equivalence}) We will prove the result
for coherent modules; the quasicoherent case follows formally. In
addition, part $2$ follows by the same method as part $1$, so we
will concentrate on the first case. Note that the functors in question
form an adjoint pair by the usual hom-tensor adjunction. 

Let us denote $p_{*}(L_{\chi})^{H}:=O_{Z,h}$, as it is by assumption
a quantization of $Z$. 

Recall that $L_{\chi}/hL_{\chi}=O_{\mu^{-1}(\chi_{0})}$. Since $R^{i}p_{*}(O_{\mu^{-1}(\chi_{0})})=0$,
we have that $R^{i}p_{*}(L_{\chi})=0$ for $i>0$ as well (by the
argument of lemma 2.12 in \cite{key-7}). Therefore 
\[
p_{*}(L_{\chi})/hp_{*}(L_{\chi})\tilde{=}p_{*}(L_{\chi}/hL_{\chi})
\]
is locally projective as a module over $O_{Z}$ (since $\mu^{-1}(\chi_{0})$
is an $H$-torsor over $Z$). From \cite{key-8} corollary 1.6.7,
we see that $p_{*}(L_{\chi})$ is faithfully flat over $O_{Z,h}$.
Therefore the functor $\mathbb{H}^{\perp}$ is exact and conservative.
It follows also that the map 
\[
S\to\mathbb{H}\circ\mathbb{H}^{\perp}(S)
\]
is an isomorphism for all coherent $S$ over $O_{Z,h}$; indeed, since
$Z$ is smooth we may take locally a finite projective resolution
of $S$; so the result follows from the claim for $O_{Z,h}$ itself;
but this holds by definition. 

Next we claim that $\mathcal{E}xt_{O_{h}}^{i}(L_{\chi}[h^{-1}],\mathbb{H}^{\perp}(S))=0$
for $i>0$ and for any coherent $O_{h,Z}[h^{-1}]$-module $S$. For
this, we use again that $S$ is locally of finite homological dimension.
Therefore we may proceed by induction on the homological dimension;
the base case being exactly the assumption (compare, e.g., the argument
of \cite{key-25}, claim 4.23). 

Now we consider the functor $\mathbb{H}$. We claim that it conservative
on $Mod_{\chi}^{coh,H}(O_{h}[h^{-1}])$. It suffices to prove this
for a lattice $M(0)$. Note that the condition of having twist $\chi$
means that $\mathcal{H}om(L_{\chi},M(0))=M(0)^{H}$. However, $M(0)/h$
is an $H$-equivariant coherent sheaf which is set-theoretically supported
on the torsor $\mu^{-1}(\chi_{0})$, so $M(0)/h\neq0$ implies that
$M(0)$ has a nontrivial space of $H$-invariants, and that, in fact,
$M(0)/hM(0)$ is the pullback of a sheaf on $\tilde{Z}$, an infinitesimal
thickening of $Z$. 

Proceeding by induction, consider the sequence 
\[
0\to M(0)/h\to M(0)/h^{n}\to M(0)/h^{n-1}\to0
\]
As there are no higher derived $H$-invariants of $M(0)/h$ by the
above, one concludes that $M(0)/h^{n}$ has a nonzero space of $H$-invariants,
which surjects onto the $H$-invariants of $M(0)/h^{n-1}$. Taking
the inverse limit, we obtain the claim. 

We now finish the proof by showing that the map 
\[
\mathbb{H}^{\perp}\circ\mathbb{H}(M)\to M
\]
is an isomorphism. Write $K$ and $C$ for the kernel and cokernel
of this map, respectively. By the adjunction between $\mathbb{H}$
and $\mathbb{H}^{\perp}$, we have that 
\[
\mathbb{H}\circ\mathbb{H}^{\perp}\circ\mathbb{H}(M)\to\mathbb{H}(M)
\]
is an isomorphism. Since $\mathbb{H}$ is left exact (it is a hom
functor), this forces $\mathbb{H}(K)=0$, so $K=0$ as $\mathbb{H}$
is conservative. Thus we have that 
\[
0\to\mathbb{H}^{\perp}\circ\mathbb{H}(M)\to M\to C\to0
\]
is a short exact sequence; note that this implies $\mathbb{H}^{\perp}\circ\mathbb{H}(M)$
is coherent over $O_{h}[h^{-1}]$; from this it follows that $\mathbb{H}(M)$
is coherent over $O_{Z,h}$. Now, apply $\mathbb{H}$ to the exact
sequence above, and use the fact that $\mathcal{E}xt_{O_{h}}^{1}(L_{\chi}[h^{-1}],(\mathbb{H}^{\perp}\circ\mathbb{H}(M)))=0$
as proved above, to see that $\mathbb{H}(C)=0$, so that $C=0$ and
the result follows. 
\end{proof}
Now we need to explain that the hypotheses actually hold when we want
them to: 
\begin{lem}
\label{lem:Good-Conditions-For-T^*}Suppose that, in the situation
of \ref{prop:Reduction-equivalence}, we have that $Y=T^{*}X$,
$O_{h}=D_{h}$, and the Hamiltonian group action of $H$ comes from
an action of $H$ on $X$. Suppose also that $\mu^{-1}(\chi_{0})/H$
is a smooth symplectic variety. Then the hypotheses of \ref{prop:Reduction-equivalence}
are satisfied. 
\end{lem}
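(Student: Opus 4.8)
The plan is to verify, in turn, the three hypotheses of \ref{prop:Reduction-equivalence}: that $L_\chi$ is $h$-torsion free, that $p_*(L_\chi)^H$ is a quantization of $Z=\mu^{-1}(\chi_0)/H$, and that $\mathcal{E}xt^i_{D_h}(L_\chi[h^{-1}],L_\chi[h^{-1}])=0$ for $i>0$. All three rest on one construction: a quantized Chevalley--Eilenberg (Koszul) resolution of $L_\chi$. Set $K_\bullet=D_h(X)(0)\otimes_{\mathbb C}\Lambda^\bullet\mathfrak h$, a finite complex of free $D_h(X)(0)$-modules, with the $\chi$-twisted differential assembled from right multiplication by the sections $\xi_x-\chi(x)$ together with the term $h\cdot[x,y]$ forced by $[\xi_x,\xi_y]=h\,\xi_{[x,y]}$. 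Because $\chi$ is a character and the comoment map sends $x$ to $\xi_x$, the reduction $K_\bullet/hK_\bullet$ is the classical Koszul complex on the functions $f_x-\chi_0(x)$ on $T^*X$.

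\emph{Torsion-freeness and the resolution.} The assumption that $p\colon\mu^{-1}(\chi_0)\to Z$ is an $H$-torsor forces all stabilizers of the $H$-action on $\mu^{-1}(\chi_0)$ to vanish, hence $\mu$ has surjective differential along $\mu^{-1}(\chi_0)$, which is therefore a smooth local complete intersection of codimension $\dim\mathfrak h$ in a neighbourhood of itself; thus the $f_x-\chi_0(x)$ form a regular sequence there, and the classical Koszul complex resolves $\mathcal O_{\mu^{-1}(\chi_0)}=L_\chi/hL_\chi$ (it is also exact away from $\mu^{-1}(\chi_0)$). The standard $h$-adic Nakayama argument --- each $K_j$ is $h$-flat and $h$-adically complete, and $K_\bullet/hK_\bullet$ has homology only in degree $0$ --- then shows that $K_\bullet\to L_\chi$ is a finite free resolution over $D_h(X)(0)$ and that $L_\chi$ is $h$-flat, in particular $h$-torsion free.

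\emph{The quantization.} From $L_\chi/hL_\chi=\mathcal O_{\mu^{-1}(\chi_0)}$, and since a torsor under an affine group is an affine morphism, $R^ip_*\mathcal O_{\mu^{-1}(\chi_0)}=0$ for $i>0$; filtering $L_\chi$ by powers of $h$ and passing to the limit (the argument of lemma $2.12$ in \cite{key-7}) gives $R^ip_*L_\chi=0$ for $i>0$ and $p_*L_\chi/h\,p_*L_\chi\cong p_*\mathcal O_{\mu^{-1}(\chi_0)}$, locally free over $\mathcal O_Z$ by torsor descent. Taking $H$-invariants is exact on $H$-equivariant quasicoherent sheaves coming from $Z$ (faithfully flat descent along $p$), so $p_*(L_\chi)^H/h\,p_*(L_\chi)^H\cong\mathcal O_Z$; combined with $h$-flatness and $h$-completeness this presents $p_*(L_\chi)^H$ --- an algebra by \ref{cor:Ham-reduction-group} --- as an $h$-flat complete deformation of $\mathcal O_Z$ whose induced bracket on $\mathcal O_Z$ is the Hamiltonian reduction of the standard Poisson bracket of $T^*X$, i.e.\ (by the hypothesis on $Z$) the symplectic one. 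Hence $p_*(L_\chi)^H$ is a quantization of $Z$.

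\emph{Vanishing of the higher $\mathcal{E}xt$'s, and the main difficulty.} Applying $\mathcal{H}om_{D_h}(-,L_\chi)$ to $K_\bullet$ identifies $R\mathcal{H}om_{D_h}(L_\chi,L_\chi)$ with the twisted Chevalley--Eilenberg cochain complex $\big(L_\chi\otimes_{\mathbb C}\Lambda^\bullet\mathfrak h^*,\ \delta\big)$; the twist condition of \ref{def:twist}, which is precisely how $L_\chi$ is built, makes $\delta=h\cdot\widetilde\delta$ with $\widetilde\delta$ the genuine Chevalley--Eilenberg differential for the integrable $\mathfrak h$-action $\beta$ on $L_\chi$. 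Since $L_\chi$ is $h$-flat, after inverting $h$ the higher $\mathcal{E}xt$'s become the Lie algebra cohomology $H^{i}(\mathfrak h,L_\chi[h^{-1}])$, and the task is to see these vanish for $i>0$. I would argue this locally near $\mu^{-1}(\chi_0)$: since the $H$-action is free there, the local normal form for moment maps identifies a neighbourhood of $\mu^{-1}(\chi_0)$, $H$-equivariantly and symplectically, with a neighbourhood of $\mu_{T^*H}^{-1}(\chi_0)$ times a symplectic slice $Z''$; quantizing, $D_h(X)$ looks locally like $D_h(H)\,\widehat{\otimes}\,\mathcal O_{Z'',h}$ and $L_\chi$ like $L_\chi^{T^*H}\,\widehat{\otimes}\,\mathcal O_{Z'',h}$, so a K\"unneth computation reduces everything to $H^{i}(\mathfrak h,L_\chi^{T^*H}[h^{-1}])$ for the group alone, where $L_\chi^{T^*H}[h^{-1}]\cong\mathcal O_H((h))$ with $\mathfrak h$ acting through its invariant vector fields. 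The required vanishing is then the microlocal Poincar\'e-lemma statement available in this framework, and I would quote the corresponding result from \cite{key-7} (whose proof, as noted in the text, uses only that $D_h$ is a quantization) or from \cite{key-25}. This is the delicate step, since it is exactly here that the poor behaviour of the Zariski topology noted above intervenes; the care lies in checking that the statements being imported are of the type that genuinely carries over to the algebraic setting.
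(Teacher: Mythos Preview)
Your argument is correct and close in spirit to the paper's, but the execution differs in one important way that also dissolves the worry you flag at the end. The paper does not attempt to produce a Zariski (or analytic) neighbourhood in which a symplectic normal form holds; instead it passes immediately to the \emph{formal completion} $\widehat{D}_{h,x}$ at each point $x\in\mu^{-1}(\chi_0)$. There one has the formal Darboux theorem (\cite{key-5}, lemma~1.5), and the local slice description of \cite{key-7}, lemma~2.7, gives
\[
\widehat{D}_{h}\ \cong\ \widehat{W}_{h}(\mathfrak h\oplus\mathfrak h^{*})\ \widehat{\otimes}\ \widehat{W}_{h,Z},
\]
under which the completed ideal $\sum D_h\cdot(\xi_x-\chi(x))$ becomes $\widehat{D}_h\cdot\mathfrak h$. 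All three hypotheses are then read off from this single isomorphism: $\widehat{L}_\chi=\widehat{D}_h/\widehat{D}_h\cdot\mathfrak h$ is visibly $h$-flat; $\widehat{L}_\chi^{H}\cong\widehat{W}_{h,Z}$ gives the quantization statement after checking $p_*(L_\chi)^H/h\to O_Z$ is an isomorphism at every formal stalk; and the $\mathcal{E}xt$ vanishing reduces, after formal completion, to the triviality of the de~Rham cohomology of the affine space $\mathfrak h^{*}$ (via \cite{key-25}, lemma~4.14). Since $\mathcal{E}xt$ sheaves and $h$-torsion can be tested on formal completions, this suffices.

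Your global Koszul/Chevalley--Eilenberg resolution is a perfectly good alternative for the first two hypotheses and is arguably more explicit; it also makes the identification $R\mathcal{H}om_{D_h}(L_\chi,L_\chi)\simeq (L_\chi\otimes\Lambda^\bullet\mathfrak h^*,\,h\widetilde\delta)$ transparent. Where you hesitate---the passage to a local model for the $\mathcal{E}xt$ computation and the ``poor behaviour of the Zariski topology''---the remedy is exactly the paper's: replace ``neighbourhood'' by ``formal completion at a point'' and invoke the formal Darboux lemma. That step is purely algebraic and does not require importing any analytic statement from \cite{key-7}; once you commit to it, your argument and the paper's converge on the same endgame (de~Rham cohomology of $\mathfrak h^*$).
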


\begin{proof}
Essentially, this is a consequence of the formal Darboux lemma for
the quantization $D_{h}$. Let $x\in\mu^{-1}(\chi_{0})$, and denote
by $\widehat{D}_{h,x}$ the formal completion of $D_{h}$ at $x$,
and $\widehat{O}_{x}$ the formal completion of the structure sheaf
of $T^{*}X$ at $x$; the formal neighborhood of $x$ in $T^{*}X$
is then $\text{Spf}(\widehat{O}_{x})$. Applying the local description
of the Hamiltonian reduction in the smooth case (as recalled in \cite{key-7},
lemma 2.7), we have an isomorphism of formal symplectic schemes 
\[
\text{Spf}(\widehat{O}_{x})\tilde{=}\text{Spf}(\widehat{O}_{T^{*}H,e}\widehat{\otimes}\widehat{O}_{Z,p(x)})
\]
where by $\widehat{O}_{T^{*}H,e}$ we mean the completion of $O_{T^{*}H}$
at the identity element, and by $\widehat{O}_{Z,p(x)}$ we mean the
completion of $Z$ at $p(x)$; this isomorphism is compatible with
the $H$-action.

Therefore, by the basic structure theory for $\widehat{D}_{h}$ (c.f.
\cite{key-5}, lemma 1.5, or, somewhat more elaborately, \cite{key-25},
proposition 4.9), we have a corresponding isomorphism of quantizations
\[
\widehat{D}_{h}\tilde{=}\widehat{D}_{h,H,e}\widehat{\otimes}\widehat{W}_{h,Z}
\]
where $\widehat{D}_{h,H,e}$ denotes the formal completion of $h$-differential
operators on $H$ at the identity of $T^{*}H$, and $\widehat{W}_{h,Z}$
is a formal quantization of $\widehat{O}_{Z}$; it is necessarily
isomorphic to a formal Weyl algebra by \cite{key-5}, lemma 1.5. By
construction we have 
\[
\widehat{D}_{h,H,e}\tilde{=}\widehat{W}_{h}(\mathfrak{h}\oplus\mathfrak{h}^{*})
\]
where on the right hand side we have the $h$-completed formal Weyl
algebra on the symplectic vector space $\mathfrak{h}\oplus\mathfrak{h}^{*}$.
Thus in total we have 
\[
\widehat{D}_{h}\tilde{=}\widehat{W}_{h}(\mathfrak{h}\oplus\mathfrak{h}^{*})\widehat{\otimes}\widehat{W}_{h,Z}
\]

Under this isomorphism the formal completion of the left ideal $\sum_{x\in\mathfrak{h}}D_{h}\cdot(\xi_{x}-\chi(x))$
is simply given by $\widehat{D}_{h}\cdot\mathfrak{h}$. Therefore,
if we formally complete $L_{\chi}$ at $x$ we have 
\[
\widehat{L}_{\chi}=\widehat{D}_{h}/\widehat{D}_{h}\cdot\mathfrak{h}
\]
which is evidently $h$-torsion free. This shows that $L_{\chi}$
is $h$-flat. Furthermore, the invariants 
\[
\widehat{L}_{\chi}^{H}\tilde{=}\widehat{W}_{h,Z}
\]
satisfy$\widehat{L}_{\chi}^{H}/h\tilde{=}\widehat{O}_{Z}$; from this
it follows that the natural map $p_{*}(L_{\chi})^{H}/h\to O_{Z}$
is an isomorphism, as it is so after each formal completion; i.e.,
$p_{*}(L_{\chi})^{H}$ is a quantization of $O_{Z}$. 

Finally, to obtain the statement on Ext vanishing, we note that it
too can be checked upon formal completion (compare, e.g. \cite{key-25},
theorem 4.17). Using the isomorphism $\widehat{D}_{h}\tilde{=}\widehat{W}_{h}(\mathfrak{h}\oplus\mathfrak{h}^{*})\widehat{\otimes}\widehat{W}_{h,Z}$
we see that the needed vanishing reduced to the triviality of the
de Rham cohomology of the affine space $\mathfrak{h}^{*}$ (use \cite{key-25},
lemma 4.14), whence the result.
\end{proof}
\begin{rem}
As this proof is fundamentally local, this result carries over without
change to the case where $O_{h}$ is a quantization of $T^{*}X$,
which is locally (but not necessarily globally) isomorphic to $D_{h}$.
We will encounter such quantizations in the next two sections. 
\end{rem}

Finally we finish off this section with
\begin{example}
Suppose $H$ is a connected, affine algebraic group, $B\leq H$ a
connected algebraic subgroup, with $Lie(B)=\mathfrak{b}^{*}$. Then
we have the natural left and right actions of $B$ on $H$; which
extend to actions on $T^{*}H$. The moment map (for the right action)
in this case can be described as follows: we have an isomorphism $T^{*}H\tilde{=}H\times\mathfrak{h}^{*}$,
and thus a map to $\mathfrak{h}^{*}$ via $(h,\xi)\to ad^{*}(h)(\xi)$
(this is the moment map for the action of $H$ on $T^{*}H$, denoted
$\mu^{'}$). So the moment map $\mu$ for \textbf{$B$ }is given by
the composition $T^{*}H\to\mathfrak{h}^{*}\to\mathfrak{b}^{*}$. So,
we can describe $\mu^{-1}(\lambda)$ by first noting that $res^{-1}(\lambda)=\lambda+\mathfrak{b}^{\perp}$
(where $\mathfrak{b}^{\perp}$ denotes the annihilator of $\mathfrak{b}$
in $\mathfrak{h}^{*}$). The inverse image of this space under $\mu^{'}$
is the closed subvariety $\{(h,ad^{*}(h^{-1})(\lambda+\mathfrak{b}^{\perp}))\in H\times\mathfrak{h}^{*}\}$.
Now, if $\lambda=0$, then it is immediate that the quotient of this
variety by $B$ is isomorphic to $H\times_{B}\mathfrak{b}^{\perp}\tilde{=}T^{*}(H/B).$
For general $\lambda$, we obtain a algebraic variety $T^{*}(H/B)^{\lambda}$
called a twisted cotangent bundle. We note that if $H=G$, and $B$
is a Borel subgroup, then $G/B$ is the full flag variety. In this
case, we can consider characters of $\mathfrak{b}$ which come from
$\mathfrak{h}$ via extension by zero. If such a $\lambda$ is an
integral character, then the variety $T^{*}(G/B)^{\lambda}$ is isomorphic
to $T^{*}(G/B)$. 
\end{example}

\section{Localization}

We now apply the formalism of the above sections to the finite $W$
algebras. To do this, we'll first discuss (a version of) the classical
Beilinson-Bernstein localization theorem. We start with the cotangent
bundle $T^{*}G$, and the sheaf of asymptotic differential operators
$D_{h}(G)$. As $G$ and $T^{*}G$ are affine, we will need to understand
the global sections of this sheaf. 

We have $\Gamma(D_{h}(G))\tilde{=}O_{G}[[h]]\hat{\otimes}_{\mathbb{C}[[h]]}U_{h}(\mathfrak{g})(0)$;
where the algebra structure on the tensor product is determined by
$(f\otimes x)(g\otimes y)=hf(xg)\otimes y+fg\otimes xy$ for $f,g\in O_{G}$
and $x,y\in\mathfrak{g}$, where by $(xg)$ we mean the action of
$x$ as a left invariant vector field on $g$. Then $D_{h}(G)$ admits
both a left and a right equivariant structure for $G$, by the canonical
actions of the group on the functions and vector fields. 

We shall work with characters $\lambda\in\mathfrak{b}^{*}$, which
satisfy $\lambda(\mathfrak{n})=0$. We can apply the Hamiltonian reduction
procedure as explained above to $D_{h}(G)$ and $D_{h}(G)(0)$, where
we consider the right action of $B$ on $D_{h}(G)$, and we consider
the character $\chi=h\lambda$. In particular, the constructions here
are compatible with those of \cite{key-7}. 

From now on, $X=G/B$. We obtain a sheaf on $T^{*}(X)$, denoted $D_{h}(\lambda-\rho)$
and $D_{h}(\lambda-\rho)(0)$ (where $\rho$ denotes the sum of the
positive roots in $\mathfrak{g}$- this notation will become clear
later). The latter sheaf can also be written as follows: we can consider
the sheaf $O_{X}[[h]]\hat{\otimes}_{\mathbb{C}[[h]]}U_{h}(\mathfrak{g})(0)$
on $T^{*}X$, and we can take the quotient of this sheaf by the ideal
sheaf generated by $\{b-h\lambda(b)|b\in\mathfrak{b}\}$; by the definition
of the reduction procedure and the action of $B$, this is the same
sheaf. Under these notations, the sheaf of asymptotic differential
operators is $D_{h}(-\rho)$. We note that all the sheaves $D_{h}(\lambda)$
are $G$ equivariant with respect to the left $G$ action on $T^{*}(X)$. 

This description allows us to see that there is a ``universal''
sheaf of algebras mapping to each $D_{h}(\lambda)$; in particular,
take the quotient of $O_{X}[[h]]\hat{\otimes}_{\mathbb{C}[[h]]}U_{h}(\mathfrak{g})(0)$
by the ideal sheaf generated by the subspace $\mathfrak{n}$ of $U_{h}(\mathfrak{g})(0)$.
Then the resulting sheaf of algebras, called $D_{h}(\mathfrak{h})(0)$
can be thought of as a Hamiltonian reduction with respect to the maximal
unipotent subgroup $N$ (it is not a quantization of $T^{*}X$, but
rather of an $H$-bundle over it). The algebra $D_{h}(\mathfrak{h}):=D_{h}(\mathfrak{h})(0)[h^{-1}]$
maps (via the obvious quotient map) to each $D_{h}(\lambda).$ 

Now, we have a morphism of algebras $\Phi_{\lambda}(0):U_{h}(\mathfrak{g})(0)\to\Gamma(T^{*}X,D_{h}(\lambda)(0))$
which is defined in the obvious way using the realization of $D_{h}(\lambda)$
given above; this gives then a morphism $\Phi_{\lambda}:U_{h}(\mathfrak{g})\to\Gamma(T^{*}X,D_{h}(\lambda))$. 

Now, $U_{h}(\mathfrak{g})\tilde{=}U(\mathfrak{g})((h))$ as follows:
we have a map $U(\mathfrak{g})\to U_{h}(\mathfrak{g})$ by sending
$x\in\mathfrak{g}$ to $h^{-1}x$ in $U_{h}(\mathfrak{g})$; it is
easy to see that this map is an isomorphism onto the subalgebra generated
by $h^{-1}\mathfrak{g}$. Then we extend this map to $U(\mathfrak{g})[[h]]$
by sending $h$ to $h$ to achieve the above isomorphism. 

This allows us to relate the traditional sheaves of twisted differential
operators (as defined in \cite{key-11}) to the sheaves that we have
defined. So, let $U\subseteq X$ be an open subset, and let $V\subseteq T^{*}X$
be the inverse image of $U$ under the natural projection. Then $D_{h}(\lambda)(V)=\frac{O_{U}[[h]]\hat{\otimes}_{\mathbb{C}[[h]]}U_{h}(\mathfrak{g})(0)}{<b-h(\lambda+\rho)(b)>|_{U}}[h^{-1}]$
, while $D(\lambda)(U)=\frac{O_{U}\otimes U(\mathfrak{g})}{<b-(\lambda+\rho)(b)>}$.
So we get a map $D(\lambda)(U)\to D_{h}(\lambda)(V)$ via the above
map $U(\mathfrak{g})\to U_{h}(\mathfrak{g)}$ and the inclusion $O_{X}\to O_{X}[[h]]$. 

So, if we consider the restriction of $\Phi_{\lambda}$ to $U(\mathfrak{g})$,
we obtain a morphism $\tilde{\Phi}_{\lambda}:U(\mathfrak{g})\to\Gamma(T^{*}X,D(\lambda))$.
Now, by the results in \cite{key-11}, we have that the kernel of
$\tilde{\Phi}_{\lambda}$ is the ideal $U(\mathfrak{g})I_{\lambda}$,
where $I_{\lambda}$ is ideal in the center of $U(\mathfrak{g})$
corresponding to $\lambda$ (here we use the fact that $Spec(Z(\mathfrak{g}))=\mathfrak{h}//W$;
see \cite{key-11} for details), so that we have $\Gamma(T^{*}X,D(\lambda))\tilde{=}U(\mathfrak{g})/U(\mathfrak{g})I_{\lambda}:=U(\mathfrak{g})_{\lambda}$.
Therefore, we see that the kernel of $\Phi_{\lambda}$ contains the
ideal $J_{\lambda}=I_{\lambda}((h))$, and the kernel of $\Phi_{\lambda}(0)$
contains $J_{\lambda}(0):=J_{\lambda}\cap U_{h}(\mathfrak{g})(0)$. 

Now we wish to actually calculate the image and kernel of $\Phi_{\lambda}$.
We start by recalling a very general fact:
\begin{lem}
\label{lem:Basic-CC-lemma}Let $Z$ be an algebraic variety\footnote{We do not even require smoothness for this lemma.}
and let $O_{h}$ be a $\mathbb{C}[[h]]$-algebra on $Z$ that is an
$h$-complete, $h$-flat deformation of $O_{Z}$. Suppose that $R^{i}\Gamma(O_{Z})=0$
for all $i>0$. Then $R^{i}\Gamma(O_{h})=0$ for all $i>0$ and $\Gamma(O_{h})/h\tilde{\to}\Gamma(O_{Z})$.
Further, $\Gamma(O_{h})$ is itself $h$-flat and $h$ -complete.
\end{lem}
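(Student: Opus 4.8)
The plan is to bootstrap the vanishing and surjectivity from the analogous statement for $O_Z$ by filtering $O_h$ by powers of $h$. First I would observe that since $O_h$ is $h$-flat and $h$-complete, for each $n$ the reduction $O_h/h^n O_h$ carries a filtration whose successive quotients $h^j O_h/h^{j+1}O_h$ are all isomorphic (as sheaves of abelian groups, indeed as $O_Z$-modules) to $O_h/hO_h \cong O_Z$. Hence by the long exact sequence in cohomology applied to
\[
0 \to h^{n-1}O_h/h^n O_h \to O_h/h^n O_h \to O_h/h^{n-1}O_h \to 0,
\]
an induction on $n$ using $R^i\Gamma(O_Z)=0$ for $i>0$ gives $R^i\Gamma(O_h/h^n O_h)=0$ for all $i>0$ and all $n$, and also that $\Gamma(O_h/h^n O_h)\to\Gamma(O_h/h^{n-1}O_h)$ is surjective for all $n$. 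This is essentially the content of Lemma~\ref{lem:basic-vanishing}, applied to the coherent $O_h$-module $O_h$ itself (or, if one prefers to stay within the stated hypotheses, proved directly by the same induction).

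Next I would pass to the inverse limit. Since $O_h$ is $h$-complete we have $O_h = \varprojlim_n O_h/h^n O_h$, so I need to commute $\Gamma$ with this inverse limit and control the derived inverse limit. Because the transition maps $\Gamma(O_h/h^{n+1}O_h)\to\Gamma(O_h/h^n O_h)$ are surjective (from the previous paragraph), the Mittag--Leffler condition holds, so $\varprojlim^1$ vanishes on global sections; combined with $R^i\Gamma(O_h/h^n O_h)=0$ for $i>0$, the standard spectral sequence (or the exact sequence relating $R\Gamma$ of an inverse limit to $\varprojlim$ and $\varprojlim^1$ of the $R\Gamma$'s) yields $R^i\Gamma(O_h)=0$ for $i>0$ and $\Gamma(O_h)=\varprojlim_n\Gamma(O_h/h^n O_h)$. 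In particular $\Gamma(O_h)$ is $h$-complete: it is visibly a limit of its own reductions, once one checks $\Gamma(O_h)/h^n\Gamma(O_h)\cong\Gamma(O_h/h^n O_h)$, which follows from the surjectivity of $\Gamma(O_h)\to\Gamma(O_h/h^n O_h)$ (a consequence of the $\varprojlim^1$ vanishing) together with the identification of the kernel as $h^n\Gamma(O_h)$.

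For the flatness statement, it suffices to show $\Gamma(O_h)$ has no $h$-torsion, since an $h$-complete $\mathbb{C}[[h]]$-module with no $h$-torsion is automatically $h$-flat (flatness over the PID $\mathbb{C}[[h]]$, or rather over $\mathbb{C}[h]$ localized, being equivalent to torsion-freeness in the complete setting). So suppose $s\in\Gamma(O_h)$ with $hs=0$. Viewing $s$ as a compatible system $(s_n)$ with $s_n\in\Gamma(O_h/h^n O_h)$, the relation $hs=0$ forces each $s_n$ to lie in the kernel of $\Gamma(O_h/h^{n}O_h)\xrightarrow{\;\cdot h\;}\Gamma(O_h/h^{n+1}O_h)$; using $h$-flatness of $O_h$ (so that multiplication by $h$ on $O_h/h^{n+1}O_h$ has kernel exactly $h^n O_h/h^{n+1}O_h \cong O_Z$) and the surjectivity of global sections established above, one identifies this kernel with $\Gamma(O_Z)\cdot h^n$ inside $\Gamma(O_h/h^{n+1}O_h)$, and then chases the compatibility to conclude $s=0$. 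Finally, the isomorphism $\Gamma(O_h)/h\,\Gamma(O_h)\iso\Gamma(O_Z)$ is the case $n=1$ of $\Gamma(O_h)/h^n\Gamma(O_h)\cong\Gamma(O_h/h^n O_h)$.

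The main obstacle is the interchange of $\Gamma$ with the inverse limit over $n$: one must be careful that $R^1\varprojlim$ of the system $\bigl(\Gamma(O_h/h^n O_h)\bigr)_n$ vanishes, and this is precisely where the inductively-established surjectivity of the transition maps is essential — without it one would only get a spectral sequence, not the clean conclusion. Everything else is a routine diagram chase, but this Mittag--Leffler point is what makes the $h$-completeness and the higher-cohomology vanishing of $\Gamma(O_h)$ actually come out, rather than merely $\varprojlim$-complete with a possible $\varprojlim^1$ defect.
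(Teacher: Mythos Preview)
Your argument is correct. The induction you spell out for $R^i\Gamma(O_h/h^n)=0$ and the surjectivity of the transition maps is exactly the content of \cite{key-7}, lemma~2.12, which the paper simply cites. Where you genuinely diverge is in the treatment of $h$-completeness: the paper invokes the machinery of cohomologically complete complexes from Kashiwara--Schapira (\cite{key-8}, corollary~1.6.2 and lemma~1.5.4), observing that $R\Gamma(O_h)=R\mathrm{Hom}(O_h,O_h)$ is cohomologically complete and concentrated in a single degree, hence $h$-adically complete. Your direct Mittag--Leffler argument is more elementary and self-contained, avoiding that external reference at the cost of a little bookkeeping. One remark: your proof of $h$-torsion-freeness is more elaborate than necessary. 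Since $O_h$ is $h$-flat, multiplication by $h$ is injective on $O_h$, and as $\Gamma$ is left exact it remains injective on $\Gamma(O_h)$; this is the paper's one-line observation. Likewise, once $R^1\Gamma(O_h)=0$ is known, the paper reads off $\Gamma(O_h)/h\,\tilde{\to}\,\Gamma(O_Z)$ directly from the long exact sequence of $0\to O_h\xrightarrow{h}O_h\to O_Z\to 0$, rather than as the $n=1$ case of your completeness computation.
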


\begin{proof}
The vanishing of $R^{i}\Gamma(O_{h})$ follows from the vanishing
of $R^{i}\Gamma(O_{Z})$ by \cite{key-7}, lemma 2.12. Then, from
the short exact sequence 
\[
0\to O_{h}\xrightarrow{h}O_{h}\to O_{Z}\to0
\]
and the vanishing of $R^{1}\Gamma(O_{h})$, we see that $\Gamma(O_{h})/h\tilde{\to}\Gamma(O_{Z})$.
Finally, applying \cite{key-8}, corollary 1.6.2, we see that $R\Gamma(O_{h})=R\text{Hom}(O_{h},O_{h})$
is a cohomologically complete complex of $\mathbb{C}[[h]]$-modules.
Since it is concentrated in a single homological degree and $h$-torsion
free (since $O_{h}$ is), this implies that it is complete in the
$h$-adic topology by \cite{key-8}, lemma 1.5.4.
\end{proof}
From this we conclude
\begin{lem}
\label{lem:Phi-is-iso}The induced map $\Phi_{\lambda}(0):U_{h}(\mathfrak{g})(0)/J_{\lambda}(0)\to\Gamma(T^{*}X,D_{h}(\lambda)(0))$
is an isomorphism of algebras. 
\end{lem}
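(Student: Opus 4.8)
The plan is to deduce this from the general machinery already in place, most importantly Lemma~\ref{lem:Basic-CC-lemma} applied to the quantization $D_h(\lambda)(0)$ of the twisted cotangent bundle. The key geometric input is that $T^*X = T^*(G/B)$, and more generally the twisted cotangent bundle $T^*(G/B)^{\lambda}$, has vanishing higher cohomology of its structure sheaf; this is the classical fact that $R^i\Gamma(T^*(G/B), \mathcal{O}) = 0$ for $i>0$, which follows because $T^*(G/B) \to N$ is a resolution with $N$ affine (so $R\Gamma$ of $\mathcal{O}_{T^*(G/B)}$ computes $R\Gamma$ of $R\pi_*\mathcal{O}$, which is $\mathcal{O}_N$ by normality and rational singularities of the nilpotent cone; the twisted case is analogous via the map to a nilpotent orbit closure in the appropriate Levi, or simply because $T^*(G/B)^\lambda$ is again a resolution of an affine normal variety). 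Here $D_h(\lambda)(0)$ is, by construction, an $h$-complete, $h$-flat deformation of $\mathcal{O}_{T^*X}$ (when $\lambda$ is integral so that $T^*(G/B)^{\lambda-\rho}\cong T^*(G/B)$; in general one works with the twisted cotangent bundle, whose structure sheaf still has the cohomology vanishing).

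First I would invoke Lemma~\ref{lem:Basic-CC-lemma} with $Z = T^*X$ (or the relevant twisted cotangent bundle) and $O_h = D_h(\lambda)(0)$, yielding that $R^i\Gamma(T^*X, D_h(\lambda)(0)) = 0$ for $i>0$, that $\Gamma(T^*X, D_h(\lambda)(0))$ is $h$-flat and $h$-complete, and that the reduction map $\Gamma(D_h(\lambda)(0))/h \iso \Gamma(T^*X, \mathcal{O}_{T^*X})$ is an isomorphism. Next I would identify this target: by normality of the relevant affine variety, $\Gamma(T^*X, \mathcal{O}) = \mathcal{O}(N) = S^\bullet\mathfrak{g}/(\text{radical ideal})$, which under the classical Beilinson--Bernstein story is exactly $\gr(U(\mathfrak{g})_\lambda) = \gr(U(\mathfrak{g})/U(\mathfrak{g})I_\lambda)$ — equivalently the associated graded of $U_h(\mathfrak{g})(0)/J_\lambda(0)$, since $J_\lambda(0)/h$ maps onto $I_\lambda$ in $S^\bullet\mathfrak{g}$. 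So both source and target of $\Phi_\lambda(0)$ are $h$-flat, $h$-complete $\mathbb{C}[[h]]$-algebras whose reductions mod $h$ agree, and $\Phi_\lambda(0)$ induces that agreement on reductions mod $h$ — this last point needs the identification of $U_h(\mathfrak g)(0)/J_\lambda(0) \bmod h$ with $\mathcal O(N)$, which is precisely the statement that $I_\lambda$ reduces to the full ideal of $N$ under $\gr$, a consequence of Kostant's theorem / the classical analysis of $U(\mathfrak g)_\lambda$.

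Then I would finish by the standard complete-local-ring argument: a morphism of $h$-adically complete, $h$-torsion-free $\mathbb{C}[[h]]$-modules that is an isomorphism modulo $h$ is an isomorphism. Concretely, surjectivity follows by successive approximation (lift mod $h$, then correct the error term, which lies in $h\cdot(\text{target})$, using $h$-flatness of the source), and injectivity follows because the kernel, being a submodule of an $h$-torsion-free module with zero image mod $h$, would be infinitely $h$-divisible and hence zero by $h$-completeness. I would apply this to $\Phi_\lambda(0): U_h(\mathfrak g)(0)/J_\lambda(0) \to \Gamma(T^*X, D_h(\lambda)(0))$.

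The main obstacle — the part I expect to require real care rather than being purely formal — is establishing that $\Phi_\lambda(0)$ reduces mod $h$ to an \emph{isomorphism} $U_h(\mathfrak g)(0)/J_\lambda(0) \bmod h \iso \mathcal O(N)$, i.e. that the ideal $J_\lambda(0)$ is ``large enough'': its reduction mod $h$ must be exactly the defining ideal of $\mu^{-1}(0) = N$ inside $S^\bullet\mathfrak g$, not something smaller. This is where one genuinely uses the input from \cite{key-11} (the kernel of $\tilde\Phi_\lambda$ is $U(\mathfrak g)I_\lambda$, so that $\Gamma(T^*X, D(\lambda)) = U(\mathfrak g)_\lambda$) together with the classical fact $\gr(U(\mathfrak g)_\lambda) = \mathcal O(N)$ — equivalently that $I_\lambda$, the central ideal for $\lambda$, has $\gr I_\lambda$ equal to the full ideal of $N$, which is Kostant's description of the image of $Z(\mathfrak g)$ in $S^\bullet\mathfrak g$. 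Everything else — the cohomology vanishing, the flatness/completeness of global sections, the iso-mod-$h$-implies-iso lemma — is standard given the earlier results in the paper.
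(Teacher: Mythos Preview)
Your proposal is correct and follows essentially the same approach as the paper's proof: apply Lemma~\ref{lem:Basic-CC-lemma} to see that $\Gamma(D_h(\lambda)(0))$ is $h$-flat and $h$-complete with $\Gamma(D_h(\lambda)(0))/h\cong\Gamma(O_{T^*X})$, identify the reduction mod $h$ of $U_h(\mathfrak g)(0)/J_\lambda(0)$ with $S^\bullet(\mathfrak g)/I(N)\cong\Gamma(O_{T^*X})$, and then conclude via the complete Nakayama lemma. One small remark: your digression about twisted cotangent bundles is unnecessary, since in the paper's setup the character used for the Hamiltonian reduction is $\chi=h\lambda$, so $\chi_0=0$ and $D_h(\lambda)(0)$ is always a quantization of $T^*(G/B)$ itself, regardless of $\lambda$.
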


\begin{proof}
By definition we have $U_{h}(\mathfrak{g})(0)/h\tilde{=}\text{S}^{\bullet}(\mathfrak{g})$,
the symmetric algebra of $\mathfrak{g}$, and we have (c.f. \cite{key-1},
proposition 11.3.1) that the image of $J_{\lambda}(0)$ in $\text{S}^{\bullet}(\mathfrak{g})$
is exactly $I(N)$, the ideal of the nilpotent cone. Therefore 
\[
(U_{h}(\mathfrak{g})(0)/J_{\lambda}(0))/h\tilde{=}\text{S}^{\bullet}(\mathfrak{g})/I(N)
\]

On the other hand, the natural map $\text{S}^{\bullet}(\mathfrak{g})/I(N)\to\Gamma(O_{T^{*}X})$
is an isomorphism. Thus we see that $\Phi_{\lambda}(0)$ becomes an
isomorphism after reduction mod $h$. Now, we have that $U_{h}(\mathfrak{g})(0)/J_{\lambda}(0)$
is $h$-complete (as a quotient of a noetherian $h$-complete algebra),
and it follows readily from the definition that it is $h$-torsion-free.
The same is true of $\Gamma(T^{*}X,D_{h}(\lambda)(0))$ by the preceding
result. Thus both objects are cohomologically complete $\mathbb{C}[[h]]$-modules,
and so the complete Nakayama lemma (as in \cite{key-8}, corollary
1.5.9) now yields that $\Phi_{\lambda}(0)$ is an isomorphism. 
\end{proof}
\begin{rem}
From this lemma, we deduce immediately the version for $\Phi_{\lambda}$;
it is surjective and its kernel is precisely $J_{\lambda}$. We deduce
the surjectivity from the fact that $U_{h}(\mathfrak{g})=\bigcup_{n\geq o}h^{-n}U_{h}(\mathfrak{g})$,
$\Gamma(T^{*}X,D_{h}(\lambda))=\bigcup_{n\geq0}\Gamma(T^{*}X,h^{-n}D_{h}(\lambda))$,
and that each truncation of $\Phi_{\lambda}$ to $h^{-n}U_{h}(\mathfrak{g})$
is surjective; by exactly the same argument as in the proof of the
lemma. The identification of the kernel is proved the same way (i.e.,
look at each truncation). 
\end{rem}

Inspired by the above lemma, we introduce the ring $U_{h,\lambda}:=U_{h}(\mathfrak{g})/J_{\lambda}$.
Then we have a localization theorem for asymptotic differential operators,
as follows: 
\begin{thm}
Let $\lambda$ be an anti-dominant weight. Then $\Gamma:Mod^{qc}(D_{h}(\lambda))\to Mod(U_{h,\lambda})$
is an equivalence of categories. Further, $\Gamma$ takes coherent
$D_{h}(\lambda)$ modules to coherent $U_{h}(\mathfrak{g})$ modules,
and we have that $\Gamma:Mod^{coh}(D_{h}(\lambda))\to Mod^{coh}(U_{h,\lambda})$
is an equivalence of categories as well. 
\end{thm}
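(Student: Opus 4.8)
The plan is to follow the classical Beilinson--Bernstein strategy, adapted to the $h$-adic setting, using the basic vanishing Lemma~\ref{lem:basic-vanishing} as the main technical engine. The key geometric input is that $T^{*}X = T^{*}(G/B)$ maps properly to the nilpotent cone $N$ via the Springer-type resolution, and $N$ is affine; so $H^{i}(T^{*}X, \Oo_{T^{*}X}) = 0$ for $i>0$, which is what makes the reduction-mod-$h$ arguments work. First I would establish that $\Gamma$ has the right target: by Lemma~\ref{lem:Phi-is-iso} and the ensuing remark, $\Phi_{\lambda}\colon U_{h}(\mathfrak{g})/J_{\lambda} = U_{h,\lambda} \to \Gamma(T^{*}X, D_{h}(\lambda))$ is an isomorphism, so $\Gamma$ indeed lands in $U_{h,\lambda}$-modules; and for a coherent $D_{h}(\lambda)$-module $M$ with lattice $M(0)$, the reduction $M(0)/hM(0)$ is a coherent sheaf on $T^{*}X$ supported (set-theoretically) on the image of the zero section's preimage, hence has affine support over $N$, so by Lemma~\ref{lem:basic-vanishing} we get $H^{i}(T^{*}X, M(0)) = 0$ for $i>0$ and $\Gamma(M(0))$ coherent over $\Gamma(D_{h}(\lambda)(0)) = U_{h,\lambda}(0)$; inverting $h$ gives coherence of $\Gamma(M)$ over $U_{h,\lambda}$.

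Next I would construct the inverse functor, localization: $\Delta(N) := D_{h}(\lambda) \otimes_{U_{h,\lambda}} N$ for an $U_{h,\lambda}$-module $N$, and show $(\Gamma, \Delta)$ is an adjoint pair (this is the usual hom-tensor adjunction together with the fact that $\Gamma(D_{h}(\lambda)) = U_{h,\lambda}$). The two things to check are that the unit $N \to \Gamma(\Delta(N))$ and the counit $\Delta(\Gamma(M)) \to M$ are isomorphisms. For the unit, it suffices by an $h$-completeness/Nakayama argument (as in Lemma~\ref{lem:Phi-is-iso}) to check it modulo $h$, where it reduces to the corresponding classical statement for $U(\mathfrak{g})_{\lambda}$-modules and $D(\lambda)$, i.e. to the fact that $\Gamma(\Oo_{T^{*}X}) = \Symp^{\bullet}(\mathfrak{g})/I(N)$ and the Beilinson--Bernstein exactness; more precisely one reduces to the case $N = U_{h,\lambda}(0)$ itself, where it is Lemma~\ref{lem:Phi-is-iso}, and then bootstraps via free resolutions.

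The main obstacle — exactly as in the classical case — will be proving that $\Gamma$ is \emph{exact} on $Mod^{coh}(D_{h}(\lambda))$ and that $\Delta$ is exact (equivalently that $D_{h}(\lambda)$ is flat over $U_{h,\lambda}$ on the relevant category), and this is where anti-dominance of $\lambda$ enters. I expect the argument to go: exactness of $\Gamma$ follows once we know $H^{1}(T^{*}X, M(0)/hM(0)) = 0$ for all coherent lattices, which reduces modulo $h$ to the statement that every coherent sheaf on $T^{*}X$ arising as $M(0)/hM(0)$ has vanishing higher cohomology — this is the geometric avatar of anti-dominance, provable either by the properness of $T^{*}X \to N$ together with a weight/grading estimate using the contracting $\mathbb{C}^{*}$-action (so that $R\Gamma$ is computed by a graded-finite complex whose cohomology is forced into degree zero), or by citing the classical cohomology-vanishing theorem for $D(\lambda)$-modules with $\lambda$ anti-dominant and transporting it across $Gr$. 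Granting this, $\Delta(\Gamma(M)) \to M$ is surjective (its cokernel $C$ would have $\Gamma(C(0)/h) = 0$ with $C(0)/h$ globally generated on the affine-over-$N$ support, forcing $C = 0$) and injective (the kernel $K$ satisfies $\Gamma(K) = 0$ by left-exactness of $\Gamma$ plus the unit isomorphism, hence $K = 0$ by the same global-generation-plus-affineness argument applied to $K(0)/h$). Finally the quasicoherent case follows formally by passing to ind-categories. The one genuinely new point relative to \cite{key-7} and the classical theory is bookkeeping the lattices and invoking $h$-adic completeness/Nakayama at each step, but these are exactly the manipulations already exhibited in Lemmas~\ref{lem:Basic-CC-lemma} and \ref{lem:Phi-is-iso}, so I do not anticipate trouble there.
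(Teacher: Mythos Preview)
There is a genuine gap. Your core assertion --- that for any coherent lattice $M(0)$ the reduction $M(0)/hM(0)$ has vanishing higher cohomology on $T^{*}X$ --- is unjustified and in fact false in general. The map $T^{*}X\to N$ is proper and birational but not an isomorphism, so coherent sheaves on $T^{*}X$ (e.g.\ structure sheaves of Springer fibers, or line bundles supported on the zero section) can certainly have $H^{i}\neq 0$; nothing about ``affine support over $N$'' helps. Your fallback, ``cite the classical cohomology-vanishing for $D(\lambda)$-modules and transport across $\mathrm{Gr}$'', does not work either: reducing $D_{h}(\lambda)(0)$ mod $h$ gives $O_{T^{*}X}$, not $D(\lambda)$, so the mod-$h$ statement you would need is that $\Gamma\colon \mathrm{Coh}(T^{*}X)\to \mathrm{Mod}(O(N))$ is exact or an equivalence, and it is neither. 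The same problem infects your unit/counit argument: the ``classical statement for $U(\mathfrak g)_{\lambda}$-modules and $D(\lambda)$'' simply does not appear when you reduce mod $h$.

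What the paper does instead is lift the classical Beilinson--Bernstein trick to the $D_{h}(\lambda)$-level. One constructs twisting functors $F_{\psi}$ (tensoring with the line bundle $O_{\psi}$, implemented via the Hamiltonian-reduction description of $D_{h}(\lambda)$) and $G_{\nu}$ (tensoring with the finite $G$-module $L(\nu)$), and proves the key representation-theoretic \ref{lem:basic-weight-lemma}: for anti-dominant $\lambda$ and dominant $\mu$, the module $M$ is a direct summand of $F_{-\mu}(M)\otimes L(\mu)$, and $F_{-\mu}(M)$ is a direct summand of $M\otimes L(-w_{0}\mu)$. Now Serre's theorem on the ample bundle $O_{-\mu}$ gives, for $\mu\gg 0$, that $H^{i}(M(0)/h\otimes O_{-\mu})=0$; \emph{then} \ref{lem:basic-vanishing} applies to $F_{-\mu}(M(0))$, and the direct-summand statement transfers the vanishing back to $M$. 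Conservativity is handled by the same twisting trick in reverse. This is precisely where anti-dominance enters, and it cannot be replaced by a bare reduction-mod-$h$ argument; you should build the functors $F_{\psi}$, $G_{\nu}$ and prove \ref{lem:basic-weight-lemma} before attempting exactness and conservativity.
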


This theorem follows formally (c.f. \cite{key-1}, proposition 1.4.4)
from the following: 
\begin{thm}
\label{thm:Converativity-for-asymptotic}For $\lambda$ anti-dominant,
\textup{$\Gamma:Mod_{qc}(D_{h}(\lambda))\to Mod(U_{h,\lambda})$ is
an exact and conservative functor (i.e., $\Gamma(M)=0$ implies $M=0$). }
\end{thm}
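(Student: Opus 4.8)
The plan is to mimic the proof of exactness and conservativity in the classical Beilinson-Bernstein theorem (as in \cite{key-1}), adapting it to the asymptotic ($h$-adic) setting. First I would treat exactness: since $T^{*}X$ is a variety over which $D_{h}(\lambda)(0)$ is $h$-flat and its reduction mod $h$ is a sheaf of functions on (an affine-over-the-base variety mapping to) the nilpotent cone, the key input is a cohomology vanishing statement. Concretely, for a coherent $D_{h}(\lambda)$-module $M$ choose a lattice $M(0)$; by Lemma~\ref{lem:basic-vanishing} it suffices to show $H^{i}(T^{*}X, M(0)/hM(0))=0$ for $i>0$. The sheaf $M(0)/hM(0)$ is a coherent sheaf on $T^{*}X$ set-theoretically supported over the nilpotent cone, and is in fact (a module over) the pushforward $\pi_{*}$ of a coherent sheaf on the twisted cotangent bundle; antidominance of $\lambda$ is exactly the condition making the relevant line bundle have vanishing higher cohomology on $G/B$, so by a spectral sequence / affineness-of-$\pi$ argument one gets the vanishing. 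Then part i) of Lemma~\ref{lem:basic-vanishing} gives surjectivity of $\Gamma(M(0))\to\Gamma(M(0)/hM(0))$, and combined with $h$-flatness and $h$-completeness this yields that $\Gamma$ is exact on coherent modules, hence (passing to ind-objects) on all quasicoherent modules.

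Next I would handle conservativity. Suppose $M$ is a nonzero quasicoherent $D_{h}(\lambda)$-module; I want $\Gamma(M)\ne 0$. Reduce to $M$ coherent (a nonzero quasicoherent module contains a nonzero coherent submodule, and $\Gamma$ is exact so it suffices to find one coherent submodule with nonzero global sections). Pick a lattice $M(0)$; then $M(0)/hM(0)$ is a nonzero coherent sheaf on $T^{*}X$, supported over the nilpotent cone. The crucial geometric point is that $T^{*}X \to N$ is a resolution and $N$ is affine, so $\Gamma(T^{*}X, \mathcal{F})\ne 0$ for any nonzero coherent $\mathcal{F}$ on $T^{*}X$ that is (scheme-theoretically supported on a closed subscheme which is) the pullback of something on the base — more precisely, using antidominance again to kill higher cohomology and using that global sections on $G/B$ of the relevant (anti)ample-type bundles detect nonvanishing. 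Thus $\Gamma(T^{*}X, M(0)/hM(0))\ne 0$; by Lemma~\ref{lem:basic-vanishing}(i) the map $\Gamma(M(0))\to\Gamma(M(0)/hM(0))$ is onto, so $\Gamma(M(0))\ne 0$, and inverting $h$ (using that $\Gamma(M(0))$ is $h$-torsion-free by $h$-flatness, as in Lemma~\ref{lem:Basic-CC-lemma}) gives $\Gamma(M)=\Gamma(M(0))[h^{-1}]\ne 0$.

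There is one subtlety I would be careful about: the category $Mod^{qc}(D_{h}(\lambda))$ is defined as the ind-category of coherent modules, so I should check that $\Gamma$ commutes with the relevant filtered colimits (true on an affine-over-the-base situation, or by the Noetherian property and the fact that $T^{*}X$ is quasi-compact quasi-separated), and that exactness/conservativity on coherent objects propagates. Also, to reduce everything to the mod-$h$ picture I must know that a nonzero coherent $D_{h}(\lambda)$-module admits a lattice with $M(0)/hM(0)\ne 0$, which is automatic from coherence (bounded $h$-torsion) and $h$-completeness as recorded in the Lemma following Definition~\ref{def:Quantization}.

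The main obstacle I anticipate is the cohomology-vanishing input for $M(0)/hM(0)$ under the antidominance hypothesis. In the classical setting this is the statement that $H^{i}(G/B, \mathcal{D}_{\lambda}\text{-module})=0$, proved by a reduction to line bundles on $G/B$ via a good filtration and the Borel--Weil--Bott vanishing for antidominant $\lambda$; here one has the extra bookkeeping of the $h$-filtration and of working on $T^{*}X$ rather than $X$ (so one first applies $\pi_{*}$, which is exact because $\pi$ is affine, and then invokes the line-bundle vanishing). Getting the weight conventions right (the shift by $\rho$, which is why the sheaf of honest differential operators is $D_{h}(-\rho)$) and making sure "antidominant" is the correct sign for the vanishing rather than for ampleness is the fiddly part; everything else is a formal consequence of Lemmas~\ref{lem:basic-vanishing}, \ref{lem:Basic-CC-lemma}, and \ref{lem:Phi-is-iso} together with the standard argument of \cite{key-1}, Proposition~1.4.4 deducing the equivalence from exactness plus conservativity.
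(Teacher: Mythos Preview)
Your proposal has a genuine gap in both parts, stemming from a misunderstanding of how the parameter $\lambda$ enters the picture after reducing mod $h$.

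Recall that $D_{h}(\lambda)(0)$ is obtained by Hamiltonian reduction at the character $\chi=h\lambda$, so $D_{h}(\lambda)(0)/h\cong O_{T^{*}X}$ \emph{independently of $\lambda$}. Consequently, for any lattice $M(0)$ the reduction $M(0)/hM(0)$ is just an arbitrary coherent $O_{T^{*}X}$-module and carries no trace of $\lambda$. Your exactness argument asserts that antidominance of $\lambda$ forces $H^{i}(T^{*}X,M(0)/hM(0))=0$, but there is nothing to tie $\lambda$ to this sheaf; $T^{*}X$ is not affine, and a general coherent sheaf on it certainly has higher cohomology. Likewise, your conservativity argument claims $\Gamma(T^{*}X,\mathcal{F})\neq0$ for any nonzero coherent $\mathcal{F}$ on $T^{*}X$; this is false (for $G=SL_{2}$, push forward $O_{\mathbb{P}^{1}}(-1)$ along the zero section $\mathbb{P}^{1}\hookrightarrow T^{*}\mathbb{P}^{1}$ to get a nonzero sheaf with no global sections).

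The paper's proof supplies the missing mechanism: the twist functors $F_{\psi}$ and $G_{\nu}$ (tensoring with $B$-characters and finite-dimensional $G$-modules upstairs on $T^{*}G$) together with \ref{lem:basic-weight-lemma}. For exactness one first twists by a sufficiently dominant $\mu$ so that Serre vanishing for the ample bundle $O_{-\mu}$ kills $H^{i}(T^{*}X,F_{-\mu}(M(0))/h)$, applies \ref{lem:basic-vanishing}, and then uses that $M$ is a direct summand of $F_{-\mu}(M)\otimes L(\mu)$; antidominance of $\lambda$ enters only through \ref{lem:basic-weight-lemma}, which guarantees this summand decomposition. Conservativity is similar: one first passes to the $h$-torsion-free quotient $N(0)$ of $M(0)$ (your assertion that $\Gamma(M(0))$ is automatically $h$-torsion-free is also not justified, since a lattice need not be $h$-flat), twists to produce nonzero sections of $F_{-\mu}(N)$, and embeds $F_{-\mu}(N)\hookrightarrow N\otimes L(-w_{0}\mu)$ to derive a contradiction. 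Without the translation/tensor-product machinery there is no way to exploit the hypothesis on $\lambda$.
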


The inverse functor is given by the localization of modules: $M\to D_{h}(\lambda)\otimes_{U_{h,\lambda}}M$.
Here $U_{h,\lambda}$ represents the constant sheaf on $T^{*}X$,
and $M$ as well. $D_{h}(\lambda)$ is a module over $U_{h,\lambda}$
via the identification of global sections. 

The proof of this theorem will follow from similar considerations
to the classical case. To begin, recall that for each $\psi\in\mathfrak{b}^{*}$
which comes from a character of $\mathfrak{h}^{*}$, we have the induced
line bundle $O_{\psi}$ on $X$. We choose a normalization so that
the anti-dominant weights correspond to ample line bundles. By abuse
of notation, we shall also denote by $O_{\psi}$ the line bundle $\pi^{*}O_{\psi}$
on $T^{*}X$. For anti-dominant $\psi$, these bundles are ample over
the base scheme $N$, as can easily be seen by looking at the morphism
to projective space over $N$ corresponding to $O_{\psi}$. Since
$N$ is an affine variety, all of Serre's theorems about ample bundles
on projective varieties go through in this case (see \cite{key-10},
chapter 3, section 5). The key to the argument will be the twisting
of $D_{h}(\lambda)$ modules by these line bundles. We formulate this
twisting by using the Hamiltonian reduction definition of differential
operators, following \cite{key-7}. 

In particular, as discussed in the previous section, we have equivalences
of categories 
\[
Mod^{qc}(D_{h}(\lambda))\tilde{\to}Mod_{\lambda}^{B,qc}(D_{h}(T^{*}G))
\]
 and 
\[
Mod^{coh}(D_{h}(\lambda))\tilde{\to}Mod_{\lambda}^{B,coh}(D_{h}(T^{*}G))
\]
On $T^{*}G$, we have, if $V$ is any finite dimensional $B$-module,
the twist functor 
\[
Mod^{B,?}(D_{h}(T^{*}G))\to Mod^{B,?}(D_{h}(T^{*}G))
\]
given by $M\to M\otimes V$ (where $?$ stands for either coherent
or quasi-coherent). In the case where $V=\mathbb{C}_{\psi}$, the
latter functor is an equivalence of categories 
\[
Mod_{\lambda}^{B,?}(D_{h}(T^{*}G))\tilde{\to}Mod_{\lambda+\psi}^{B,?}(D_{h}(T^{*}G)).
\]
Combining the two functors, we get equivalences of categories 
\[
Mod^{?}(D_{h}(\lambda))\tilde{\to}Mod^{?}(D_{h}(\lambda+\psi)),
\]
and we shall refer to this functor as $F_{\psi}$ (in both the coherent
and quasicoherent cases, with the character $\lambda$ being understood).
We can describe this functor directly as follows: we denote the quotient
morphism by $p:\mu^{-1}(0)\to T^{*}(G/B)$. Then, we define $V_{\psi}:=\mathbb{C}_{\psi}\otimes O_{T^{*}G}|_{\mu^{-1}(0)}$
, with its $B$-equivariant structure defined by the representation
$\mathbb{C}_{\psi}$. Then we have 
\begin{claim}
\label{claim:little-iso}The sheaf $p_{*}(V_{\psi})^{B}\tilde{=}O_{\psi}$. 
\end{claim}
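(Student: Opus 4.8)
The plan is to identify $\mu^{-1}(0)$ with a $B$-torsor pulled back from $G/B$, and then to recognize $p_*(V_\psi)^B$ as an associated line bundle via faithfully flat descent.

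First I would make the geometry explicit using the Example above, applied with $H=G$ and $\lambda=0$. There the moment map $\mu\colon T^*G\cong G\times\mathfrak g^*\to\mathfrak b^*$ for the right $B$-action is computed to have
\[
\mu^{-1}(0)=\{(g,\mathrm{ad}^*(g^{-1})\beta)\ :\ g\in G,\ \beta\in\mathfrak b^{\perp}\}\ \cong\ G\times\mathfrak b^{\perp},
\]
the $B$-action being $(g,\beta)\cdot b=(gb,\mathrm{ad}^*(b^{-1})\beta)$ in the right-hand coordinates, and $p\colon\mu^{-1}(0)\to T^*(G/B)$ becoming the tautological projection $G\times\mathfrak b^{\perp}\to G\times_B\mathfrak b^{\perp}$. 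In other words the square
\[
\begin{array}{ccc}
\mu^{-1}(0) & \longrightarrow & G\\[2pt]
\downarrow\, p & & \downarrow\, q\\[2pt]
T^*(G/B) & \xrightarrow{\ \pi\ } & G/B
\end{array}
\]
is Cartesian, where $q\colon G\to G/B$ is the tautological $B$-torsor and $\pi$ the bundle projection; moreover the $B$-equivariant structure on $O_{T^*G}|_{\mu^{-1}(0)}$ induced by the right action is exactly the pullback along the top arrow of the canonical $B$-equivariant structure on $O_G$. Thus $\mu^{-1}(0)\to T^*(G/B)$ is the $B$-torsor $\pi^*(G\to G/B)$.

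Next I would invoke faithfully flat descent for the affine torsor $p$ (affine since $B$ is affine). For any $B$-torsor $p\colon P\to Z$ with $B$ affine, the functor $\cF\mapsto p_*(\cF)^B$ is an equivalence from $B$-equivariant quasicoherent sheaves on $P$ to quasicoherent sheaves on $Z$, with quasi-inverse $p^*$ equipped with its canonical equivariant structure, and it sends $O_P$ with equivariant structure twisted by a character $\psi$ of $B$ to the associated line bundle $P\times_B\mathbb C_{\psi}$. Applying this to $V_\psi=O_{\mu^{-1}(0)}\otimes\mathbb C_{\psi}$ gives $p_*(V_\psi)^B\ \iso\ \mu^{-1}(0)\times_B\mathbb C_{\psi}$. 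Finally, since $\mu^{-1}(0)=\pi^*G$ as $B$-torsors, forming the associated bundle commutes with the base change $\pi$, so $\mu^{-1}(0)\times_B\mathbb C_{\psi}\cong\pi^*(G\times_B\mathbb C_{\psi})$; by the very definition of $O_\psi$ on $G/B$ (with the normalization fixed above) one has $G\times_B\mathbb C_{\psi}=O_\psi$, and $\pi^*O_\psi$ is by convention what is denoted $O_\psi$ on $T^*(G/B)$. Hence $p_*(V_\psi)^B\cong O_\psi$.

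The point requiring genuine care is the bookkeeping of equivariant structures and sign conventions: that the restriction to $\mu^{-1}(0)$ of the right-$B$-equivariant structure on $O_{T^*G}$ really is the pullback of the canonical structure on $O_G$ (so that the twist by $\mathbb C_{\psi}$ lands on the correct side), and that the normalization of $O_\psi$ chosen so that anti-dominant weights give ample bundles matches the associated-bundle convention $G\times_B\mathbb C_{\psi}$; everything else is formal descent and flat base change. If one prefers to avoid the descent formalism, the same claim follows by a direct local computation: over an open $U\subseteq G/B$ trivializing $q$ one has $p^{-1}(\pi^{-1}U)\cong\pi^{-1}(U)\times B$, the $\psi$-twisted $B$-invariant functions are $\{f:f(x,bb')=\psi(b')^{-1}f(x,b)\}\cong O_{\pi^{-1}(U)}$, and the transition functions between such charts are precisely the $\psi$-cocycle defining $O_\psi$.
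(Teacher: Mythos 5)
Your proof is correct and follows essentially the same route as the paper's: descent along the $B$-torsor $p$ reduces the statement to identifying the pullback, which is then checked by factoring through $G$ via the Cartesian square and using the definition of the induced bundle $O_\psi=G\times_B\mathbb{C}_\psi$. You spell out the descent formalism and the base-change step more explicitly than the paper does, and the remarks about matching equivariant-structure conventions and the local-chart check are useful sanity checks, but the underlying argument is the same.
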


\begin{proof}
To check this, it suffices to show that $p^{*}(O_{\psi})\tilde{=}V_{\psi}$(as
we are dealing with $B$-equivariant sheaves, and $p$ is a $B$-principal
bundle morphism). To check this, it suffices to take the line bundle
$O_{\psi}$ on $G/B$, pull back to $G$, and then pull back to $\mu^{-1}(0)$.
But the pullback of $O_{\psi}$ to $G$ is the sheaf $\mathbb{C}_{\psi}\otimes O_{G}$,
by the definition of the induced bundle. This proves the claim. 
\end{proof}
So, given a module $M$(0) in $Mod_{\lambda}^{B,qc}(D_{h}(\lambda)(0))$,
our functor $F_{\psi}$ is given by 
\[
M(0)\to p_{*}Hom_{D_{h}(T^{*}G)(0)}(L_{\lambda}(0),L_{\lambda}(0)\otimes_{p^{-1}D_{h}(\lambda)(0)}p^{-1}M(0)\otimes\mathbb{C}_{\psi})^{B}
\]
Thus, we see that 
\[
F_{\psi}(M(0))/hF_{\psi}(M(0))\tilde{=}M(0)/hM(0)\otimes_{O_{T^{*}X}}O_{\psi}
\]
using the fact that the functor 
\[
M(0)\to p_{*}Hom_{D_{h}(T^{*}G)(0)}(L_{\lambda}(0),L_{\lambda}(0)\otimes_{p^{-1}D_{h}(\lambda)(0)}p^{-1}M(0))
\]
 is just the identity (see above). 

We can also consider the twist functor in the case of the $B$ module
$L(\nu)$, where $L(\nu)$ is the irreducible $G$-module of highest
weight $\nu$ (where $\nu$ is supposed to be dominant integral).
This gives a functor $G_{\nu}:Mod^{?}(D_{h}(\mathfrak{h})(0))\to Mod^{?}(D_{h}(\mathfrak{h})(0))$
(and, of course, a $G_{\nu}:Mod^{?}(D_{h}(\mathfrak{h}))\to Mod^{?}(D_{h}(\mathfrak{h}))$),
which, however, does not map a subcategory of the type $Mod^{?}(D_{h}(\lambda)(0))$
to another, because the module $L(\nu)$ does not have a $B$-character. 

On the other hand, we have, by standard weight theory, a finite \textbf{$B$-}filtration
of $L(\nu)$, $\{L(\nu)_{i}\}$, such that the subquotients $L(\nu)_{i}/L(\nu)_{i-1}$
are one dimensional $B$-modules. We now let $M(0)\in Mod^{qcoh}(D_{h}(\lambda)(0))$.
If we twist $M$ by $L(\nu)$; then the result is 
\[
M(0)\to p_{*}Hom_{D_{h}(T^{*}G)(0)}(L_{\lambda}(0),L_{\lambda}(0)\otimes_{p^{-1}D_{h}(\lambda)(0)}p^{-1}M(0)\otimes L(v))^{B}:=G_{v}(M(0))
\]

Now, because the module $L(\nu)$ has a $G$-action, the sheaf $p_{*}(L(\nu)\otimes O_{T^{*}G}|_{\mu^{-1}(0)})^{B}$
is actually a trivial vector bundle over $T^{*}X$. So in this case
we conclude that $G_{\nu}(M(0))\tilde{=}M(0)\otimes L(\nu)$; i.e.,
it is simply a finite direct sum of copies of $M(0)$. 

Then, we have a filtration on $G_{\nu}(M(0))$, $\{G_{\nu}(M(0))_{i}\}$,
induced from that on $L(v)$; this is a filtration of $D_{h}(\mathfrak{h})(0)-$modules.
The important point is the following: the subquotients of this filtration
$G_{\nu}(M(0))_{i}/G_{\nu}(M(0))_{i-1}$ are isomorphic to the sheaf
\[
p_{*}Hom_{D_{h}(T^{*}G)(0)}(L_{\lambda}(0),L_{\lambda}(0)\otimes_{p^{-1}D_{h}(\lambda)(0)}p^{-1}M(0)\otimes\mathbb{C}_{\nu_{i}})^{B}\tilde{=}F_{\nu_{i}}(M(0)).
\]

And, of course, the same isomorphism holds after inverting $h$ everywhere. 

Now, if we restrict our attention to the copy of $U(\mathfrak{g})$
described above (the one generated by elements of the form $h^{-1}x$
for $x\in\mathfrak{g}$), then we have that the ideal $I_{\lambda+\nu_{i}}$
acts trivially on $F_{\nu_{i}}(M)$. If we associate to each $I_{\lambda}$
the central character $\chi_{\lambda}$, then we have that for all
$\xi\in Z(\mathfrak{g})$, the product $\Pi_{i}(\xi-\chi_{\lambda+\nu_{i}}(\xi))$
annihilates $G_{\nu}(M)$. Therefore, we can write $G_{\nu}(M)\tilde{=}\bigoplus G_{\nu}(M)_{[\psi]}$
a direct sum of generalized $Z(\mathfrak{g})$-eigensheaves. 

Repeating the proof of \cite{key-11} (lemma 1,pg 24) verbatim, we
can conclude the following 
\begin{lem}
\label{lem:basic-weight-lemma}Let $\lambda$ be an anti-dominant
weight, and $\mu$ a dominant integral weight and let $M\in Mod^{qc}(D_{h}(\lambda))$.
Then we have that, $M\tilde{=}G_{\mu}F_{-\mu}(M)_{[\lambda]}$. 

Therefore, we see that $M$ is a direct summand of $F_{-\mu}(M)\otimes L(\mu)$. 

Further, let $w_{0}$ denote the longest element of the Weyl group.
Then the sheaf $F_{-\mu}(M)$ is a direct summand of $G_{-w_{0}\mu}(M)\tilde{=}M\otimes L(-w_{0}\mu)$. 
\end{lem}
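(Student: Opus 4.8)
The plan is to transcribe the classical argument of \cite{key-11} (lemma 1, p.~24), using the two structures on $G_\nu(M)$ that were already produced in the discussion preceding the statement: the finite $B$-filtration $\{G_\nu(M)_i\}$ whose subquotients are $G_\nu(M)_i/G_\nu(M)_{i-1}\tilde{=}F_{\nu_i}(M)$ (with $\nu_i$ running over the weights of $L(\nu)$ with multiplicity), and the decomposition $G_\nu(M)=\bigoplus_{[\psi]}G_\nu(M)_{[\psi]}$ into generalized $Z(\mathfrak{g})$-eigensheaves. Since both of these, together with the isomorphisms $G_\nu(M)\tilde{=}M\otimes L(\nu)$ and $F_\psi\circ F_{\psi'}\tilde{=}F_{\psi+\psi'}$, have already been checked in the asymptotic ($\mathcal{O}_h[h^{-1}]$) setting with the identical formulae, nothing new happens at the level of the sheaf-theoretic formalism; the argument is purely about weights.

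First I would establish $M\tilde{=}G_\mu F_{-\mu}(M)_{[\lambda]}$. Applying $F_{-\mu}$ lands us in $Mod^{qc}(D_h(\lambda-\mu))$; then $G_\mu(F_{-\mu}(M))$ carries a $B$-filtration with subquotients $F_{\mu_i}(F_{-\mu}(M))\in Mod^{qc}(D_h(\lambda-\mu+\mu_i))$, $\mu_i\in\mathrm{wts}(L(\mu))$, and, by the discussion preceding the statement, each such subquotient is annihilated by the maximal ideal $I_{\lambda-\mu+\mu_i}$, i.e.\ has the single central character $\chi_{\lambda-\mu+\mu_i}$. The $Z(\mathfrak{g})$-eigensheaf decomposition refines this filtration, so $G_\mu(F_{-\mu}(M))_{[\lambda]}$ acquires a filtration whose nonzero subquotients are exactly the $F_{\mu_i}(F_{-\mu}(M))$ with $\lambda-\mu+\mu_i$ in the $W$-orbit of $\lambda$. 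Since $\mu_i\leq\mu$ we have $\lambda-\mu+\mu_i\leq\lambda$, while anti-dominance makes $\lambda$ the minimal element of its $W$-orbit; hence $\mu_i=\mu$, which occurs with multiplicity one in $L(\mu)$. Therefore $G_\mu(F_{-\mu}(M))_{[\lambda]}$ has a one-step filtration equal to $F_\mu(F_{-\mu}(M))$, and $F_\mu\circ F_{-\mu}$ is tensoring by $\mathbb{C}_\mu\otimes\mathbb{C}_{-\mu}=\mathbb{C}_0$, so this is $M$. Because $G_\mu(F_{-\mu}(M))\tilde{=}F_{-\mu}(M)\otimes L(\mu)$ is, as a sheaf, a finite direct sum of copies of $F_{-\mu}(M)$, the realization of $M$ as the summand $G_\mu(F_{-\mu}(M))_{[\lambda]}$ of $G_\mu(F_{-\mu}(M))$ immediately gives the second assertion.

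For the last assertion I would run the same computation with $L(-w_0\mu)\tilde{=}L(\mu)^{*}$, in which $-\mu$ is the \emph{lowest} weight and occurs with multiplicity one. Applying $G_{-w_0\mu}$ to $M\in Mod^{qc}(D_h(\lambda))$ gives a sheaf with $B$-filtration having subquotients $F_{\nu_j}(M)\in Mod^{qc}(D_h(\lambda+\nu_j))$, $\nu_j\in\mathrm{wts}(L(-w_0\mu))$, of central character $\chi_{\lambda+\nu_j}$. One passes to the $[\lambda-\mu]$-eigensheaf. Here $\lambda-\mu$ is again anti-dominant (difference of an anti-dominant and a dominant weight), hence minimal in its orbit, and the representation-theoretic input — the point actually copied from \cite{key-11} — is that $\nu_j=-\mu$ is the unique weight of $L(-w_0\mu)$ with $\lambda+\nu_j$ in the $W$-orbit of $\lambda-\mu$. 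Granting this, $G_{-w_0\mu}(M)_{[\lambda-\mu]}$ has a one-step filtration equal to $F_{-\mu}(M)$, exhibiting $F_{-\mu}(M)$ as a direct summand of $G_{-w_0\mu}(M)\tilde{=}M\otimes L(-w_0\mu)$.

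The main obstacle is this last weight-combinatorial statement (and, for the first assertion, the already-sketched but still slightly delicate minimality argument): one must know precisely that an \emph{extreme} weight of $L(\mu)$, respectively of $L(-w_0\mu)$, is the \emph{only} weight whose shift lands in the relevant $W$-orbit, and this is exactly where the anti-dominance of $\lambda$ (and its regularity) is essential. The second, more bookkeeping, obstacle is to check carefully that the $Z(\mathfrak{g})$-eigensheaf decomposition is genuinely compatible with the $B$-filtration of $G_\nu(M)$ — so that an eigensheaf summand really acquires a filtration by eigensheaf summands of the $F_{\nu_i}(M)$ — which rests on the fact (from Lemma~\ref{lem:Phi-is-iso} and the following remark) that each $F_{\nu_i}(M)$ has an honest, rather than merely generalized, central character. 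Everything else in the argument is formal given the material already developed.
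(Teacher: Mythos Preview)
Your proposal is correct and is precisely the argument the paper has in mind: the paper's own ``proof'' consists solely of the sentence ``Repeating the proof of \cite{key-11} (lemma~1, p.~24) verbatim, we can conclude the following,'' and what you have written is exactly that verbatim transcription into the $D_h$-setting, using the $B$-filtration/$Z(\mathfrak g)$-eigensheaf interplay already set up in the preceding paragraphs. Your flagged obstacles (the weight-combinatorial uniqueness statement, which indeed needs regularity of $\lambda$, and the compatibility of the eigensheaf splitting with the $B$-filtration) are the only places where any real content lives, and both are handled in \cite{key-11} as you indicate.
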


Now we can give the 
\begin{proof}
(of \ref{thm:Converativity-for-asymptotic}). We first handle
exactness. We note that any $M\in Mod^{qc}(D_{h}(\lambda))$ is a
direct limit of coherent $D_{h}(\lambda)$- modules (see section 2)
and that cohomology commutes with direct limits on a noetherian space.
So WLOG $M\in Mod^{coh}(D_{h}(\lambda))$ with $\lambda$ anti-dominant,
and with $M(0)$ a lattice. Then $M(0)/hM(0)$ is a coherent sheaf
on $T^{*}X$, and by Serre's theorem, there exists $\mu>>0$ so that
$H^{i}(O_{T^{*}X},M(0)/hM(0)\otimes O_{-\mu})=0$ for all $i>0$.
Further, we know that $M(0)/hM(0)\otimes O_{-\mu}\tilde{=}F_{-\mu}(M(0))/hF_{-\mu}(M(0))$. 

Now, by \ref{lem:basic-vanishing}, we have that $H^{i}(T^{*}X,F_{-\mu}(M(0)))=0$
for all $i>0$. Therefore we conclude that $H^{i}(T^{*}X,F_{-\mu}(M))=0$
for all $i>0$ as $F_{-\mu}(M)=F_{-\mu}(M(0))[h^{-1}]$. 

But now, by \ref{lem:basic-weight-lemma}, we have an injection
$$H^{i}(T^{*}X,M)\to H^{i}(T^{*}X,F_{-\mu}(M)\otimes L(\mu))=H^{i}(T^{*}X,F_{-\mu}(M))\otimes L(\mu)=0$$
for all $i>0.$ So exactness is shown.

We now show that $\Gamma(T^{*}X,M)=0$ implies $M=0$, for $M\in Mod^{qc}(D_{h}(\lambda))$.
Our assumption is that we have that $0=\Gamma(T^{*}X,M(0))[h^{-1}]$,
which implies that for each global section $s$, there exists some
$n\geq1$ such that $h^{n}s=0$. Now, we define, for each $i\geq1$,
the subsheaf $M(0)_{i}$, which is the sheaf of local sections of
$M(0)$ which are annihilated by $h^{i}$. Then the theorem becomes
equivalent to showing $M(0)=\cup_{i}M(0)_{i}$. If not, we consider
the quotient sheaf $N(0)=M(0)/\cup_{i}M(0)_{i}$. Then $N(0)$ is
a nontrivial $D_{h}(\lambda)(0)$-module. 

We note that by definition, $M\tilde{=}N:=N(0)[h^{-1}]$. Therefore,
$\Gamma(T^{*}X,N)=0$. Further, the construction of $N(0)$ implies
that the natural map $N(0)\to N$ is injective (i.e., there are no
local sections that are killed by a power of $h$). So we see that
it suffices to show $N=0$. 

Now, there exists some dominant $\mu$ such that $\Gamma(T^{*}X,F_{-\mu}(N(0))/hF_{-\mu}(N(0)))\neq0$
by Serre's theorem's about ample line bundles (we note that the assumption
that $N(0)\neq0$ implies $N(0)/hN(0)\neq0$ by the Nakayama lemma).
This implies (by \ref{lem:basic-vanishing}) that $\Gamma(T^{*}X,F_{-\mu}(N(0)))\neq0$.
In turn, the module $F_{-\mu}(N(0))$ injects to $F_{-\mu}(N)$ as
$F_{-\mu}(N(0))$ has no local sections which are killed by a power
of $h$ (this follows from the corresponding fact about $N(0)$).

Now, let $w_{0}$ denote the longest element of the Weyl group. Given
a dominant weight $\mu$, we have an injection $F_{-\mu}(N)\to G_{-w_{0}\mu}(N)$
(see lemma 5.7). Therefore , the fact that $0\neq\Gamma(T^{*}X,F_{-\mu}(N))$
implies 

\[
0\neq\Gamma(T^{*}X,G_{-w_{0}\mu}(N))=\Gamma(T^{*}X,N)\otimes L(-w_{0}\mu)=0,
\]
a contradiction. 
\end{proof}
We now discuss the $\mathbb{C}^{*}$- equivariance conditions that
need to be imposed. The above theorem deals with categories of modules
defined over the field $\mathbb{C}((h))$, whereas the original localization
theorem deals with the $\mathbb{C}$-linear category of $U(\mathfrak{g})_{\lambda}$-modules.
We now show how to recover the original theorem from the one above. 

First of all, we have canonical $\mathbb{C}^{*}$- actions on both
$U_{h,\lambda}$ and $D_{h}(\lambda)$: for $U_{h,\lambda}$we let
$\phi_{t}(h)=t^{-1}h$ and $\phi_{t}(g)=tg$ for all $g\in\mathfrak{g}$.
This is the standard $\mathbb{C}^{*}$ action on $U_{h}(\mathfrak{g})$
and it induces one on $U_{h,\lambda}$. For $D_{h}(\lambda)$ we start
with the sheaf $D_{h}$ on $T^{*}G$, and consider the action of $\mathbb{C}^{*}$
by dilation of the fibers. $D_{h}$ is equivariant with respect to
this action by setting $\psi_{t}(h)=t^{-1}h$, $\psi_{t}(\xi)=t\xi$
where $\xi$ is any global vector field. It is easy to observe that
this action preserves the set $\mu^{-1}(0)\subseteq T^{*}G$ and commutes
with the action of $B$ on the right. Thus we see that this gives
rise to a $\mathbb{C}^{*}$-action on $T^{*}X$ with respect to which
all the sheaves $D_{h}(\lambda)$ are equivariant. 

Now we can make some observations about these actions: first, $U_{h,\lambda}^{\mathbb{C}^{*}}\tilde{=}U(\mathfrak{g})_{\lambda}$.
This follows from the fact that $U_{h}(\mathfrak{g})^{\mathbb{C}^{*}}\tilde{=}U(\mathfrak{g})$,
which is simply the identification of $U(\mathfrak{g})$ with the
subalgebra of $U_{h}(\mathfrak{g})$ generated by $h^{-1}\mathfrak{g}$
(that these are the $\mathbb{C}^{*}$-fixed elements follows immediately
from the description of the $\mathbb{C}^{*}$ action given above). 

Next, we can observe that, for an open subset $U\subseteq G/B$, if
$V=\pi^{-1}(U)$, we have $D_{h}(\lambda)(V)^{\mathbb{C}^{*}}\tilde{=}D(\lambda)(U)$
(by the same reasoning as the above). We can in fact make the stronger
statement that we have an equivalence of categories: $Mod^{coh,\mathbb{C}^{*}}(D_{h}(\lambda))\tilde{\to}Mod^{coh}(D(\lambda))$,
where the left-hand side denotes the category of $\mathbb{C}^{*}$-equivariant
coherent $D_{h}(\lambda)$ modules. This equivalence is given by taking
$\mathbb{C}^{*}$-invariant sections. 

Given all this, the statement of the final theorem (the original Beilinson-Bernstein
localization) is intuitively clear: 
\begin{thm}
\label{thm:Loc-on-Flag} For $\lambda$ anti-dominant, we have an
equivalences of categories: 
\[
Mod^{f.g.}(U(\mathfrak{g})_{\lambda})\tilde{\to}Mod^{coh,\mathbb{C}^{*}}(D_{h}(\lambda))\tilde{\to}Mod^{coh}(D(\lambda))
\]

\[
Mod(U(\mathfrak{g})_{\lambda})\tilde{\to}Mod^{qc,\mathbb{C}^{*}}(D_{h}(\lambda))\tilde{\to}Mod^{qc}(D(\lambda))
\]
\end{thm}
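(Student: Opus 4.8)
The plan is to deduce the theorem from the localization theorem for asymptotic differential operators established above, namely the equivalence $\Gamma:Mod^{qc}(D_{h}(\lambda))\tilde{\to}Mod(U_{h,\lambda})$ with quasi-inverse $\Delta:N\mapsto D_{h}(\lambda)\otimes_{U_{h,\lambda}}N$, by adding $\mathbb{C}^{*}$-equivariance and then passing to $\mathbb{C}^{*}$-invariants. The rightmost pair of equivalences $Mod^{?,\mathbb{C}^{*}}(D_{h}(\lambda))\tilde{\to}Mod^{?}(D(\lambda))$ (for $?$ coherent or quasicoherent) is essentially already in hand: it is implemented by $M\mapsto\pi_{*}(M)^{\mathbb{C}^{*}}$, using the local identification $D_{h}(\lambda)(V)^{\mathbb{C}^{*}}\tilde{=}D(\lambda)(U)$ recorded above, with \ref{lem:C-star-actions} ensuring the functor is well behaved and essentially surjective; the quasicoherent case then follows by passing to ind-categories. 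So the real content is the leftmost pair, $Mod^{f.g.}(U(\mathfrak{g})_{\lambda})\tilde{\to}Mod^{coh,\mathbb{C}^{*}}(D_{h}(\lambda))$ and $Mod(U(\mathfrak{g})_{\lambda})\tilde{\to}Mod^{qc,\mathbb{C}^{*}}(D_{h}(\lambda))$.

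First I would check that the adjoint pair $(\Delta,\Gamma)$ is compatible with the canonical $\mathbb{C}^{*}$-actions: the actions $\phi_{t}$ on $U_{h,\lambda}$ and $\psi_{t}$ on $D_{h}(\lambda)$ correspond under $\Phi_{\lambda}$ (both scale $h$ by $t^{-1}$ and $\mathfrak{g}$, resp.\ the vector fields $\xi_{x}$, by $t$), so $\Gamma$ of a $\mathbb{C}^{*}$-equivariant $D_{h}(\lambda)$-module---equivariant in the sense of \ref{def:Equivariance-for-O}, i.e.\ with compatible equivariant structures on all truncations mod $h^{n}$---inherits a $\mathbb{C}^{*}$-equivariant $U_{h,\lambda}$-module structure, and $\Delta$ of a $\mathbb{C}^{*}$-equivariant $U_{h,\lambda}$-module is again $\mathbb{C}^{*}$-equivariant since the tensor product is taken over the equivariant sheaf $U_{h,\lambda}$. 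As $\Gamma$ and $\Delta$ remain mutually inverse after this enrichment, they restrict to mutually inverse equivalences $Mod^{qc,\mathbb{C}^{*}}(D_{h}(\lambda))\tilde{\to}Mod^{\mathbb{C}^{*}}(U_{h,\lambda})$, the target being $U_{h,\lambda}$-modules equipped with a compatible $\mathbb{C}^{*}$-structure on each truncation; and, using that $\Gamma$ preserves coherence together with the finite-generation half of \ref{lem:C-star-actions}, the same functors restrict to $Mod^{coh,\mathbb{C}^{*}}(D_{h}(\lambda))\tilde{\to}Mod^{f.g.,\mathbb{C}^{*}}(U_{h,\lambda})$.

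Next I would descend from the $\mathbb{C}((h))$-linear side to the $\mathbb{C}$-linear side by taking $\mathbb{C}^{*}$-invariants: the functor $M\mapsto M^{\mathbb{C}^{*}}$ should give an equivalence $Mod^{?,\mathbb{C}^{*}}(U_{h,\lambda})\tilde{\to}Mod^{?}(U(\mathfrak{g})_{\lambda})$ (with $?$ finitely generated or unrestricted), with quasi-inverse $N\mapsto U_{h,\lambda}\otimes_{U(\mathfrak{g})_{\lambda}}N$ carrying its evident $\mathbb{C}^{*}$-equivariant structure. This rests on the already-noted identification $U_{h,\lambda}^{\mathbb{C}^{*}}\tilde{=}U(\mathfrak{g})_{\lambda}$ and on the observation that an algebraic $\mathbb{C}^{*}$-action is locally finite, so that a $\mathbb{C}^{*}$-equivariant $U_{h,\lambda}$-module is an honest $\mathbb{Z}$-graded module rather than an $h$-completed one, and $U_{h,\lambda}$ is, via $U_{h}(\mathfrak{g})\tilde{=}U(\mathfrak{g})((h))$, faithfully flat over its degree-zero part $U(\mathfrak{g})_{\lambda}$ with the invariants functor providing a retraction, whence the unit and counit of the adjunction are isomorphisms. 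Composing the three equivalences produces the leftmost pair in the theorem; tracing through the definitions, the resulting composite $Mod^{coh,\mathbb{C}^{*}}(D_{h}(\lambda))\to Mod^{f.g.}(U(\mathfrak{g})_{\lambda})$ is just $M\mapsto\Gamma(T^{*}X,M)^{\mathbb{C}^{*}}$, so its compatibility with the rightmost equivalence is automatic.

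I expect the main obstacle to be the careful bookkeeping of $\mathbb{C}^{*}$-equivariance against $h$-adic completion. Because $D_{h}(\lambda)$ and $U_{h,\lambda}$ are only $\mathbb{C}((h))$-algebras, ``$\mathbb{C}^{*}$-equivariant'' must be read through \ref{def:Equivariance-for-O} as compatible structures on the truncations, and one has to verify both that $\Delta$ and the invariants functor commute suitably with the passage to the inverse limit over these truncations, and that $\mathbb{C}^{*}$-equivariance really does force the relevant modules to be algebraic (graded) in $h$ rather than Laurent-completed---this last point is precisely what makes the invariants functor exact and faithful, and it is where \ref{lem:C-star-actions} (existence of an invariant section and finite generation of the invariants) does the essential work.
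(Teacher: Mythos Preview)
Your proposal is correct and uses the same essential ingredients as the paper---the already-established localization theorem for $D_{h}(\lambda)$ together with \ref{lem:C-star-actions}---but packages them a bit differently. The paper works directly with the composite functor $M\mapsto\Gamma(M)^{\mathbb{C}^{*}}$: it shows this is exact (exactness of $\Gamma$ plus exactness of taking $\mathbb{C}^{*}$-invariants) and conservative (via \ref{lem:C-star-actions}), deduces that every equivariant coherent module is generated by its invariant global sections, and then proves finite generation by choosing a finite affine $\mathbb{C}^{*}$-invariant cover and invoking the finite-generation clause of \ref{lem:C-star-actions} on each piece; the inverse is $V\mapsto D_{h}(\lambda)\otimes_{U(\mathfrak{g})_{\lambda}}V$, and the check that these are mutually inverse is declared ``totally standard.'' You instead factor through an intermediate category $Mod^{\mathbb{C}^{*}}(U_{h,\lambda})$, first lifting the known equivalence $(\Delta,\Gamma)$ to equivariant categories and then descending along $(-)^{\mathbb{C}^{*}}$. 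This is a cleaner decomposition, but one phrase needs adjusting: a $\mathbb{C}^{*}$-equivariant $U_{h,\lambda}$-module is not literally ``an honest $\mathbb{Z}$-graded module rather than an $h$-completed one'' (elements such as $\sum_{n\geq0}h^{n}$ lie in $U_{h,\lambda}$ and are not homogeneous). What \ref{lem:C-star-actions} actually gives you is that such a module is generated by finitely many homogeneous---hence, after scaling by powers of $h$, invariant---elements, which is precisely what makes the counit $U_{h,\lambda}\otimes_{U(\mathfrak{g})_{\lambda}}N^{\mathbb{C}^{*}}\to N$ surjective, and then the exact-plus-conservative argument you sketch finishes the job.
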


\begin{proof}
This proof follows the mechanics of the previous argument (which we
will use along the way). In one direction, we have the functor $M\to\Gamma(M)^{\mathbb{C}^{*}}$
which takes $\mathbb{C}^{*}$-equivariant coherent $D_{h}(\lambda)$-
modules to $U(\mathfrak{g})_{\lambda}$ modules. We wish to show that
its image lives inside the category of finitely generated $U(\mathfrak{g})_{\lambda}$
modules. (This argument is more or less standard, but the presence
of the $\mathbb{C}^{*}$-action requires some care). To do so, we
first to show that $\Gamma^{\mathbb{C}^{*}}$ is an exact and conservative
functor. The exactness is clear from the exactness of $\Gamma$ as
taking invariants for a $\mathbb{C}^{*}$-action is exact. To show
that it is conservative, we again only need to show that taking $\mathbb{C}^{*}$
invariants is conservative; this follows from our discussion of $\mathbb{C}^{*}$-actions
in \ref{lem:C-star-actions} (we note that the discussion goes
through in this case, as we are taking the invariants functor on the
category of $U_{h,\lambda}$-modules). Therefore, we conclude that
every $\mathbb{C}^{*}$-equivariant coherent $D_{h}(\lambda)$-module
$M$ is generated by $\mathbb{C}^{*}$-invariant global sections:
let $N$ be the sub-$D_{h}(\lambda)$-module of $M$ generated by
the $\mathbb{C}^{*}$-invariant global sections. Then we have the
exact sequence $0\to N\to M\to M/N\to0$, applying our exact functor
shows that $\Gamma(M/N)^{\mathbb{C}^{*}}=0$, so $M/N$ is $0$ as
required. 

To complete the argument about finite generation, we note that our
module $M$ is locally finitely generated: for any affine open covering
of $T^{*}X$, $\{U_{i}\}$, we have that $M|_{U_{i}}$ is a finitely
generated $D_{h}(\lambda)|_{U_{i}}$-module. Now, we choose an affine,
open, finite $\mathbb{C}^{*}$-invariant cover of $T^{*}X$ (one can
always do this for a normal variety with a $\mathbb{C}^{*}$-action,
although in this case it is obvious as we can just take an affine
cover of $X$ and pull back to $T^{*}X$). Then for each $M|_{U_{i}}$,
we have that $(M|_{U_{i}})^{\mathbb{C}^{*}}$ is finitely generated
as a $D_{h}(\lambda)^{\mathbb{C}^{*}}|_{U_{i}}$-module by \ref{lem:C-star-actions}.
By the above, we can choose finitely many $\mathbb{C}^{*}$-invariant
global sections that restrict to generators of $(M|_{U_{i}})^{\mathbb{C}^{*}}$.
By the finiteness of the cover, we have found finitely many global
sections which generate the $D_{h}(\lambda)^{\mathbb{C}^{*}}$-module
$M^{\mathbb{C}^{*}}$. Therefore, these elements generate the $U(\mathfrak{g})$-module
$\Gamma(M)^{\mathbb{C}^{*}}$. 

Now, the functor in the opposite direction is given by $V\to D_{h}(\lambda)\otimes_{U(\mathfrak{g})_{\lambda}}V$.
This is clearly a (quasi)coherent, $\mathbb{C}^{*}$-equivariant $D_{h}(\lambda)$-module
(with the $\mathbb{C}^{*}$-action given via the one on $D_{h}(\lambda)$).
Now the proof that these two functors are inverse is totally standard. 
\end{proof}
Our goal in the rest of this section is to explain how localization
works when one replaces the usual $\mathbb{C}^{*}$-action with the
action that one needs to study the finite $W$-algebras. We note that
the above proof doesn't depend on the particular $\mathbb{C}^{*}$-action,
but that both the algebra of invariants and the sheaf of invariant
operators do. 

Fix a nilpotent element $e\in\mathfrak{g}$. As recalled above in
\ref{sec:W-algebras-and-quantum}, there is a natural homomorphism
$\gamma:\mathbb{C}^{*}\to G$, which leads to a homomorphism $\rho:\mathbb{C}^{*}\to\mathbb{C}^{*}\times G$,
defined as 
\[
\rho(t)=(t^{-2},\gamma(t))
\]
Now, for any $\lambda$ as above, the sheaf $\mathcal{D}_{h}(\lambda)$
is equivariant over $\mathbb{C}^{*}\times G$, where the $\mathbb{C}^{*}$
action is the one described above, and the $G$ action is the natural
one on differential operators. Therefore, we may restrict this action
to $\mathbb{C}^{*}$ via $\rho$ to obtain our new $\mathbb{C}^{*}$
action on $D_{h}(\lambda)$. 

Via the adjoint action of $G$ on $\mathfrak{g}$, and the natural
$\mathbb{C}^{*}$ action on $U_{h}(\mathfrak{g})$ via the grading,
we obtain an action of $\mathbb{C}^{*}\times G$ on $U_{h}(\mathfrak{g})$.
Again restricting via $\rho$ we obtain our required $\mathbb{C}^{*}$-action
on $U_{h}(\mathfrak{g})$. It is worth describing this action explicitly:
we have the decomposition $\mathfrak{g}=\oplus\mathfrak{g}(i)$ which
was the weight decomposition for our chosen $\mathfrak{sl}_{2}$-triple.
Then for $g\in\mathfrak{g}(i)$, we put $\sigma_{t}(g)=t^{i+2}g$,
and we let $\sigma_{t}(h)=t^{2}h$, and extend this to all of $U_{h}(\mathfrak{g})$
in the natural way. This corresponds to the Kazhdan filtration on
$U(\mathfrak{g})$. 

Because $h$ has degree 2, we work from now on with the extended ring
$U_{h}(\mathfrak{g})\otimes_{\mathbb{C}((h))}\mathbb{C}((h^{1/2}))$,
and we similarly extend the sheaf $D_{h}(\lambda)$. 
\begin{lem}
\label{lem:This-action-preserves}This action preserves the ideal
$J_{\lambda}\subset U_{h}(\mathfrak{g})$. 
\end{lem}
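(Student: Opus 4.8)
The plan is to reduce the statement to two facts already in hand: the description $J_{\lambda}=I_{\lambda}((h))$ with $I_{\lambda}$ an ideal of the center $Z(U\mathfrak{g})$, and the identification of $\sigma_{t}$ restricted to the copy of $U(\mathfrak{g})$ inside $U_{h}(\mathfrak{g})$ (the subalgebra generated by $h^{-1}\mathfrak{g}$) with the adjoint action of $\gamma(t)$.

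First I would pin down $\sigma_{t}|_{U(\mathfrak{g})}$. Using the explicit formulas $\sigma_{t}(g)=t^{i+2}g$ for $g\in\mathfrak{g}(i)$ and $\sigma_{t}(h)=t^{2}h$, together with the fact that $\sigma_{t}$ is an algebra automorphism, one computes
\[
\sigma_{t}(h^{-1}g)=\sigma_{t}(h)^{-1}\sigma_{t}(g)=t^{-2}\cdot t^{i+2}\cdot h^{-1}g=t^{i}\,h^{-1}g .
\]
Since $\gamma(t)=\tilde\gamma\!\begin{pmatrix}t&0\\0&t^{-1}\end{pmatrix}$ acts on the $\mathrm{ad}(h)$-weight space $\mathfrak{g}(i)$ by the scalar $t^{i}$, this says precisely that $\sigma_{t}$ agrees with $\mathrm{Ad}(\gamma(t))$ on $h^{-1}\mathfrak{g}$; as $U(\mathfrak{g})$ is generated by $h^{-1}\mathfrak{g}$, I get $\sigma_{t}|_{U(\mathfrak{g})}=\mathrm{Ad}(\gamma(t))$. (The extra $+2$ in the exponent is exactly what cancels the scaling of $h^{-1}$, so that $h^{-1}\mathfrak{g}$ is acted on only through $\mathrm{Ad}(\gamma(t))$.)

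Next, since $G$ is connected we have $Z(U\mathfrak{g})=U(\mathfrak{g})^{G}$, so $\mathrm{Ad}(\gamma(t))$ fixes $Z(U\mathfrak{g})$, and in particular the ideal $I_{\lambda}$, pointwise. Finally I would take a typical element of $J_{\lambda}=I_{\lambda}((h))\subset U(\mathfrak{g})((h))=U_{h}(\mathfrak{g})$, namely an $h$-adically convergent series $\sum_{n\geq N}a_{n}h^{n}$ with $a_{n}\in I_{\lambda}$, and apply $\sigma_{t}$ termwise (legitimate, as $\sigma_{t}$ is $h$-adically continuous): the coefficients $a_{n}$ are unchanged while each $h^{n}$ picks up $t^{2n}$, so the result is again an element of $I_{\lambda}((h))=J_{\lambda}$. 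Hence $\sigma_{t}(J_{\lambda})\subseteq J_{\lambda}$ for all $t$, and feeding in $\sigma_{t^{-1}}=\sigma_{t}^{-1}$ promotes this to $\sigma_{t}(J_{\lambda})=J_{\lambda}$. The same computation is insensitive to the base change to $\mathbb{C}((h^{1/2}))$ in which the statement is ultimately used.

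A cleaner but less self-contained alternative is simply to note that $\Phi_{\lambda}\colon U_{h}(\mathfrak{g})\to\Gamma(T^{*}X,D_{h}(\lambda))$ is $\mathbb{C}^{*}\times G$-equivariant by construction (the realization of $D_{h}(\lambda)$ as a Hamiltonian reduction is itself $\mathbb{C}^{*}\times G$-equivariant), so after restriction along $\rho$ it is $\sigma$-equivariant and its kernel $J_{\lambda}$ is automatically $\sigma$-stable. I do not anticipate a genuine obstacle here: the only point carrying any content is the identification $\sigma_{t}|_{U(\mathfrak{g})}=\mathrm{Ad}(\gamma(t))$, which just requires unwinding the definition of $\rho$ and of the two commuting actions; everything else is bookkeeping about $h$-adic convergence and the description of $J_{\lambda}$.
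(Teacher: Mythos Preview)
Your argument is correct, but it is genuinely different from the paper's. The paper proceeds by exhibiting explicit generators of $I_{\lambda}$ as Casimir-type elements $\sum Tr\,\phi(X_{i_{1}}\cdots X_{i_{n}})\,\tilde X_{i_{1}}\cdots\tilde X_{i_{n}}-\text{(scalar)}$, and then checks by a weight count that only terms with total $\mathrm{ad}(h)$-weight zero survive, so that each generator is literally $\mathbb{C}^{*}$-invariant. You instead observe once and for all that on the copy of $U(\mathfrak{g})$ generated by $h^{-1}\mathfrak{g}$ the action $\sigma_{t}$ collapses to $\mathrm{Ad}(\gamma(t))$, which fixes $Z(U\mathfrak{g})$ and hence $I_{\lambda}$ pointwise; the stability of $J_{\lambda}$ then follows because $\sigma_{t}$ is an algebra automorphism sending a generating set of the ideal into itself. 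Your route is shorter and avoids quoting the explicit description of central generators; the paper's route has the mild advantage that it produces, in the course of the proof, the concrete $\mathbb{C}^{*}$-invariant generators that are invoked again in the paragraph immediately following the lemma (to identify $J_{\lambda}\cap U_{h}(\mathfrak{g})^{\mathbb{C}^{*}}$ with the ideal generated by $I_{\lambda}$). Your argument yields that same consequence, since it shows $I_{\lambda}$ is fixed pointwise, so nothing is lost. One small remark: when you write a general element of $J_{\lambda}$ as $\sum a_{n}h^{n}$ with $a_{n}\in I_{\lambda}$, strictly speaking the coefficients lie in the two-sided ideal $U(\mathfrak{g})I_{\lambda}$ rather than in $I_{\lambda}$ itself; but since $\sigma_{t}$ restricts to an automorphism of $U(\mathfrak{g})$ fixing $I_{\lambda}$, it preserves $U(\mathfrak{g})I_{\lambda}$ as well, and your conclusion stands.
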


\begin{proof}
To show this, we describe a generating set for $J_{\lambda}$ as follows:
the Killing form is a perfect pairing between $\mathfrak{g}(i)$ and
$\mathfrak{g}(-i)$. We choose bases in these spaces which are dual
to each other; this then gives a basis of $\mathfrak{g}$, for a basis
element $X_{i}$ we let $\tilde{X}_{i}$ denote its dual element.
Let $\phi$ be any finite dimensional representation of $\mathfrak{g}$.
According to \cite{key-9} (prop 5.32, proof of theorem 5.44), a generating
set for the ideal $I_{\lambda}\subseteq U(\mathfrak{g})$ is given
by elements of the form $\sum_{i_{1},...,i_{n}}Tr\phi(X_{i_{i}}***X_{i_{n}})(\tilde{X}_{i_{1}}***\tilde{X}_{i_{n}})-\sum_{i_{1},...,i_{n}}Tr\phi(X_{i_{i}}***X_{i_{n}})(\lambda(\tilde{X}_{i_{1}})***\lambda(\tilde{X}_{i_{n}}))$.
Therefore, we conclude that a generating set for $J_{\lambda}$ is
given by $\sum_{i_{1},...,i_{n}}h^{-n}Tr\phi(X_{i_{i}}***X_{i_{n}})(\tilde{X}_{i_{1}}***\tilde{X}_{i_{n}})-\sum_{i_{1},...,i_{n}}Tr\phi(X_{i_{i}}***X_{i_{n}})(\lambda(\tilde{X}_{i_{1}})***\lambda(\tilde{X}_{i_{n}}))$.
Now, the only way that $Tr\phi(X_{i_{1}}***X_{i_{n}})$ can be nonzero
is if, letting $X_{i_{k}}\in\mathfrak{g}(j_{k})$, $\sum_{i=1}^{n}j_{k}=0$:
this follows from the fact that the representation $\phi$ inherits
a grading from the same $\mathfrak{sl}_{2}$-action; and any matrix
that shifts the grading non-trivially is traceless. Now, since $\tilde{X}_{i_{k}}$
lives in degree $-j_{k}$, it must be that the element $\tilde{X}_{i_{1}}***\tilde{X}_{i_{n}}$
also has degree $0$ with respect to this $\mathfrak{sl}_{2}$ action.
By the definition of the $\mathbb{C}^{*}$-action we are working with,
we see that $h^{-(j_{k}+2)/2}\tilde{X}_{i_{k}}$ is $\mathbb{C}^{*}$-invariant,
and so it follows that the generating set considered above is in fact
$\mathbb{C}^{*}$-invariant; and so, therefore, is the ideal $J_{\lambda}$. 
\end{proof}
We consider now the ring of invariants with respect to this action.
Clearly, this ring consists of series, infinite in positive powers
of $h$, whose terms are products of elements of the form $h^{-(i+2)/2}g$
with $g\in\mathfrak{g}(i)$. Therefore, this ring is not isomorphic
to the enveloping algebra $U(\mathfrak{g})$. In particular, it will
include infinite series whose terms come from $\oplus_{i\leq-3}\mathfrak{g}(i)$
(which, we note, is a subalgebra of $\mathfrak{m}_{l}$), and in fact,
it is clear that this algebra is the completion of $U(\mathfrak{g})$
with respect to the nilpotent Lie subalgebra $\oplus_{i\leq-3}\mathfrak{g}(i)$
(one can consult \cite{key-4} section 5 for details on this notion
of completion; however, we will not use this). Therefore, it follows
from our computation of the global sections of $D_{h}(\lambda)$ above
that $\Gamma(T^{*}X,D_{h}(\lambda)^{\mathbb{C}^{*}})\tilde{=}U_{h}(\mathfrak{g})^{\mathbb{C}^{*}}/J_{\lambda}\cap U_{h}(\mathfrak{g})^{\mathbb{C}^{*}}$.
To quantify this, we consider the copy of $U(\mathfrak{g})\subseteq U_{h}(\mathfrak{g})^{\mathbb{C}^{*}}$
(just the algebra generated by $h^{-(i+2)/2}g$ for $g\in\mathfrak{g}(i)$);
and we note that $J_{\lambda}\cap U_{h}(\mathfrak{g})^{\mathbb{C}^{*}}$
is generated by the elements given in the proof of \ref{lem:This-action-preserves},
which are simply generators for the ideal $I_{\lambda}\subseteq U(\mathfrak{g})$.
So the ideal $J_{\lambda}\cap U_{h}(\mathfrak{g})^{\mathbb{C}^{*}}$
is the ideal in $U_{h}(\mathfrak{g})^{\mathbb{C}^{*}}$ generated
by $I_{\lambda}$. 

With this in hand, we can repeat verbatim the proof of \ref{thm:Loc-on-Flag}
and obtain
\begin{thm}
For $\lambda$ anti-dominant, we have equivalences of categories 
\[
Mod^{f.g.}(U_{h}(\mathfrak{g})^{\mathbb{C}^{*}}/J_{\lambda}\cap U_{h}(\mathfrak{g})^{\mathbb{C}^{*}})\tilde{\to}Mod^{coh,\mathbb{C}^{*}}(D_{h}(\lambda))
\]
\[
Mod(U_{h}(\mathfrak{g})^{\mathbb{C}^{*}}/J_{\lambda}\cap U_{h}(\mathfrak{g})^{\mathbb{C}^{*}})\tilde{\to}Mod^{qc,\mathbb{C}^{*}}(D_{h}(\lambda))
\]
 
\end{thm}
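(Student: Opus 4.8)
The strategy is to transcribe the proof of Theorem~\ref{thm:Loc-on-Flag} essentially verbatim, with the fibrewise dilation $\mathbb{C}^{*}$-action replaced throughout by the Kazhdan action (the restriction of the $\mathbb{C}^{*}\times G$-equivariant structure along $\rho(t)=(t^{-2},\gamma(t))$), and with $U(\mathfrak{g})_{\lambda}$ replaced by $A:=U_{h}(\mathfrak{g})^{\mathbb{C}^{*}}/(J_{\lambda}\cap U_{h}(\mathfrak{g})^{\mathbb{C}^{*}})$. The two functors are $M\mapsto\Gamma(T^{*}X,M)^{\mathbb{C}^{*}}$ and $V\mapsto D_{h}(\lambda)\otimes_{A}V$, the latter using the identification $\Gamma(T^{*}X,D_{h}(\lambda))^{\mathbb{C}^{*}}=\Gamma(T^{*}X,D_{h}(\lambda)^{\mathbb{C}^{*}})\cong A$ established just above (together with Lemma~\ref{lem:This-action-preserves}, which guarantees the action descends modulo $J_{\lambda}$). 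These form an adjoint pair because $V$ carries the trivial $\mathbb{C}^{*}$-action, so a $\mathbb{C}^{*}$-equivariant $D_{h}(\lambda)$-map out of $D_{h}(\lambda)\otimes_{A}V$ is the same datum as an $A$-linear map into $\Gamma(T^{*}X,M)^{\mathbb{C}^{*}}$.

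First I would check that $\Gamma^{\mathbb{C}^{*}}:=\Gamma(T^{*}X,-)^{\mathbb{C}^{*}}$ is exact and conservative on $Mod^{qc,\mathbb{C}^{*}}(D_{h}(\lambda))$. Exactness is immediate from the exactness of $\Gamma$ on $Mod^{qc}(D_{h}(\lambda))$ (Theorem~\ref{thm:Converativity-for-asymptotic}, which is the only place anti-dominance of $\lambda$ enters) together with the exactness of $(-)^{\mathbb{C}^{*}}$. Conservativity requires the analogue of Lemma~\ref{lem:C-star-actions} for the Kazhdan action; inspecting its proof, the only property of the $\mathbb{C}^{*}$-action actually used is that it acts on the deformation parameter through a character $t^{n}$ with $n>0$, and this is precisely what the passage to scalars $\mathbb{C}((h^{1/2}))$ secures (so that $h^{1/2}$ has weight $1$). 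Granting this, the usual argument shows every object is generated by its $\mathbb{C}^{*}$-invariant global sections: if $N\subseteq M$ is the $D_{h}(\lambda)$-submodule they generate, then $\Gamma^{\mathbb{C}^{*}}(M/N)=0$, hence $M/N=0$.

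Next I would prove that $\Gamma^{\mathbb{C}^{*}}(M)$ is finitely generated over $A$ for coherent $M$. The one step not literally identical to the dilation case is the choice of a finite, affine, $\mathbb{C}^{*}$-invariant open cover of $T^{*}X$: the pullback of an arbitrary affine cover of $G/B$ is no longer $\mathbb{C}^{*}$-invariant, since the Kazhdan action moves $G/B$ through $\gamma$. I would instead pick a finite affine $\mathbb{C}^{*}$-invariant cover $\{V_{i}\}$ of $G/B$ --- such a cover exists by Sumihiro's theorem, $G/B$ being a normal projective variety with a $\mathbb{G}_{m}$-action --- and set $U_{i}=\pi^{-1}(V_{i})$, which is affine and $\mathbb{C}^{*}$-invariant because $\pi$ is $\mathbb{C}^{*}$-equivariant. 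Over each $U_{i}$, Lemma~\ref{lem:C-star-actions} gives that $(M|_{U_{i}})^{\mathbb{C}^{*}}$ is finitely generated over $D_{h}(\lambda)^{\mathbb{C}^{*}}|_{U_{i}}$; combining this with generation by global invariant sections and the finiteness of the cover extracts finitely many global invariant sections generating $M^{\mathbb{C}^{*}}$, whence $\Gamma^{\mathbb{C}^{*}}(M)$ is finitely generated over $A$.

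It then remains to see that the unit and counit of the adjunction are isomorphisms, which is the standard argument: conservativity and exactness of $\Gamma^{\mathbb{C}^{*}}$ together with the generation statement reduce both verifications to the case of a free module, where they follow from the global-sections identification, and the quasicoherent case then follows formally by passing to ind-categories. I expect the entire difficulty to be concentrated in the two transferred ingredients --- the validity of Lemma~\ref{lem:C-star-actions} for the Kazhdan action, and the existence of a finite affine $\mathbb{C}^{*}$-invariant cover of $T^{*}X$ --- with the remainder being a word-for-word repetition of the proof of Theorem~\ref{thm:Loc-on-Flag}.
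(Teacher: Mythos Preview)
Your proposal is correct and matches the paper's approach exactly: the paper's proof is the single sentence ``we can repeat verbatim the proof of Theorem~\ref{thm:Loc-on-Flag},'' and you have simply unpacked what that repetition entails. You are in fact more careful than the paper, since you isolate the two places where ``verbatim'' needs a word of justification --- that Lemma~\ref{lem:C-star-actions} only uses positivity of the weight of $h$ (secured by the base change to $\mathbb{C}((h^{1/2}))$), and that the $\mathbb{C}^{*}$-invariant affine cover of $T^{*}X$ must now be obtained via Sumihiro rather than by pulling back an arbitrary affine cover of $G/B$ --- both of which the paper leaves implicit in the parenthetical ``one can always do this for a normal variety with a $\mathbb{C}^{*}$-action'' inside the proof of Theorem~\ref{thm:Loc-on-Flag}.
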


On the face of it, this theorem is not very useful, because of our
lack of knowledge of the category appearing on the left. However,
this category becomes quite tractable after one additional modification:
we have the adjoint action of the group $M_{l}$ on the algebra $U_{h}(\mathfrak{g})^{\mathbb{C}^{*}}/J_{\lambda}\cap U_{h}(\mathfrak{g})^{\mathbb{C}^{*}}$,
and we can consider the category $Mod_{\chi}^{M_{l},f.g.}(U_{h}(\mathfrak{g})^{\mathbb{C}^{*}}/J_{\lambda}\cap U_{h}(\mathfrak{g})^{\mathbb{C}^{*}})$
of $\chi$-twisted $M_{l}$-equivariant finitely generated modules.
It is easy to see that this is just the category of modules $V$ such
that for all $m\in\mathfrak{m}_{l}$, $m-\chi(m)$ acts locally nilpotently
on $V$. Now, by definition, $\chi|_{\oplus_{i\leq-3}\mathfrak{g}(i)}=0$.
Therefore, for a module in $Mod_{\chi}^{M_{l},f.g.}(U_{h}(\mathfrak{g})^{\mathbb{C}^{*}}/J_{\lambda}\cap U_{h}(\mathfrak{g})^{\mathbb{C}^{*}})$,
all of the infinite series in the ring $U_{h}(\mathfrak{g})^{\mathbb{C}^{*}}/J_{\lambda}\cap U_{h}(\mathfrak{g})^{\mathbb{C}^{*}}$
simply act via finitely many terms. Therefore, combining this with
our above description of $J_{\lambda}\cap U_{h}(\mathfrak{g})^{\mathbb{C}^{*}}$,
we see that we have a canonical equivalence of categories 
\[
Mod_{\chi}^{M_{l},f.g.}(U_{h}(\mathfrak{g})^{\mathbb{C}^{*}}/J_{\lambda}\cap U_{h}(\mathfrak{g})^{\mathbb{C}^{*}})\tilde{\to}Mod_{\chi}^{M_{l},f.g.}(U(\mathfrak{g})_{\lambda})
\]

So, a localization theorem for this category would need to consider
$M_{l}$-equivariant $D_{h}(\lambda)$-modules. It is clear, by restriction
of the $G$-action, that there is an $M_{l}$-equivariant structure
on the algebra $D_{h}(\lambda)$. Unfortunately, the above $\mathbb{C}^{*}$-action
and the $M_{l}$ action do not commute. However, we can express the
structure we want by looking at the adjoint action of the one parameter
group $\mathbb{C}^{*}$ on the group $M_{l}$, via the morphism $\gamma(t)$.
This allows us to form the semi-direct product $M_{l}\rtimes\mathbb{C}^{*}$.
Then, adding an $M_{l}$-equivariance condition to the category on
the right of the above theorem is the same as looking $D_{h}(\lambda)$-modules
that are equivariant with respect to $M_{l}\rtimes\mathbb{C}^{*}$.
If we consider $\chi$ as a map $\chi:\mathfrak{m}\to\mathbb{C}[[h]]$,
then we can consider the category of $M_{l}\rtimes\mathbb{C}^{*}$-equivariant
modules over $D_{h}(\lambda)$ so that the $M_{l}$ action has twist
$\chi$, in the sense of \ref{sec:Hamiltonian-reduction}. 

For any module in $M\in Mod_{\chi}^{M_{l}\rtimes\mathbb{C}^{*},coh}(D_{h}(\lambda))$,
its $\mathbb{C}^{*}$-invariant global sections are a module over
$U_{h}(\mathfrak{g})^{\mathbb{C}^{*}}$. The condition that $M$ be
$M_{l}$-equivariant with twist $\chi$ ensures that $\Gamma(M)^{\mathbb{C}^{*}}$
admits an $M_{l}$-action which integrates the $\chi$-twisted action
of $\mathfrak{m}_{l}\subset U_{h}(\mathfrak{g})^{\mathbb{C}^{*}}$,
where the embedding is chosen as above; i.e., taking $m\in\mathfrak{m}_{l}$
to $h^{-(i+2)/2}m$. So, combining the above observations with the
$D_{h}(\lambda)$-affineness of $T^{*}X$ gives: 
\begin{thm}
\label{thm:Localization-with-actions}For $\lambda$ anti-dominant,
we have equivalences of categories 
\[
Mod_{\chi}^{M_{l},f.g.}(U(\mathfrak{g})_{\lambda})\tilde{\to}Mod_{\chi}^{M_{l}\rtimes\mathbb{C}^{*},coh}(D_{h}(\lambda))
\]
\[
Mod_{\chi}^{M_{l}}(U(\mathfrak{g})_{\lambda})\tilde{\to}Mod_{\chi}^{M_{l}\rtimes\mathbb{C}^{*},qc}(D_{h}(\lambda))
\]
 
\end{thm}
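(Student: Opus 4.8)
The plan is to deduce the theorem from the (unlabeled) $\mathbb{C}^{*}$-equivariant localization theorem stated just above, whose equivalences
\[
Mod^{f.g.}(U_{h}(\mathfrak{g})^{\mathbb{C}^{*}}/J_{\lambda}\cap U_{h}(\mathfrak{g})^{\mathbb{C}^{*}})\tilde{\to}Mod^{coh,\mathbb{C}^{*}}(D_{h}(\lambda))
\]
(and its quasicoherent analogue) are implemented by $M\mapsto\Gamma(T^{*}X,M)^{\mathbb{C}^{*}}$ and by localization $V\mapsto D_{h}(\lambda)\otimes_{U_{h,\lambda}}V$. I would simply restrict both functors to the subcategories of objects carrying a compatible $M_{l}$-equivariant structure with twist $\chi$, and then invoke the identification $Mod_{\chi}^{M_{l},f.g.}(U_{h}(\mathfrak{g})^{\mathbb{C}^{*}}/J_{\lambda}\cap U_{h}(\mathfrak{g})^{\mathbb{C}^{*}})\tilde{\to}Mod_{\chi}^{M_{l},f.g.}(U(\mathfrak{g})_{\lambda})$ already established in the preceding paragraph. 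So the real content is the compatibility of the two functors with the $M_{l}\rtimes\mathbb{C}^{*}$-structure.

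First I would pin down the group and its action: $\mathbb{C}^{*}$ acts on $M_{l}$ by $t\cdot m=\gamma(t)m\gamma(t)^{-1}$, which is well defined because $\mathrm{Ad}(\gamma(t))$ preserves the grading on $\mathfrak{g}$ and hence $\mathfrak{m}_{l}$; and the assignment $(m,t)\mapsto(t^{-2},m\gamma(t))$ defines a homomorphism $M_{l}\rtimes\mathbb{C}^{*}\to\mathbb{C}^{*}\times G$. Restricting the $\mathbb{C}^{*}\times G$-equivariant structure on $D_{h}(\lambda)$ along this map makes $D_{h}(\lambda)$ an $M_{l}\rtimes\mathbb{C}^{*}$-equivariant sheaf, and the comoment map $\rho\colon U_{h}(\mathfrak{m}_{l})(0)\to D_{h}(\lambda)$, $m\mapsto\xi_{m}$, is a morphism of such. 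For $M\in Mod_{\chi}^{M_{l}\rtimes\mathbb{C}^{*},coh}(D_{h}(\lambda))$ the module $\Gamma(T^{*}X,M)^{\mathbb{C}^{*}}$ is then a module over $\Gamma(T^{*}X,D_{h}(\lambda))^{\mathbb{C}^{*}}\tilde{=}U_{h}(\mathfrak{g})^{\mathbb{C}^{*}}/J_{\lambda}\cap U_{h}(\mathfrak{g})^{\mathbb{C}^{*}}$, and it inherits the $M_{l}$-action from the $M_{l}\rtimes\mathbb{C}^{*}$-structure on $M$ (since the two groups are carried by a single equivariant structure, taking $\mathbb{C}^{*}$-invariants does not destroy it). The key point is to check that the twist-$\chi$ condition $h\beta(x)-\rho(x)=\chi(x)$ of Definition \ref{def:twist} becomes, after rescaling by the appropriate (half-integral) power of $h$ to descend to $\mathbb{C}^{*}$-invariants, precisely the assertion that the $\mathfrak{m}_{l}$-action on $\Gamma(M)^{\mathbb{C}^{*}}$ through the embedding $m\mapsto h^{-(i+2)/2}m$ (for $m\in\mathfrak{g}(i)\cap\mathfrak{m}_{l}$) differs from $\chi$ by a locally nilpotent operator; since $M_{l}$ is connected and unipotent this is the same as saying the action integrates to the twist-$\chi$ $M_{l}$-action appearing on the algebra side.

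Conversely, starting from $V$ in the algebra-side category (equivalently a twist-$\chi$ $M_{l}$-equivariant $U(\mathfrak{g})_{\lambda}$-module), I would equip the localization with the diagonal $M_{l}$-equivariant structure coming from $M_{l}$ acting both on $D_{h}(\lambda)$ and on $V$; twists are additive under this tensor product, and the shift built into $L_{\chi}=D_{h}(\lambda)/\sum_{x}D_{h}(\lambda)\cdot(\xi_{x}-\chi(x))$ moves the twist to $\chi$, so that the output indeed lands in $Mod_{\chi}^{M_{l}\rtimes\mathbb{C}^{*},coh}(D_{h}(\lambda))$. That the two functors remain mutually inverse is then formal: the adjunction unit and counit constructed in the $\mathbb{C}^{*}$-equivariant theorem are natural transformations, hence automatically morphisms of $M_{l}$-equivariant objects, so the isomorphisms are inherited; likewise the tracking of coherence and finite generation is unchanged. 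The quasicoherent statement follows by passing to ind-categories on both sides.

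The step I expect to be the main obstacle is precisely this last bit of the twist dictionary. Because the Kazhdan $\mathbb{C}^{*}$-action assigns $\mathfrak{g}(i)$ the weight $i+2$, the embedding $\mathfrak{m}_{l}\hookrightarrow U_{h}(\mathfrak{g})^{\mathbb{C}^{*}}$ involves half-integral powers of $h$ (forcing the passage to $\mathbb{C}((h^{1/2}))$ already made above), and one must verify that the $h$-rescaled notion of twist of Definition \ref{def:twist}---the definition the referee corrected---transforms, under $\Gamma(-)^{\mathbb{C}^{*}}$, into the plain ``$m-\chi(m)$ acts locally nilpotently'' condition that defines $Mod_{\chi}^{M_{l},f.g.}$ on the algebra side, with $\chi$ (which is supported on $\mathfrak{g}(-2)$, where the relevant power of $h$ is trivial) appearing with no spurious $h$-factor. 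Once this is nailed down, the rest is a formal consequence of the already-proven $\mathbb{C}^{*}$-equivariant localization theorem and the previously established identification of the two algebra-side module categories.
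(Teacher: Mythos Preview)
Your approach is essentially identical to the paper's: the theorem is deduced from the preceding $\mathbb{C}^{*}$-equivariant localization theorem by restricting both sides to the full subcategories of objects with a compatible twist-$\chi$ $M_{l}$-equivariant structure, and then invoking the already-established identification $Mod_{\chi}^{M_{l},f.g.}(U_{h}(\mathfrak{g})^{\mathbb{C}^{*}}/J_{\lambda}\cap U_{h}(\mathfrak{g})^{\mathbb{C}^{*}})\tilde{\to}Mod_{\chi}^{M_{l},f.g.}(U(\mathfrak{g})_{\lambda})$. The paper's argument is in fact just the two paragraphs preceding the theorem statement, and the only substantive point it flags is exactly the one you single out as the main obstacle: that the twist-$\chi$ condition on the sheaf side ensures $\Gamma(M)^{\mathbb{C}^{*}}$ carries an $M_{l}$-action integrating the $\chi$-twisted $\mathfrak{m}_{l}$-action via the embedding $m\mapsto h^{-(i+2)/2}m$.
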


\section{Localization for W-Algebras and the Skryabin Equivalence}

In this section we weave together the major threads of the paper and
prove the main results. We begin by applying the Hamiltonian reduction
formalism of \ref{sec:Hamiltonian-reduction} to the case of
semisimple Lie algebras and flag varieties. 

We let $O_{h,\mathfrak{g}^{*}}$ denote the sheafification of $U_{h}(\mathfrak{g})(0)$
over the variety $\mathfrak{g}^{*}$. This is a quantization of a
smooth Poisson variety, which admits an action of $G$. In particular,
after fixing a nilpotent element $e\in\mathfrak{g}$, we obtain the
action of $M_{l}$ on $O_{h,\mathfrak{g}^{*}}$; as detailed in the
previous section, we have also an action of $M_{l}\rtimes\mathbb{C}^{*}$
The natural map $U_{h}(\mathfrak{m}_{l})(0)\to U_{h}(\mathfrak{g})(0)$
upon sheafification gives a comoment map $U_{h}(\mathfrak{m}_{l})(0)\to O_{h,\mathfrak{g}^{*}}$.
Consider the character on $U_{h}(\mathfrak{m}_{l})(0)$ determined
by $\chi:\mathfrak{m}_{l}\to\mathbb{C}$, which we will also denote
it by $\chi$. Therefore we can consider $M_{l}\rtimes\mathbb{C}^{*}$-equivariant
modules over $O_{h,\mathfrak{g}^{*}}$ whose $M_{l}$-action has twist
$\chi$. We have 
\begin{lem}
The functor $\Gamma^{\mathbb{C}^{*}}$ induces an equivalence of categories
\[
Mod_{\chi}^{M_{l}\rtimes\mathbb{C}^{*},coh}(O_{h,\mathfrak{g}^{*}}[h^{-1}])\tilde{\to}Mod_{\chi}^{M_{l},f.g.}(U(\mathfrak{g}))
\]
The analogous result holds upon replacing coh with qcoh on the left
and finitely generated modules with all modules on the right. 
\end{lem}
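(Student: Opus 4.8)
The plan is to deduce this lemma from the affine version of the three equivalences already established, working over $\mathfrak{g}^{*}$ in place of $T^{*}X$, where the geometry trivializes because $\mathfrak{g}^{*}$ is affine. Concretely, I would factor $\Gamma^{\mathbb{C}^{*}}$ as the composite of: the naive localization equivalence for the quantization $O_{h,\mathfrak{g}^{*}}$; passage to $\mathbb{C}^{*}$-invariants exactly as in the proof of \ref{thm:Loc-on-Flag}; and the collapse of $U_{h}(\mathfrak{g})^{\mathbb{C}^{*}}$ onto $U(\mathfrak{g})$ on $\chi$-twisted $M_{l}$-equivariant modules, exactly as in the discussion preceding \ref{thm:Localization-with-actions}.

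For the first step: since $\mathfrak{g}^{*}$ is an affine variety, $H^{i}(\mathfrak{g}^{*},\mathcal{F})=0$ for all $i>0$ and every quasicoherent $\mathcal{F}$, so by \ref{lem:basic-vanishing} the functor $\Gamma$ has no higher cohomology on $Mod^{coh}(O_{h,\mathfrak{g}^{*}})$ and is exact on $Mod^{qc}(O_{h,\mathfrak{g}^{*}}[h^{-1}])$. Conservativity is equally immediate: for a nonzero coherent module choose a lattice; its reduction mod $h$ is a nonzero coherent sheaf on the affine $\mathfrak{g}^{*}$ and hence has global sections, and the usual $h$-adic bookkeeping (bounded $h$-torsion plus Nakayama) propagates this to the module and then to its localization. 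With $\Gamma(O_{h,\mathfrak{g}^{*}}[h^{-1}])=U_{h}(\mathfrak{g})$ and $M\mapsto O_{h,\mathfrak{g}^{*}}[h^{-1}]\otimes_{U_{h}(\mathfrak{g})}M$ as inverse, this gives $Mod^{qc}(O_{h,\mathfrak{g}^{*}}[h^{-1}])\iso Mod(U_{h}(\mathfrak{g}))$, restricting to coherent versus finitely generated modules.

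For the second step: equip all objects with the $\mathbb{C}^{*}$-action obtained by restricting the $\mathbb{C}^{*}\times G$-action along $\rho(t)=(t^{-2},\gamma(t))$ (after extending scalars to $\mathbb{C}((h^{1/2}))$, since $h$ has degree $2$). As $\mathbb{C}^{*}$ is reductive, $(-)^{\mathbb{C}^{*}}$ is exact; by \ref{lem:C-star-actions} it is conservative on the relevant categories; and since $\mathfrak{g}^{*}$ is itself affine, the finite-generation argument in the proof of \ref{thm:Loc-on-Flag} needs no open cover at all. This yields $Mod^{coh,\mathbb{C}^{*}}(O_{h,\mathfrak{g}^{*}}[h^{-1}])\iso Mod^{f.g.}(U_{h}(\mathfrak{g})^{\mathbb{C}^{*}})$ and the analogous statement for quasicoherent/all modules. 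For the third step, one must work $M_{l}\rtimes\mathbb{C}^{*}$-equivariantly, since the $M_{l}$- and $\mathbb{C}^{*}$-actions do not commute; then for $M$ in $Mod_{\chi}^{M_{l}\rtimes\mathbb{C}^{*},coh}(O_{h,\mathfrak{g}^{*}}[h^{-1}])$ the $\mathbb{C}^{*}$-invariant global sections carry an $M_{l}$-action integrating the $\chi$-twisted action of $\mathfrak{m}_{l}\subset U_{h}(\mathfrak{g})^{\mathbb{C}^{*}}$ under the embedding $\mathfrak{g}(i)\ni m\mapsto h^{-(i+2)/2}m$. Since $\chi$ vanishes on $\bigoplus_{i\leq -3}\mathfrak{g}(i)$ and twist $\chi$ forces each $m-\chi(m)$ to act locally nilpotently, every infinite series in $U_{h}(\mathfrak{g})^{\mathbb{C}^{*}}$ — all of which have tails in $\bigoplus_{i\leq -3}\mathfrak{g}(i)$ — acts through finitely many terms on such a module; this is exactly the identification $Mod_{\chi}^{M_{l},f.g.}(U_{h}(\mathfrak{g})^{\mathbb{C}^{*}})\iso Mod_{\chi}^{M_{l},f.g.}(U(\mathfrak{g}))$ used just before \ref{thm:Localization-with-actions}. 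Composing the three equivalences, and checking as there that the localization functor transports the $\chi$-twisted $M_{l}$-structure back, proves the lemma.

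The step I expect to be the main obstacle is the interaction of the two incompatible group actions: one has to verify that $\Gamma^{\mathbb{C}^{*}}$ genuinely produces an honest $M_{l}$-action with twist $\chi$ rather than merely a compatible $\mathfrak{m}_{l}$-action, which is what forces the whole argument to be run $M_{l}\rtimes\mathbb{C}^{*}$-equivariantly, and that this structure is preserved in both directions of the equivalence. Everything else — the cohomology vanishing, conservativity, and the collapse of the completion onto $U(\mathfrak{g})$ — is either immediate from affineness of $\mathfrak{g}^{*}$ or a verbatim repetition of arguments already given above.
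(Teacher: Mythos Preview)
Your proposal is correct and follows essentially the same approach as the paper's proof: both use affineness of $\mathfrak{g}^{*}$ to reduce to modules over $U_{h}(\mathfrak{g})$, then pass to $\mathbb{C}^{*}$-invariants, and finally use the twist-$\chi$ condition together with unipotence of $M_{l}$ to collapse the completion $U_{h}(\mathfrak{g})^{\mathbb{C}^{*}}$ onto $U(\mathfrak{g})$. The paper presents the argument slightly more compactly (it does not isolate the intermediate equivalence with $Mod^{f.g.}(U_{h}(\mathfrak{g})^{\mathbb{C}^{*}})$ as a separate step), and for the inverse functor it emphasizes that local nilpotence of $m-\chi(m)$ lets one extend a $U(\mathfrak{g})$-module uniquely to a $U_{h}(\mathfrak{g})^{\mathbb{C}^{*}}$-module, but the substance is the same.
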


\begin{proof}
As $\mathfrak{g}^{*}$ is an affine variety, the global section functor
is an equivalence from $Mod^{coh}(O_{h,\mathfrak{g}^{*}})$ to $Mod^{f.g.}(U_{h}(\mathfrak{g})(0))$,
and the same holds upon inverting $h$. Let $M(0)$ be an $M_{l}\rtimes\mathbb{C}^{*}$-equivariant
module over $U_{h}(\mathfrak{g})(0)$. Then, as explained above, $M(0)[h^{-1}]^{\mathbb{C}^{*}}$is
a module over a certain completion of $U(\mathfrak{g})$, along an
ideal generated by a Lie-subalgebra of $\mathfrak{m}_{l}$. We thus
have the action of $\mathfrak{m}_{l}$ on $M(0)[h^{-1}]^{\mathbb{C}^{*}}$,
and the condition that $M(0)$ has twist $\chi$ ensures that the
$\chi$-twisted action of $M_{l}$ on $M(0)[h^{-1}]^{\mathbb{C}^{*}}$
integrates to an action of $M_{l}$.

As $M_{l}$ is a unipotent group, this implies that $m-\chi(m)$ acts
locally nilpotently on $M(0)[h^{-1}]^{\mathbb{C}^{*}}$, which implies
that this module is actually a $U(\mathfrak{g})$-module, and so we
see $\Gamma^{\mathbb{C}^{*}}$ takes values in the correct category. 

To obtain the inverse functor, we note that any module in $Mod_{\chi}^{M_{l},f.g.}(U(\mathfrak{g}))$
is necessarily locally finite with respect to the action of $m-\chi(m)$
and so the $U(\mathfrak{g})$-action extends uniquely to the structure
of a module over $U_{h}(\mathfrak{g})^{\mathbb{C}^{*}}$, and therefore
it is the $\mathbb{C}^{*}$-invariants of a unique coherent $\mathbb{C}^{*}$-equivariant
module over $U_{h}(\mathfrak{g})$, which necessarily has an action
of $M_{l}$ with twist $\chi$. Localizing this module over $\mathfrak{g}^{*}$
yields the required inverse functor. 
\end{proof}
Now we apply the machinery of \ref{sec:Hamiltonian-reduction}
\begin{prop}
\label{prop:Ham-reduction-on-g} The assumptions of \ref{prop:Reduction-equivalence}
are satisfied for the action of $M_{l}$ on $O_{h,\mathfrak{g}^{*}}$
with respect to the character $\chi$. Therefore we have a quantization
$O_{h,S}$ of $S$ given by Hamiltonian reduction and an equivalence
of categories 
\[
Mod_{\chi}^{M_{l}\rtimes\mathbb{C}^{*},coh}(O_{h,\mathfrak{g}^{*}}[h^{-1}])\tilde{\to}Mod^{\mathbb{C}^{*},coh}(O_{h,S}[h^{-1}])
\]
Furthermore, there is an equivalence of categories 
\[
Mod^{\mathbb{C}^{*},coh}(O_{h,S}[h^{-1}])\tilde{\to}Mod^{f.g.}(U(\mathfrak{g},e))
\]
The analogous statements hold when coh is replaced by qcoh.
\end{prop}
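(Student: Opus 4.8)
The plan is to verify, for the triple $(O_{h,\mathfrak{g}^{*}},M_l,\chi)$, the three hypotheses of \ref{prop:Reduction-equivalence} — namely that $L_\chi$ is $h$-torsion free, that $p_*(L_\chi)^{M_l}$ is a quantization of a smooth variety, and that $\mathcal{E}xt^{i}_{O_h}(L_\chi[h^{-1}],L_\chi[h^{-1}])=0$ for $i>0$ — then read off the first equivalence from part (2) of that proposition, and obtain the second from affineness of the Slodowy slice. Here the character $\chi\colon U_h(\mathfrak{m}_l)(0)\to\mathbb{C}[[h]]$ is the constant one determined by $\chi\colon\mathfrak{m}_l\to\mathbb{C}$, so $\chi_0=\chi$ and the relevant fibre is $\mu^{-1}(\chi)$, where $\mu\colon\mathfrak{g}^{*}\to\mathfrak{m}_l^{*}$ is the restriction map. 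Since $\mu$ is linear, it is everywhere submersive and $\mu^{-1}(\chi)=\chi+\mathfrak{m}_l^{\perp}$ is already a smooth affine subspace of $\mathfrak{g}^{*}$; by the Gan--Ginzburg description of the slice (see \cite{key-3}) the map $M_l\times S\to\mu^{-1}(\chi)$, $(m,s)\mapsto\mathrm{Ad}^{*}(m)\,s$, is an isomorphism, so $p\colon\mu^{-1}(\chi)\to S$ is a Zariski-locally trivial $M_l$-torsor onto the smooth \emph{affine} variety $S$.

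The heart of the argument is a local normal form at $\mu^{-1}(\chi)$ that replaces the strictly symplectic statement of \ref{lem:Good-Conditions-For-T^*} by a Poisson one. First I would produce, $M_l$-equivariantly and after formal completion at a point $x_0\in\mu^{-1}(\chi)$, a Poisson isomorphism $\mathrm{Spf}(\widehat{O}_{\mathfrak{g}^{*},x_0})\cong\mathrm{Spf}\bigl(\widehat{O}_{T^{*}M_l,e}\,\widehat{\otimes}\,\widehat{O}_{S,p(x_0)}\bigr)$ carrying $\mu$ to the projection $T^{*}M_l\cong M_l\times\mathfrak{m}_l^{*}\to\mathfrak{m}_l^{*}$ and $\mu^{-1}(\chi)$ to the fibre $M_l\times\{\chi\}\times S$; the dimensions match because $\dim T^{*}M_l+\dim S=2\dim\mathfrak{m}_l+\dim\mathfrak{g}^{e}=\dim\mathfrak{g}$. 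Lifting this to the quantization exactly as in the proof of \ref{lem:Good-Conditions-For-T^*} (using the formal structure theory of \cite{key-5} and \cite{key-25}) then gives $\widehat{O}_{h,\mathfrak{g}^{*},x_0}\cong\widehat{D}_{h,M_l,e}\,\widehat{\otimes}\,\widehat{W}_{h,S,p(x_0)}$ with $\widehat{D}_{h,M_l,e}\cong\widehat{W}_h(\mathfrak{m}_l\oplus\mathfrak{m}_l^{*})$ and $\widehat{W}_{h,S}$ a formal quantization of $\widehat{O}_S$. Under this splitting $\widehat{L}_\chi\cong\bigl(\widehat{D}_{h,M_l,e}/\widehat{D}_{h,M_l,e}\!\cdot\!\mathfrak{m}_l\bigr)\,\widehat{\otimes}\,\widehat{W}_{h,S}\cong\widehat{W}_{h,S}$ is evidently $h$-torsion free (so $L_\chi$ is $h$-flat), and its $M_l$-invariants are $\widehat{W}_{h,S}$ with $\widehat{W}_{h,S}/h\cong\widehat{O}_S$; as this holds at every $x_0$, the sheaf $O_{h,S}:=p_*(L_\chi)^{M_l}$ is a quantization of $S$. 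Finally the $\mathcal{E}xt$-vanishing is checked after formal completion, where the $T^{*}M_l$-factor splits off and, just as in \ref{lem:Good-Conditions-For-T^*}, the computation reduces to the triviality of the de Rham cohomology of the affine space $\mathfrak{m}_l^{*}$.

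With the hypotheses in place, the first equivalence is part (2) of \ref{prop:Reduction-equivalence}: the Kazhdan $\mathbb{C}^{*}$-action on $\mathfrak{g}^{*}$ preserves $\mathfrak{m}_l$, hence $\mathfrak{m}_l^{\perp}$, and fixes $\chi$ (because $\bar\rho$ fixes $e$), so it preserves $\mu^{-1}(\chi)$ and descends to the contracting action on $S$, under which $O_{h,S}$ is equivariant, while $M_l\rtimes\mathbb{C}^{*}$ acts via $\gamma$ as in \ref{sec:Hamiltonian-reduction}; the functors $\mathbb{H},\mathbb{H}^{\perp}$ then give $Mod_{\chi}^{M_l\rtimes\mathbb{C}^{*},coh}(O_{h,\mathfrak{g}^{*}}[h^{-1}])\;\tilde{\to}\;Mod^{\mathbb{C}^{*},coh}(O_{h,S}[h^{-1}])$ (the verification above is exactly what licenses applying \ref{prop:Reduction-equivalence} outside the strict cotangent-bundle case, cf.\ the Remark after \ref{lem:Good-Conditions-For-T^*}). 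For the second equivalence, $S$ is affine and $O_{h,S}$ is a quantization, hence $\Gamma$-affine; combining this with \ref{lem:C-star-actions} exactly as in the proof of \ref{thm:Loc-on-Flag}, the functor $\Gamma(-)^{\mathbb{C}^{*}}$ gives $Mod^{\mathbb{C}^{*},coh}(O_{h,S}[h^{-1}])\;\tilde{\to}\;Mod^{f.g.}\bigl(\Gamma(O_{h,S}[h^{-1}])^{\mathbb{C}^{*}}\bigr)$. It then remains to identify the algebra: since $p$ is affine and $M_l$-invariants commute with $\Gamma$, $\Gamma(O_{h,S})=\Gamma(L_\chi)^{M_l}=\bigl(U_h(\mathfrak{g})(0)/U_h(\mathfrak{g})(0)\!\cdot\!\ker\chi\bigr)^{M_l}$, which is $h$-flat with reduction mod $h$ equal to $\bigl(\mathbb{C}[\mathfrak{g}^{*}]/I(\mu^{-1}(\chi))\bigr)^{M_l}=\mathbb{C}[S]$; hence the canonical filtered map $U(\mathfrak{g},e)=(U\mathfrak{g}/U\mathfrak{g}\!\cdot\!\ker\chi)^{M_l}\to\Gamma(O_{h,S}[h^{-1}])^{\mathbb{C}^{*}}$, induced from the Kazhdan copy of $U\mathfrak{g}$ inside $U_h(\mathfrak{g})^{\mathbb{C}^{*}}$, is an isomorphism because it is one on associated graded, both sides being $\mathbb{C}[S]$ by the theorem of \ref{sec:W-algebras-and-quantum}. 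The quasicoherent statements follow by passing to ind-categories.

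The main obstacle is precisely the local normal form of the second paragraph: \ref{lem:Good-Conditions-For-T^*} and its Remark cover quantizations of $T^{*}X$ (or sheaves locally isomorphic to such) with a \emph{symplectic} reduced space, whereas $O_{h,\mathfrak{g}^{*}}$ quantizes the genuinely Poisson, non-symplectic variety $\mathfrak{g}^{*}$ and the transversal $S$ is only Poisson; the work is to establish the equivariant formal splitting $\mathrm{Spf}(\widehat{O}_{\mathfrak{g}^{*}})\cong\mathrm{Spf}(\widehat{O}_{T^{*}M_l}\,\widehat{\otimes}\,\widehat{O}_{S})$ and its quantized counterpart, a modest but essential extension of that lemma. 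Everything downstream of it — the $D$-affineness step, the $\mathbb{C}^{*}$-equivariance bookkeeping of \ref{lem:C-star-actions}, and the associated-graded comparison with $\mathbb{C}[S]$ — is routine.
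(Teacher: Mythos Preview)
Your plan matches the paper's proof almost exactly: verify the three hypotheses of \ref{prop:Reduction-equivalence} by a local computation at points of $\mu^{-1}(\chi)$, invoke part (2) for the first equivalence, and deduce the second from affineness of $S$ together with an associated-graded comparison identifying $\Gamma(O_{h,S}[h^{-1}])^{\mathbb{C}^*}$ with $U(\mathfrak{g},e)$.

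The one substantive difference is in how the local normal form is obtained. You aim for a full tensor splitting $\widehat{O}_{h,\mathfrak{g}^*}\cong\widehat{D}_{h,M_l,e}\,\widehat{\otimes}\,\widehat{W}_{h,S}$ and correctly flag that \cite{key-5} and \cite{key-25} do not literally give this in the Poisson (non-symplectic) setting. The paper avoids this issue entirely: rather than claiming a tensor factorization, it works directly with an explicit PBW basis $\{x_i,y_i,z_i\}$ of $\mathfrak{g}$ where the $x_i$ span $\mathfrak{m}_l$ and the $z_i$ are Killing-dual to $\mathfrak{m}_l$. Elements of the formal completion are then written as series $\sum a_{i,I,J,K}\,h^i x^I y^J z^K$, from which one reads off immediately that the quotient by the ideal generated by the $x_i$ is $h$-torsion free, and that its $M_l$-invariants quantize a formal neighborhood of a point of $S$. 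The Ext vanishing uses that the $x_i$ and $z_i$ are Poisson-dual, which reduces the computation to de Rham cohomology of affine space exactly as you say (the paper cites \cite{key-3}, Proposition~5.1, for this step). So the paper's route is more elementary---pure PBW bookkeeping, no structure theory of quantizations---while your packaging would be conceptually cleaner if the splitting were established, but carries the extra burden you identify. The second equivalence is proved precisely as you outline.
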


We will prove this shortly. Note, however, that, when combined with
the previous result, it gives the Skryabin equivalence 
\begin{cor}
There is an equivalence of categories 
\[
Mod_{\chi}^{M_{l}}(U(\mathfrak{g}))\tilde{\to}Mod(U(\mathfrak{g},e))
\]
 given by $V\to V^{M_{l}}$. It induces an equivalence on the finitely
generated modules on each side.
\end{cor}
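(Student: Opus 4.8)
The plan is to deduce the corollary by composing the three equivalences already in hand. The Lemma immediately above identifies $Mod_{\chi}^{M_{l},f.g.}(U(\mathfrak{g}))$ with $Mod_{\chi}^{M_{l}\rtimes\mathbb{C}^{*},coh}(O_{h,\mathfrak{g}^{*}}[h^{-1}])$, the functor being $V\mapsto M$, where $M$ is the sheafification over $\mathfrak{g}^{*}$ of the unique $\mathbb{C}^{*}$-equivariant $U_{h}(\mathfrak{g})[h^{-1}]$-module $V_{h}$ with $V_{h}^{\mathbb{C}^{*}}=V$; then \ref{prop:Ham-reduction-on-g} supplies equivalences $Mod_{\chi}^{M_{l}\rtimes\mathbb{C}^{*},coh}(O_{h,\mathfrak{g}^{*}}[h^{-1}])\tilde{\to}Mod^{\mathbb{C}^{*},coh}(O_{h,S}[h^{-1}])\tilde{\to}Mod^{f.g.}(U(\mathfrak{g},e))$. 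Their composite is an equivalence $Mod_{\chi}^{M_{l},f.g.}(U(\mathfrak{g}))\tilde{\to}Mod^{f.g.}(U(\mathfrak{g},e))$, and the same argument with ``coherent'' replaced by ``quasicoherent'' and ``finitely generated'' by ``all'' gives the unrestricted statement. So the only real content left is to check that this composite is the functor $V\mapsto V^{M_{l}}$.

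I would verify this by chasing a module through; since $\mathfrak{g}^{*}$ and $S$ are affine, all of it is computed on global sections. The Hamiltonian reduction functor of \ref{prop:Reduction-equivalence} is $\mathbb{H}(M)=p_{*}(\mathcal{H}om_{O_{h,\mathfrak{g}^{*}}[h^{-1}]}(L_{\chi}[h^{-1}],M))^{M_{l}}$, and by the identity recorded just after \ref{def:twist} --- for $h$-torsion-free modules of twist $\chi$ one has $\mathcal{H}om(L_{\chi},-)=(-)^{\mathfrak{m}_{l}}=(-)^{M_{l}}$, using connectedness of $M_{l}$ --- on global sections this is the space $V_{h}^{\mathfrak{m}_{l},\chi}$ of $\chi$-twisted invariants for the \emph{geometric} action of $\mathfrak{m}_{l}$ on $V_{h}$, i.e. the action through the comoment map $x\mapsto x\in U_{h}(\mathfrak{g})$. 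The second equivalence of \ref{prop:Ham-reduction-on-g} is then implemented, exactly as in the cases treated earlier in the paper, by $\mathbb{C}^{*}$-invariant global sections. Hence the composite sends $V$ to $(V_{h}^{\mathfrak{m}_{l},\chi})^{\mathbb{C}^{*}}$, and it remains to identify this with $V^{M_{l}}=V^{\mathfrak{m}_{l},\chi}$, where now $M_{l}$ acts on $V=V_{h}^{\mathbb{C}^{*}}$ through the \emph{renormalized} embedding $\mathfrak{m}_{l}\hookrightarrow U_{h}(\mathfrak{g})^{\mathbb{C}^{*}}$, $x\mapsto h^{-(i+2)/2}x$ for $x\in\mathfrak{g}(i)$, as set up just before \ref{thm:Localization-with-actions}.

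The comparison of these two $\mathfrak{m}_{l}$-actions is the crux. The two embeddings of $\mathfrak{m}_{l}$ into $U_{h}(\mathfrak{g})[h^{-1}]$ differ only by multiplication by an invertible central power of $h$, i.e. by the $\mathbb{C}^{*}$-grading, and the character $\chi$ is concentrated in $\mathbb{C}^{*}$-weight $0$: it vanishes on $l\oplus\bigoplus_{i\le-3}\mathfrak{g}(i)$ and on $\mathfrak{g}(-2)$ the geometric generator $x$ already sits in weight $0$. From this one checks that $V_{h}^{\mathfrak{m}_{l},\chi}$ is a $\mathbb{C}^{*}$-stable subspace of $V_{h}$, and that a vector $w\in V=V_{h}^{\mathbb{C}^{*}}$ satisfies $(x-\chi(x))w=0$ for the geometric action if and only if it does for the renormalized one: when $\chi(x)=0$ both conditions read $xw=0$ (as $h$ is invertible), and on $\mathfrak{g}(-2)$ the two conditions coincide on the nose since $h^{-(i+2)/2}=1$ there. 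Therefore $(V_{h}^{\mathfrak{m}_{l},\chi})^{\mathbb{C}^{*}}=V^{\mathfrak{m}_{l},\chi}=V^{M_{l}}$, so the composite functor is $V\mapsto V^{M_{l}}$; the claim about finitely generated modules is already built into the equivalences being composed.

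I expect the weight-space bookkeeping of the last paragraph to be the only genuine obstacle: since the geometric $\mathfrak{m}_{l}$-action, the renormalized $\mathfrak{m}_{l}$-action, and the $\mathbb{C}^{*}$-action do not pairwise commute, one cannot simply assert ``invariants commute with invariants'', and instead must argue grading-piece by grading-piece, using that $\chi$ lives in exactly the weight where the two copies of $\mathfrak{m}_{l}$ agree. Everything else --- exactness, conservativity, the hom-tensor adjunctions, $h$-torsion-freeness of $L_{\chi}$ --- is inherited verbatim from \ref{prop:Reduction-equivalence}, \ref{prop:Ham-reduction-on-g}, and the preceding Lemma.
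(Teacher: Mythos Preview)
Your proposal is correct and follows precisely the paper's route: the corollary is stated immediately after \ref{prop:Ham-reduction-on-g} with the remark that ``when combined with the previous result, it gives the Skryabin equivalence,'' i.e.\ one simply composes the Lemma with the two equivalences of \ref{prop:Ham-reduction-on-g}. The paper leaves the identification of the composite with $V\mapsto V^{M_l}$ entirely implicit, whereas you carry out the weight-by-weight verification that the geometric and renormalized $\mathfrak{m}_l$-actions impose the same $\chi$-invariance condition on $V=V_h^{\mathbb{C}^*}$ (using that $\chi$ is supported on $\mathfrak{g}(-2)$, exactly where $h^{-(i+2)/2}=1$); this is a genuine and correct addition rather than a different argument.
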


In fact, it is worth noting here that the group $M_{l}$ is unnecessary
in the category on the left: because it is a connected unipotent group,
any module which is locally nilpotent for $\mathfrak{m}_{l}$ (with
respect to character$\chi$) will carry a unique action of $M_{l}$.
However, it does make the statement of the functor cleaner. 
\begin{proof}
(of \ref{prop:Ham-reduction-on-g}) The proof of the first statement
is rather similar in spirit to the proof of \ref{lem:Good-Conditions-For-T^*}.
Although there is no formal Darboux theorem for Poisson varieties,
we do have that the Poisson variety $\mathfrak{g}^{*}$ is an affine
space, and the Poisson structure comes from a linear Poisson bracket.
The moment map $\mathfrak{g}^{*}\to\mathfrak{m}_{l}^{*}$ is simply
the dual to the inclusion, and the space $\mu^{-1}(\chi)$ is isomorphic
to $S\times M_{l}$ as an $M_{l}$-variety. 

Let $x\in\mu^{-1}(\chi)$. By the above discussion the formal completion
of $O_{h,\mathfrak{g}^{*}}$ at the point $x$, denoted $\widehat{O}_{h,\mathfrak{g}^{*}}$,
is isomorphic to the completion of $U_{h}(\mathfrak{g})(0)$ at the
ideal generated by $h$ and $\mathfrak{g}$. We may therefore choose
coordinates, denoted $\{x_{i},y_{i},z_{i}\}$ that are a basis of
$\mathfrak{g}$ such that the $x_{i}$ span $\mathfrak{m}_{l}$, and
the $z_{i}$ correspond to the dual of $\mathfrak{m}_{l}$ under the
Killing form. Elements of $\widehat{O}_{h,\mathfrak{g}^{*}}$ are
then uniquely represented as series 
\[
\sum_{i,I,J,K}a_{i,I,J,K}h^{i}x^{I}y^{J}z^{K}
\]
where $a_{i,I,J,K,}\in\mathbb{C}$ (here $I,J,K$ represent multi-indices).
The image of the comoment map in this formal completion is then simply
the set of series for which each term has a nonzero contribution of
$x^{I}$. Thus the quotient by this ideal is $h$-torsion free, and
the set of $M_{l}$-invariants in this quotient is $h$-torsion free
and quantizes a formal neighborhood of a point in $S$. It follows
that the first two conditions of \ref{prop:Reduction-equivalence}
are satisfied. For the final condition (the Ext vanishing), we have
that the $x_{i}$ and the $z_{i}$ are dual bases with respect to
the Poisson bracket. This allows one to again reduce the question
to the vanishing of de Rham cohomology for affine space (exactly as
in \cite{key-3}, proposition 5.1, or the very similar argument in
\cite{key-25}, lemma 4.14) and the result follows. 

Finally let us give the equivalence 
\[
Mod^{\mathbb{C}^{*},coh}(O_{h,S}[h^{-1}])\tilde{\to}Mod^{f.g.}(U(\mathfrak{g},e))
\]
As $S$ is affine we have 
\[
Mod^{\mathbb{C}^{*},coh}(O_{h,S}[h^{-1}])\tilde{\to}Mod^{\mathbb{C}^{*},f.g.}(\Gamma(O_{h,S}[h^{-1}]))\tilde{\to}Mod^{f.g.}(\Gamma(O_{h,S}[h^{-1}])^{\mathbb{C}^{*}})
\]
and, as the $\mathbb{C}^{*}$-action on $S$ contracts $S$ to the
point $\{e\}$, we have that associated filtration on $\Gamma(O_{h,S}[h^{-1}])^{\mathbb{C}^{*}}$
is concentrated in positive degrees, and the associated graded of
this filtration is therefore isomorphic to $\Gamma(O_{h,S})/h\Gamma(O_{h,S})$.
Applying \ref{lem:Basic-CC-lemma} to $O_{h,S}$ we have that
\[
\Gamma(O_{h,S})/h\Gamma(O_{h,S})\tilde{\to}\Gamma(O_{h,S}/h)=\Gamma(O_{S})
\]
On the other hand, from the definition of $U(\mathfrak{g},e)$ as
a Hamiltonian reduction, there clearly exists a map $U(\mathfrak{g},e)\to\Gamma(O_{h,S}[h^{-1}])$
deduced from the map $U(\mathfrak{g})\to\Gamma(O_{h,\mathfrak{g}^{*}}[h^{-1}])$,
which takes an element $g\in\mathfrak{g}(i)$ to $h^{-(i+2)/2}g$
(as discussed below \ref{lem:This-action-preserves}). This
map has image contained in the $\mathbb{C}^{*}$-fixed locus, and
thus the map $U(\mathfrak{g},e)\to\Gamma(O_{h,S}[h^{-1}])$ actually
lands in $\Gamma(O_{h,S}[h^{-1}])^{\mathbb{C}^{*}}$. Taking associated
graded with respect to the induced filtrations on each side, we obtain
a map 
\[
\text{gr}(U(\mathfrak{g},e))\to\Gamma(O_{S})
\]
which, from the very construction of $U(\mathfrak{g},e)$, must be
the identity map. Therefore the map $U(\mathfrak{g},e)\to\Gamma(O_{h,S}[h^{-1}])^{\mathbb{C}^{*}}$
is an isomorphism, and the result follows. 
\end{proof}
Now that this has been done, we aim describe our localization theorem
for the finite $W$-algebras, and give several proofs of it. We first
need to recall some of the relevant geometry. Let $e\in N$. Letting
$S$ denote the Slodowy slice as above, we have the singular algebraic
variety $S\cap N:=S_{e}$, where $N$ is the nilpotent cone in $\mathfrak{g}$.
Then, if $\mu:T^{*}X\to N$ is the springer resolution, we have that
$\tilde{S}_{e}:=\mu^{-1}(S_{e})\to S_{e}$ is also a resolution of
singularities. In particular, $e$ is a regular value for the map
$\mu$, a crucial fact for our considerations (c.f. \cite{key-16},
chapter 2, for a detailed proof). 

We shall realize this resolution as a Hamiltonian reduction of the
left action of the group $M_{l}$ on the space $T^{*}X$. We think
of $\mu:T^{*}X\to\mathfrak{g}^{*}$ (using the original definition
of $\mu$ as a moment map), and we note that the moment map for $M_{l}$,
$\mu^{'}$, is given by the composition $T^{*}X\to\mathfrak{g}^{*}\to\mathfrak{m}_{l}^{*}$,
where the second map is the restriction of functions. We consider
$\chi\in\mathfrak{m}_{l}^{*}$. Then, using the alternate description
of $T^{*}X$ as an incidence variety, we have $(\mu^{'})^{-1}(\chi)=\{(x,b)\in\mathfrak{g}\times X|x\in b,x\in(\mathfrak{m}_{l}^{\perp}+\chi)\cap N\}$,
where $\mathfrak{m}_{l}^{\perp}$ denotes the annihilator of $\mathfrak{m}_{l}$
in $\mathfrak{g}$ under the Killing form (so this corresponds to
those functionals in $\mathfrak{g}^{*}$ which die on $\mathfrak{m}_{l}$,
the kernel of the restriction map $\mathfrak{g}^{*}\to\mathfrak{m}_{l}^{*}$).
But now, according to \cite{key-3}, we have an isomorphism $M_{l}\times S\to\mathfrak{m}_{l}^{\perp}+\chi$
which is simply the adjoint action $(m,s)\to ad(m)(s)$. Therefore,
under the same map, we have an isomorphism $M_{l}\times(S_{e})\to(\mathfrak{m}_{l}^{\perp}+\chi)\cap N$. 

Now, the action of $M_{l}$ on $T^{*}X$ (thinking of $T^{*}X$ as
an incidence variety), is given as follows: $m(x,b)=(ad(m)(x),mbm^{-1})$.
Further, we write any element of $(\mu^{'})^{-1}(\chi)$ uniquely
as $(ad(m)(y),b)$ (with $y\in S_{e})$, and therefore we have a map
$(\mu^{'})^{-1}(\chi)\to\tilde{S}_{e}$, $(ad(m)(y),b)\to(y,m^{-1}bm)$.
We see immediately that this map is in fact a principal $M_{l}$-bundle,
with $M_{l}\times\tilde{S}_{e}\tilde{\to}(\mu^{'})^{-1}(\chi)$ via
$(m,(y,b))\to(ad(m)(y),mbm^{-1})$. Therefore we have identified $\tilde{S}_{e}$
as a Hamiltonian reduction, and, therefore, a symplectic variety.
We note that by the results in \cite{key-12} (see also \cite{key-16}),
the moment map $\tilde{S}_{e}\to S_{e}$ is a resolution of singularities,
and the base variety $S_{e}$ is normal. 

The next step is to consider the Hamiltonian reduction of differential
operators. We have the action of $M_{l}$ on $T^{*}X$, and we consider
modules with twist $\chi$ as above. As the variety $\tilde{S}_{e}$
is smooth symplectic and $\mu^{-1}(\chi)$ is a a principle $M_{l}$
-bundle over it, we see that the conditions of \ref{lem:Good-Conditions-For-T^*}
(and therefore \ref{prop:Reduction-equivalence}) are satisfied.
We can thus apply Hamiltonian reduction to $D_{h}(\lambda)$ we obtain
a quantization of $\tilde{S}$ which we call $D_{h}(\lambda,\chi)$. 
\begin{thm}
\label{thm:Localization-For-W}For $\lambda$ anti-dominant, we have
equivalences of categories 
\[
Mod^{\mathbb{C}^{*},coh}(D_{h}(\lambda,\chi)\tilde{\to}Mod^{f.g.}(U(\mathfrak{g},e)_{\lambda})
\]
\[
Mod^{\mathbb{C}^{*},qc}(D_{h}(\lambda,\chi)\tilde{\to}Mod(U(\mathfrak{g},e)_{\lambda})
\]
given by $\Gamma^{\mathbb{C}^{*}}$. 
\end{thm}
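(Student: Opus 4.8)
The plan is to obtain the asserted equivalences by splicing together three equivalences already available, exactly in parallel with the proof of Proposition~\ref{prop:Ham-reduction-on-g}: the Hamiltonian-reduction equivalence of Proposition~\ref{prop:Reduction-equivalence}, the $\mathbb{C}^*$-equivariant localization of Theorem~\ref{thm:Localization-with-actions}, and the Skryabin equivalence specialized at the central character $\lambda$. Schematically, for $?\in\{coh,\,qc\}$ I would build
\[
Mod^{\mathbb{C}^*,?}(D_h(\lambda,\chi))\;\xrightarrow{\ \mathbb{H}^{\perp}\ }\;Mod_\chi^{M_l\rtimes\mathbb{C}^*,?}(D_h(\lambda))\;\xrightarrow{\ \Gamma^{\mathbb{C}^*}\ }\;Mod_\chi^{M_l,?}(U(\mathfrak{g})_\lambda)\;\xrightarrow{\ (-)^{M_l}\ }\;Mod^{?}(U(\mathfrak{g},e)_\lambda),
\]
where ``$?$'' means finitely generated on the $U$-sides, and then check that the composite is $\Gamma(\tilde S_e,-)^{\mathbb{C}^*}$.

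For the first arrow I would record that $D_h(\lambda,\chi)$ is by construction the Hamiltonian reduction $p_*(L_\chi)^{M_l}$ of $D_h(\lambda)$ along the $M_l$-action on $T^*X$ at the character $\chi\in\mathfrak{m}_l^*$, where $L_\chi=D_h(\lambda)/\sum_{x\in\mathfrak{m}_l}D_h(\lambda)(\xi_x-\chi(x))$ and $p\colon(\mu')^{-1}(\chi)\to\tilde S_e$. Since we have seen that $(\mu')^{-1}(\chi)\to\tilde S_e$ is a principal $M_l$-bundle with $\tilde S_e$ smooth symplectic and $M_l\rtimes\mathbb{C}^*$ acting on $X=G/B$, Lemma~\ref{lem:Good-Conditions-For-T^*} (together with the Remark following it, since $D_h(\lambda)$ is only Zariski-locally isomorphic to $D_h$) shows the hypotheses of Proposition~\ref{prop:Reduction-equivalence} hold, with the $\mathbb{C}^*$-action pulled back along $\rho(t)=(t^{-2},\gamma(t))$; hence $D_h(\lambda,\chi)$ is a $\mathbb{C}^*$-equivariant quantization of $\tilde S_e$ and part~(2) gives the first arrow, with quasi-inverse $S\mapsto\mathbb{H}(S)=p_*(\mathcal{H}om(L_\chi,S))^{M_l}$. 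The second arrow is Theorem~\ref{thm:Localization-with-actions}. For the third arrow I would observe that the Skryabin equivalence $V\mapsto V^{M_l}$ (the Corollary above) is $Z(U\mathfrak{g})$-linear --- $Z(U\mathfrak{g})=U(\mathfrak{g})^G$ acts on both sides through the canonical map $Z(U\mathfrak{g})\to U(\mathfrak{g},e)$ --- so it restricts to an equivalence $Mod_\chi^{M_l,f.g.}(U(\mathfrak{g})_\lambda)\tilde\to Mod^{f.g.}(U(\mathfrak{g},e)_\lambda)$ on the subcategories killed by $I_\lambda$, and likewise without the finiteness hypothesis.

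It then remains to identify the composite with $\Gamma^{\mathbb{C}^*}$, which has two parts. First one must pin down the target ring, i.e.\ $\Gamma(\tilde S_e,D_h(\lambda,\chi))^{\mathbb{C}^*}\cong U(\mathfrak{g},e)_\lambda$; here I would repeat the associated-graded argument of Proposition~\ref{prop:Ham-reduction-on-g}. Since $S_e$ is normal with rational singularities and $\tilde S_e\to S_e$ is a resolution (see \cite{key-12},\cite{key-16}), we have $H^{>0}(\tilde S_e,\mathcal{O}_{\tilde S_e})=0$ and $\Gamma(\tilde S_e,\mathcal{O}_{\tilde S_e})=\mathbb{C}[S_e]$, so Lemma~\ref{lem:Basic-CC-lemma} applies to the quantization $D_h(\lambda,\chi)$, and working over $\mathbb{C}((h^{1/2}))$ (so that the Kazhdan $\mathbb{C}^*$-weights match correctly) gives $\operatorname{gr}\Gamma(\tilde S_e,D_h(\lambda,\chi))^{\mathbb{C}^*}\cong\mathbb{C}[S_e]$; this agrees with $\operatorname{gr}U(\mathfrak{g},e)_\lambda$ (using $\operatorname{gr}J_\lambda=I(N)$ as in Lemma~\ref{lem:Phi-is-iso}), and the evident filtered map $U(\mathfrak{g},e)_\lambda\to\Gamma(\tilde S_e,D_h(\lambda,\chi))^{\mathbb{C}^*}$ coming from $U(\mathfrak{g})_\lambda\to\Gamma(T^*X,D_h(\lambda))$ by reducing modulo the left ideal and taking $M_l$- then $\mathbb{C}^*$-invariants induces the identity on $\operatorname{gr}$, hence is an isomorphism. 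Second one must match functors: for $M\in Mod^{\mathbb{C}^*,coh}(D_h(\lambda,\chi))$ there should be a natural isomorphism $\bigl(\Gamma^{\mathbb{C}^*}\mathbb{H}^{\perp}(M)\bigr)^{M_l}\cong\Gamma(\tilde S_e,M)^{\mathbb{C}^*}$, which follows because $(\mu')^{-1}(\chi)\to\tilde S_e$ is an $M_l$-torsor and $R^ip_*(L_\chi)=0$ for $i>0$ (deduced from $R^ip_*\mathcal{O}_{(\mu')^{-1}(\chi)}=0$ via the argument of Lemma~2.12 of \cite{key-7}), so that $p_*$ followed by $M_l$-invariants and $\Gamma(\tilde S_e,-)$ compute $\Gamma(T^*X,-)$ followed by $M_l$-invariants, applied to both $D_h(\lambda,\chi)$ and to $\mathbb{H}^{\perp}(M)=L_\chi\otimes p^{-1}M$. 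Granting these, exactness and conservativity of $\Gamma^{\mathbb{C}^*}$ on $Mod^{\mathbb{C}^*}(D_h(\lambda,\chi))$ are inherited from the three constituent functors, and the theorem follows.

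The step I expect to be the main obstacle is this last functorial bookkeeping: carefully threading the three functors and the various higher-cohomology vanishings so that the composite is \emph{literally} $\Gamma^{\mathbb{C}^*}$ on $\tilde S_e$ --- and in particular the fact that $\tilde S_e$ is not affine means that this exactness and conservativity are genuine content that must be imported from the flag-variety localization rather than read off directly. (Alternatively, one can give a self-contained proof over $\tilde S_e$ mirroring Section~\ref{sec:Localization}: twist $D_h(\lambda,\chi)$-modules by line bundles pulled back from $G/B$, which are ample over the affine base $S_e$, and repeat the Serre-vanishing argument of Theorem~\ref{thm:Converativity-for-asymptotic} together with the $\mathbb{C}^*$-invariance argument of Theorem~\ref{thm:Loc-on-Flag}; this route sidesteps the functor-matching but duplicates much of the earlier work.)
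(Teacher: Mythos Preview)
Your proposal is correct and follows essentially the same approach as the paper's (first) proof: chain together the Skryabin equivalence at central character $\lambda$, Theorem~\ref{thm:Localization-with-actions}, and the Hamiltonian-reduction equivalence of Proposition~\ref{prop:Reduction-equivalence} (via Lemma~\ref{lem:Good-Conditions-For-T^*}). The paper's proof is in fact terser and does not spell out the identification of the composite with $\Gamma^{\mathbb{C}^*}$ as carefully as you do; moreover, your parenthetical alternative route---twisting $D_h(\lambda,\chi)$-modules by line bundles ample over $S_e$ and rerunning the Serre-vanishing and $\mathbb{C}^*$-invariance arguments---is precisely the second, more geometric proof the paper develops immediately afterward.
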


\begin{proof}
By the results above, the category $Mod^{f.g.}(U(\mathfrak{g},e)_{\lambda})$
is equivalent to\\
$Mod_{\chi}^{M_{l},f.g.}(U(\mathfrak{g})_{\lambda})$, which, by applying
the localization theorem \ref{thm:Localization-with-actions},
is equivalent to $Mod_{\chi}^{M_{l}\rtimes\mathbb{C}^{*},coh}(D_{h}(\lambda))$.
Applying Hamiltonian reduction (i.e., \ref{prop:Reduction-equivalence}),
we obtain that this category is equivalent to $Mod^{\mathbb{C}^{*},coh}(D_{h}(\lambda,\chi)$.
The first result follows, and the second then follows by taking ind
categories on both sides. 
\end{proof}
In the final part of this chapter, we're going to give another proof
of this theorem, which has a more geometric flavor. In particular,
we shall show that the analogues of several key results in the Beilinson-Bernstein
theory also hold in the $W$-algebra context, leading to a direct
proof of the result. 

We start by considering global sections. We define the algebra $U_{h}(\mathfrak{g},e)(0):=\Gamma(O_{h,S})$,
where $O_{h,S}$ is the quantization of $S$ defined above via Hamiltonian
reduction. Let $U_{h}(\mathfrak{g},e)=U_{h}(\mathfrak{g},e)(0)[h^{-1}]$.
In the course of proving \ref{prop:Ham-reduction-on-g}, we
showed that $U_{h}(\mathfrak{g},e)(0)/h\tilde{\to}\Gamma(O_{S})$
and that $U_{h}(\mathfrak{g},e)^{\mathbb{C}^{*}}=U(\mathfrak{g},e)$. 

Further, for any character $\lambda$ of $\mathfrak{b}$ as considered
in the previous chapter, we had the ideals $J_{\lambda}\subseteq U_{h}(\mathfrak{g})$,
which had the property that $J_{\lambda}(0)/hJ_{\lambda}(0)\tilde{=}I(N)$
($N$ as usual is the nilpotent cone). So we can consider the image
of $J_{\lambda}$ in $U_{h}(\mathfrak{g},e)$, called $B_{\lambda}$,
and we see that $B_{\lambda}(0)/hB_{\lambda}(0)\tilde{=}I(S_{e})$
(this is implied by the fact that $M_{l}\times(S\cap N)\tilde{\to}(\mathfrak{m}_{l}^{\perp}+\chi)\cap N$). 

Now, we have a map $\Psi_{\lambda}:U_{h}(\mathfrak{g},e)\to\Gamma(\tilde{S}_{e},D_{h}(\lambda,\chi))$,
which results from the map $\Phi_{\lambda}$ as both sides are defined
by Hamiltonian reduction. We are now in a situation completely parallel
to that of \ref{lem:Phi-is-iso}, i.e., the technical criterion
of \ref{lem:Basic-CC-lemma} applies on $\tilde{S}_{e}$, and
so we can conclude
\begin{lem}
$\Psi_{\lambda}$ is surjective, and $ker(\Psi_{\lambda})=B_{\lambda}$. 
\end{lem}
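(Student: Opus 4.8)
The plan is to follow the proof of Lemma~\ref{lem:Phi-is-iso} essentially verbatim, working first with the $h$-complete lattices and then inverting $h$. By construction $\Psi_{\lambda}$ is deduced from $\Phi_{\lambda}$ by Hamiltonian reduction, and since $B_{\lambda}$ is the image of $J_{\lambda}=\ker(\Phi_{\lambda})$ it is immediate that $B_{\lambda}(0)$ lies in the kernel of the lattice map $\Psi_{\lambda}(0)\colon U_{h}(\mathfrak{g},e)(0)\to\Gamma(\tilde{S}_{e},D_{h}(\lambda,\chi)(0))$. So it suffices to show that the induced map
\[
\Psi_{\lambda}(0):U_{h}(\mathfrak{g},e)(0)/B_{\lambda}(0)\to\Gamma(\tilde{S}_{e},D_{h}(\lambda,\chi)(0))
\]
is an isomorphism; the assertion for $\Psi_{\lambda}$ then follows from the truncation argument of the remark after Lemma~\ref{lem:Phi-is-iso}, writing $U_{h}(\mathfrak{g},e)=\bigcup_{n}h^{-n}U_{h}(\mathfrak{g},e)(0)$ and likewise for the target.

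To prove this I would reduce both sides modulo $h$. On the source, $U_{h}(\mathfrak{g},e)(0)/h\iso\Gamma(O_{S})$ and $B_{\lambda}(0)/hB_{\lambda}(0)\iso I(S_{e})$ have already been recorded; since $S$ is affine the restriction $\Gamma(O_{S})\to\Gamma(O_{S_{e}})$ is onto with kernel $I(S_{e})$, so $(U_{h}(\mathfrak{g},e)(0)/B_{\lambda}(0))/h\iso\Gamma(O_{S_{e}})$. On the target, $D_{h}(\lambda,\chi)(0)$ is a quantization of $\tilde{S}_{e}$, so its reduction mod $h$ is $O_{\tilde{S}_{e}}$; applying Lemma~\ref{lem:Basic-CC-lemma} with $Z=\tilde{S}_{e}$ gives $\Gamma(\tilde{S}_{e},D_{h}(\lambda,\chi)(0))/h\iso\Gamma(\tilde{S}_{e},O_{\tilde{S}_{e}})$, together with the $h$-flatness and $h$-completeness of the target. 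Under these identifications $\Psi_{\lambda}(0)$ reduces mod $h$ to the pullback map $\Gamma(O_{S_{e}})\to\Gamma(\tilde{S}_{e},O_{\tilde{S}_{e}})$ along the resolution $\tilde{S}_{e}\to S_{e}$, and since $S_{e}$ is normal this map is an isomorphism. This is the exact analogue of the step in Lemma~\ref{lem:Phi-is-iso} where one uses that $\mathrm{S}^{\bullet}(\mathfrak{g})/I(N)\to\Gamma(O_{T^{*}X})$ is an isomorphism.

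The application of Lemma~\ref{lem:Basic-CC-lemma} requires the vanishing $R^{i}\Gamma(\tilde{S}_{e},O_{\tilde{S}_{e}})=0$ for $i>0$, and the isomorphism $\Gamma(O_{S_{e}})\iso\Gamma(\tilde{S}_{e},O_{\tilde{S}_{e}})$ requires $\mu_{*}O_{\tilde{S}_{e}}=O_{S_{e}}$; together these say that $\tilde{S}_{e}\to S_{e}$ is a rational resolution, i.e. $R\mu_{*}O_{\tilde{S}_{e}}=O_{S_{e}}$ (using that $S_{e}$ is affine). This is the one genuinely geometric input and I regard it as the main point: it follows from the corresponding statement for the Springer resolution $T^{*}X\to N$ (the nilpotent cone having rational, indeed symplectic, singularities) together with the transversality of the Slodowy slice, and is part of what is quoted above from \cite{key-12} (see also \cite{key-16}); in the write-up it should be invoked by citation rather than reproved.

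With these inputs the argument closes exactly as in Lemma~\ref{lem:Phi-is-iso}. The source $U_{h}(\mathfrak{g},e)(0)/B_{\lambda}(0)$ is $h$-complete (a quotient of the noetherian, $h$-complete algebra $U_{h}(\mathfrak{g},e)(0)=\Gamma(O_{h,S})$) and visibly $h$-torsion free, and the target is $h$-complete and $h$-torsion free by the previous paragraph; hence both are cohomologically complete $\mathbb{C}[[h]]$-modules, and since $\Psi_{\lambda}(0)$ is an isomorphism mod $h$, the complete Nakayama lemma (\cite{key-8}, corollary 1.5.9) forces it to be an isomorphism. Inverting $h$ by the truncation argument indicated above then gives that $\Psi_{\lambda}$ is surjective and that $\ker(\Psi_{\lambda})=B_{\lambda}$.
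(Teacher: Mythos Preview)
Your proposal is correct and follows essentially the same approach as the paper, which simply asserts that the situation is ``completely parallel to that of \ref{lem:Phi-is-iso}'' and that ``the technical criterion of \ref{lem:Basic-CC-lemma} applies on $\tilde{S}_{e}$''. You have unpacked this parallel in full, and in fact you are more explicit than the paper about the one nontrivial geometric input---the rationality of the resolution $\tilde{S}_{e}\to S_{e}$ (i.e.\ $R\mu_{*}O_{\tilde{S}_{e}}=O_{S_{e}}$)---which is exactly what is needed for \ref{lem:Basic-CC-lemma} to apply and for the mod-$h$ map to be an isomorphism.
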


We now give the necessary modifications of the proof of \ref{thm:Loc-on-Flag}
so that we may obtain another proof of \ref{thm:Localization-For-W}.
Recall that we have equivalences of categories $Mod^{?}(D_{h}(\lambda))\tilde{\to}Mod^{?}(D_{h}(\lambda+\psi))$
(where $?$ stands for coherent or quasicoherent). These equivalences
were obtained by first lifting an $M(0)\in Mod^{?}(D_{h}(\lambda))$
to an element of $Mod_{\lambda}^{?,B}(D_{h}(T^{*}G)$, then twisting
upstairs, and then pushing back down. Since the $B$-action we're
considering on $T^{*}G$ is on the right, if we consider categories
of the form $Mod_{\chi}^{?,M_{l}}(D_{h}(\lambda))$, then as the $M_{l}$
action is on the left, this process gives us equivalences 
\[
Mod_{\chi}^{?,M_{l}}(D_{h}(\lambda))\tilde{\to}Mod_{\chi}^{?,M_{l}}(D_{h}(\lambda+\psi))
\]
 Furthermore, we also have equivalences 
\[
Mod_{\chi}^{?,M_{l}}(D_{h}(\lambda)\tilde{\to}Mod^{?}(D_{h}(\lambda,\chi)
\]
 Combining these, we get equivalences 
\[
Mod^{?}(D_{h}(\lambda,\chi)\tilde{\to}Mod^{?}(D_{h}(\lambda+\psi,\chi)
\]

As before, we call the resulting functor $F_{\psi}$, and we can give
a description of how it acts: if we let $M(0)\in Mod^{?}(D_{h}(\lambda,\chi)(0))$,
then it follows from the definitions that 
\[
F_{\psi}(M(0))/hF_{\psi}(M(0))\tilde{=}M(0)/hM(0)\otimes p_{*}(O_{\psi}|_{\mu^{-1}(\chi)})^{M_{l}}
\]

But we have that the space $\tilde{S}_{e}$ is a subscheme of $\tilde{N}$
as well as a Hamiltonian reduction. So, if we consider $O_{\psi}|_{\tilde{S}_{e}}$,
then we have that $p^{*}(O_{\psi}|_{\tilde{S}_{e}})\tilde{=}O_{\psi}|_{\mu^{-1}(\chi)}$,
since $O_{\psi}$ is an $M_{l}$-equivariant bundle and $\mu^{-1}(\chi)\tilde{=}M_{l}\times\tilde{S}_{e}$
(as explained above). So now it follows that $p_{*}(O_{\psi}|_{\mu^{-1}(\chi)})^{M_{l}}\tilde{=}O_{\psi}|_{\tilde{S}_{e}}$. 

The next step is to define the analogue of the functors $G_{\nu}$.
This is done in the natural way: for any $M(0)\in Mod^{?}(D_{h}(\lambda,\chi))$,
we can consider the pullback to a module $N(0)\in Mod_{\chi}^{?,M_{l}}(D_{h}(\lambda)(0))$;
we then apply the functor $G_{\nu}$ to obtain $G_{\nu}(M(0))\in Mod_{\chi}^{?,M_{l}}(D_{h}(\mathfrak{h})(0))$.
In order to take the reduction of such a module, we first note that
the reduction functor of \ref{prop:Reduction-equivalence} (which
we only defined for an object of a particular $Mod(D_{h}(\lambda)(0))$)
can actually be defined as $S\to p_{*}(\hat{S})$ where $\hat{S}$
is the subsheaf consisting of local sections $m$ of $S$ such that
$\xi_{x}\cdot m=\chi(x)m$ for all $x\in\mathfrak{m}_{l}$. So, this
functor actually makes sense for any object in $Mod(D_{h}(\mathfrak{h})$.
We again call the resulting functor $G_{\nu}$. We note that $G_{\nu}(M(0))$
is now just a sheaf of abelian groups. However, as the functor $G_{\nu}$
(for sheaves on $T^{*}X$) simply amounted to taking a finite direct
sum of copies of the input sheaf, we conclude that the same is true
of the new $G_{\nu}$. So, we obtain: 
\[
G_{\nu}(M(0))\tilde{=}M(0)\otimes L(\nu)|_{\tilde{S}_{e}},
\]
 Finally, since this reduction procedure is (at the very least) an
additive functor on sheaves of abelian groups, we can conclude from
\ref{lem:basic-weight-lemma}
\begin{lem}
Let $\lambda$ be an anti-dominant weight, and $\mu$ a dominant integral
weight, and let $M\in Mod^{qcoh}(D_{h}(\lambda,\chi))$. Then \textup{$G_{\mu}F_{-\mu}(M)$}
has $M$ as a direct summand. Further, let $w_{0}$ denote the longest
element of the Weyl group. Then the sheaf $F_{-\mu}(M)$ is a direct
summand of $G_{-w_{0}\mu}(M)\tilde{=}M\otimes L(-w_{0}\mu)$. 
\end{lem}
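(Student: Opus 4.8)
The plan is to deduce the statement formally from Lemma~\ref{lem:basic-weight-lemma}, by transporting the splittings proved there through the functors that \emph{define} $F_{-\mu}$ and $G_{\nu}$ on $Mod^{qcoh}(D_{h}(\lambda,\chi))$. Recall how those functors were built: given $M$, one first uses the equivalence $Mod_{\chi}^{?,M_{l}}(D_{h}(\lambda)) \iso Mod^{?}(D_{h}(\lambda,\chi))$ (an instance of \ref{prop:Reduction-equivalence}) to lift $M$ to an $M_{l}$-equivariant module $N$ of twist $\chi$ over $D_{h}(\lambda)$; then one applies the corresponding functor on $T^{*}X$ (which for $G_{\nu}$ outputs an object of $Mod_{\chi}^{?,M_{l}}(D_{h}(\mathfrak{h}))$); and finally one applies the reduction functor $S \mapsto p_{*}(\widehat{S})$, where $\widehat{S}$ is the subsheaf of sections killed by $\xi_{x}-\chi(x)$ for all $x\in\mathfrak{m}_{l}$. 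Thus, by construction, $F_{-\mu}(M)$, $G_{\mu}F_{-\mu}(M)$ and $G_{-w_{0}\mu}(M)$ are the reductions of $F_{-\mu}(N)$, $G_{\mu}F_{-\mu}(N)$ and $G_{-w_{0}\mu}(N)$ respectively, while the computation recorded just above identifies the reduction of $N\otimes L(\nu)$ with $M\otimes L(\nu)|_{\tilde{S}_{e}}$.

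First I would apply Lemma~\ref{lem:basic-weight-lemma} to $N \in Mod^{qc}(D_{h}(\lambda))$. This supplies the generalized $Z(\mathfrak{g})$-eigensheaf decomposition $G_{\mu}F_{-\mu}(N)=\bigoplus_{\psi}G_{\mu}F_{-\mu}(N)_{[\psi]}$ with $N\cong G_{\mu}F_{-\mu}(N)_{[\lambda]}$, and the splitting exhibiting $F_{-\mu}(N)$ as a direct summand of $G_{-w_{0}\mu}(N)\cong N\otimes L(-w_{0}\mu)$. The only point requiring a word is that these splittings lie in the equivariant category, so that they survive the reduction step: the idempotent projecting onto the $[\psi]$-summand is built from the action of the \emph{central} subalgebra $Z(\mathfrak{g})$, whose elements are $M_{l}$-invariant, hence this idempotent is $M_{l}$-equivariant; and the weight-filtration splitting of $L(\nu)$ is $M_{l}$-equivariant because $L(\nu)$ carries a $G$-action that restricts to $M_{l}$.

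Next I would apply the reduction functor to these relations. It is additive on sheaves of abelian groups --- being the composite of passage to the subsheaf $\widehat{S}$ (a kernel, hence additive) with the additive functor $p_{*}$ --- so it sends split idempotents to split idempotents, i.e.\ preserves direct summands. Using in addition that reduction commutes with $F_{-\mu}$ and $G_{\nu}$ (by the very definition of these functors on $D_{h}(\lambda,\chi)$-modules) and with $\otimes L(\nu)$ (via the identification of the previous paragraph), I conclude that $M$ is a direct summand of $G_{\mu}F_{-\mu}(M)\cong F_{-\mu}(M)\otimes L(\mu)|_{\tilde{S}_{e}}$, and that $F_{-\mu}(M)$ is a direct summand of $G_{-w_{0}\mu}(M)\cong M\otimes L(-w_{0}\mu)|_{\tilde{S}_{e}}$, as claimed.

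The one genuinely delicate spot --- though still routine --- is the bookkeeping of the first paragraph: verifying that the composite functors $G_{\mu}F_{-\mu}$ and $G_{-w_{0}\mu}$ on $Mod^{qcoh}(D_{h}(\lambda,\chi))$ really do factor as ``lift, apply upstairs, reduce'', and that reduction is compatible with all the operations involved. Once this is unwound, no new geometry is needed: the assertion is a formal consequence of Lemma~\ref{lem:basic-weight-lemma}, the additivity of the reduction functor, and the already-established isomorphism $p_{*}(O_{\psi}|_{\mu^{-1}(\chi)})^{M_{l}}\cong O_{\psi}|_{\tilde{S}_{e}}$.
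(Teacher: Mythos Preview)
Your proposal is correct and follows essentially the same approach as the paper: deduce the statement from Lemma~\ref{lem:basic-weight-lemma} by lifting $M$ to an $M_{l}$-equivariant module on $T^{*}X$, applying that lemma there, and then pushing the resulting splittings through the (additive) reduction functor. The paper's argument is the one-line observation that reduction is additive on sheaves of abelian groups, while you have spelled out the bookkeeping (equivariance of the central idempotents and compatibility of reduction with the various operations) that makes this formal deduction work.
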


Now, given an anti-dominant weight $\psi$, $O_{\psi}$ is an ample
line bundle on $\tilde{N}$ (with respect to the base scheme $N$).
Therefore, its restriction to $\tilde{S}_{e}$ is ample with respect
to $S_{e}$. So we see that we have all the ingredients that gave
us the proof of \ref{thm:Loc-on-Flag} (i.e., the proof that
we gave followed formally from the above lemmas and general facts
about quantized sheaves of algebras). Thus, we can conclude: 
\begin{thm}
Let $\lambda$ be an anti-dominant weight. Then 
\[
\Gamma:Mod^{qc}(D_{h}(\lambda,\chi))\to Mod^{qc}(U_{h}(\mathfrak{g},e)/B_{\lambda})
\]
is an equivalence of categories. Further, $\Gamma$ takes coherent
$D_{h}(\lambda,\chi)$ modules to finitely generated $U_{h}(\mathfrak{g},e)$
modules, and we have that 
\[
\Gamma:Mod^{coh}(D_{h}(\lambda,\chi))\to Mod^{coh}(U_{h}(\mathfrak{g},e)/B_{\lambda})
\]
is an equivalence of categories as well. 
\end{thm}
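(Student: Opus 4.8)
The plan is to run the Beilinson--Bernstein argument of Theorem~\ref{thm:Loc-on-Flag} (more precisely, its $h$-linear incarnation, Theorem~\ref{thm:Converativity-for-asymptotic}) over the base $\tilde{S}_e$, since all the geometric inputs have just been assembled. By the lemma identifying $\Gamma(\tilde{S}_e, D_h(\lambda,\chi))$ with $U_h(\mathfrak{g},e)/B_\lambda$, the functor $\Gamma$ indeed lands in $Mod(U_h(\mathfrak{g},e)/B_\lambda)$, and its candidate inverse is the localization functor $M \mapsto D_h(\lambda,\chi)\otimes_{U_h(\mathfrak{g},e)/B_\lambda} M$, where the left side is regarded as a sheaf on $\tilde{S}_e$ via the global-sections identification. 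Once $\Gamma$ is shown to be exact and conservative on $Mod^{qc}(D_h(\lambda,\chi))$, the $D_h(\lambda,\chi)$-affineness of $\tilde{S}_e$ is automatic and the equivalence follows by the standard formal argument (c.f.\ \cite{key-1}, Proposition~1.4.4). So the whole theorem reduces to exactness, conservativity, and the finiteness statement.

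For exactness I would argue as in Theorem~\ref{thm:Converativity-for-asymptotic}: cohomology commutes with filtered colimits on the noetherian space $\tilde{S}_e$, so one reduces to a coherent $M$ with lattice $M(0)$, whence $M(0)/hM(0)$ is a coherent sheaf on $\tilde{S}_e$. Since $\tilde{S}_e \to S_e$ is a projective morphism onto the affine, normal variety $S_e$, and for $\mu$ dominant the line bundle $O_{-\mu}|_{\tilde{S}_e}$ is ample relative to $S_e$ (this is exactly the ampleness recalled above, pulled back along the embedding $\tilde{S}_e \hookrightarrow \tilde{N}$), Serre vanishing yields $\mu \gg 0$ with $H^i(\tilde{S}_e, M(0)/hM(0)\otimes O_{-\mu}) = 0$ for $i>0$. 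Identifying this sheaf with $F_{-\mu}(M(0))/hF_{-\mu}(M(0))$ and applying \ref{lem:basic-vanishing} gives $H^i(\tilde{S}_e, F_{-\mu}(M(0))) = 0$, hence $H^i(\tilde{S}_e, F_{-\mu}(M)) = 0$; then the direct-summand lemma just stated, exhibiting $M$ as a summand of $F_{-\mu}(M)\otimes L(\mu)$ with $H^i(\tilde{S}_e, F_{-\mu}(M)\otimes L(\mu)) = H^i(\tilde{S}_e, F_{-\mu}(M))\otimes L(\mu) = 0$, forces $H^i(\tilde{S}_e, M) = 0$ for $i>0$.

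For conservativity I would copy the proof of \ref{thm:Converativity-for-asymptotic} verbatim: if $\Gamma(\tilde{S}_e, M) = 0$ with $M = N(0)[h^{-1}]$, pass to the quotient $N(0)$ having no local sections killed by a power of $h$, choose $\mu$ dominant with $\Gamma(\tilde{S}_e, F_{-\mu}(N(0))/hF_{-\mu}(N(0))) \neq 0$ (Serre again, using $N(0)/hN(0) \neq 0$ by Nakayama), lift via \ref{lem:basic-vanishing} to $\Gamma(\tilde{S}_e, F_{-\mu}(N(0))) \neq 0$, which injects into $\Gamma(\tilde{S}_e, F_{-\mu}(N))$; but $F_{-\mu}(N)$ is a summand of $G_{-w_0\mu}(N) \cong N\otimes L(-w_0\mu)|_{\tilde{S}_e}$, so $\Gamma(\tilde{S}_e, F_{-\mu}(N))$ injects into $\Gamma(\tilde{S}_e, N)\otimes L(-w_0\mu) = 0$, a contradiction. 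Finally, for the coherent statement, exactness plus conservativity give (by the usual argument) that every coherent $M$ is generated by its global sections; covering $\tilde{S}_e$ by finitely many affine opens on which $D_h(\lambda,\chi)$ has noetherian sections and $M$ is finitely generated, one extracts finitely many global generators, so $\Gamma(M)$ is finitely generated over $U_h(\mathfrak{g},e)/B_\lambda$ --- here using that $U_h(\mathfrak{g},e)(0) = \Gamma(O_{h,S})$ is noetherian since $S$ is affine. The load-bearing point, beyond bookkeeping, is the availability of Serre's theory over the affine base: namely that $\tilde{S}_e$ is genuinely projective over $S_e$ with $O_{-\mu}|_{\tilde{S}_e}$ ample relative to it, which rests on the normality of $S_e$ and the structure of the resolution $\tilde{S}_e \to S_e$ recalled above; granting this, the rest is a faithful transcription of the flag-variety argument.
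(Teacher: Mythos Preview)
Your proposal is correct and follows exactly the approach the paper takes: the paper's own argument is simply that ``we have all the ingredients that gave us the proof of \ref{thm:Loc-on-Flag}\ldots\ Thus, we can conclude'' the theorem, and you have faithfully unpacked that sentence, running the exactness/conservativity argument of \ref{thm:Converativity-for-asymptotic} over $\tilde{S}_e$ using the twist functors $F_\psi$, $G_\nu$, the relative ampleness of $O_{-\mu}|_{\tilde{S}_e}$ over the affine base $S_e$, and the direct-summand lemma just established.
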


Of course, this theorem is not really what we want. To put things
in their final form, we need to consider a $\mathbb{C}^{*}$-action
on the category of modules. Fortunately, we have that the Hamiltonian
reduction procedure respects the Gan-Ginzburg $\mathbb{C}^{*}$-action
on $D_{h}(\lambda)$: the ideal $I_{\chi}$ is clearly $\mathbb{C}^{*}$-invariant,
and the process of taking $M_{l}$-invariants respects the $\mathbb{C}^{*}$-action
because of the commutation relations between $M_{l}$ and $\mathbb{C}^{*}$.
Therefore, $D_{h}(\lambda,\chi)$ is $\mathbb{C}^{*}$-equivariant
with respect to the $\mathbb{C}^{*}$ action on $\tilde{S}_{e}$. 

This will allow us to identify the $\mathbb{C}^{*}$- invariant global
sections of $D_{h}(\lambda,\chi)$ as follows: we have seen above
that $U_{h}(\mathfrak{g},e)$ carries a natural $\mathbb{C}^{*}$
action with respect to which 
\[
(U_{h}(\mathfrak{g},e))^{\mathbb{C}^{*}}=U(\mathfrak{g},e)
\]
We also concluded above that $J_{\lambda}\cap U_{h}(\mathfrak{g})^{\mathbb{C}^{*}}$
was the ideal generated by the classical ideal $I_{\lambda}$. So
it follows that $B_{\lambda}\cap(U_{h}(\mathfrak{g},e))^{\mathbb{C}^{*}}$
is the image of this ideal in $U(\mathfrak{g},e)$. But we have an
identification of the center of $U(\mathfrak{g},e)$ with the center
of $U(\mathfrak{g})$ (the natural map $Z(\mathfrak{g})\to Z(\mathfrak{g},e)$
is an isomorphism, see \cite{key-14} section 5, footnote 2). So in
fact we can conclude that $\Gamma(\tilde{S}_{e},D_{h}(\lambda,\chi)^{\mathbb{C}^{*}}\tilde{=}U(\mathfrak{g},e)/I_{\lambda}:=U(\mathfrak{g},e)_{\lambda}$.
Thus, after taking into account the $\mathbb{C}^{*}$-actions, the
previous theorem immediately implies another proof of \ref{thm:Localization-For-W}.

\end{document}